\theoremstyle{plain}   \newtheorem{theorem}{Theorem}[section]
\theoremstyle{plain}   \newtheorem{proposition}[theorem]{Proposition}
\theoremstyle{plain}   \newtheorem{corollary}[theorem]{Corollary}
\theoremstyle{plain}   \newtheorem{lemma}[theorem]{Lemma}
\theoremstyle{definition}   
\theoremstyle{definition}   \newtheorem{example}[theorem]{Example}
\newcommand{\nset}{\mathbb{N}}
\newcommand{\aset}{\mathcal{A}}
\newcommand{\anset}{\mathcal{A}_n}
\newcommand{\anmset}{\mathcal{A}_{n-1}}
\newcommand*{\psymb}{\mathrm{P}}
\newcommand*{\plac}{{\mathsf{plac}}}
\newcommand*{\hypo}{{\mathsf{hypo}}}
\newcommand*{\hypon}{{\mathsf{hypo}_n}}
\newcommand*{\sylv}{{\mathsf{sylv}}}
\newcommand*{\sylvn}{{\mathsf{sylv}_n}}
\newcommand*{\sylvnm}{{\mathsf{sylv}_{n-1}}}
\newcommand*{\sylvnp}{{\mathsf{sylv}_{n+1}}}
\newcommand*{\sylvh}{{\mathsf{sylv}^{\#}}}
\newcommand*{\sylvhn}{{\mathsf{sylv}^{\#}_n}}
\newcommand*{\baxt}{{\mathsf{baxt}}}
\newcommand*{\baxtn}{{\mathsf{baxt}_n}}
\newcommand\chyph{\penalty 10000-\hskip 0pt\relax}
\tikzset{
  pretableaumatrix/.style={
    ampersand replacement=\&,
    matrix of math nodes,
    outer sep=1mm,
    inner sep=0mm,
    anchor=center,
    row sep={between borders,-\pgflinewidth},
    column sep={between borders,-\pgflinewidth},
    dottedentry/.style={densely dotted},
    spaceentry/.style={draw=none,execute at begin node=\null},
  },
  pretableaunode/.style={
    font=\small,
    draw=gray,
    sharp corners,
    rectangle,
    anchor=base,
    text height=3.75mm,
    text depth=1.25mm,
    minimum height=5mm,
    minimum width=5mm,
    inner sep=0mm,
    outer sep=0mm,
  },
  tableaumatrix/.style={
    pretableaumatrix,
    every node/.append style={
      pretableaunode,
    },
  },
  medtableaumatrix/.style={
    pretableaumatrix,
    every node/.append style={
      pretableaunode,
      font=\footnotesize,
      text height=2.75mm,
      text depth=.75mm,
      minimum height=3.5mm,
      minimum width=3.5mm
    },
  },
  smalltableaumatrix/.style={
    pretableaumatrix,
    every node/.append style={
      pretableaunode,
      font=\scriptsize,
      text height=1.85mm,
      text depth=.15mm,
      minimum height=2.5mm,
      minimum width=2.5mm,
    },
  },
  tinytableaumatrix/.style={
    pretableaumatrix,
    every node/.append style={
      pretableaunode,
      font=\tiny,
      text height=1.25mm,
      text depth=.15mm,
      minimum height=1.75mm,
      minimum width=1.75mm
    },
  },
  tableau/.style={
    baseline=-1.25mm,
    every matrix/.style={tableaumatrix},
  },
  medtableau/.style={
    baseline=-1.25mm,
    every matrix/.style={medtableaumatrix},
  },
  smalltableau/.style={
    baseline=-1.25mm,
    every matrix/.style={smalltableaumatrix},
  },
  preshapetableaumatrix/.style={
    pretableaumatrix,
    execute at end cell={\strut},
    every node/.append style={
      draw=black,
      anchor=base,
      inner sep=0mm,
      outer sep=0mm,
    },
    shadedentry/.style={fill=gray},
    darkshadedentry/.style={fill=darkgray},
  },
  medshapetableaumatrix/.style={
    preshapetableaumatrix,
    every node/.append style={
      text height=2.75mm,
      text depth=.75mm,
      minimum height=3.5mm,
      minimum width=3.5mm
    },
  },
  shapetableaumatrix/.style={
    ampersand replacement=\&,
    matrix of math nodes,
    outer sep=0mm,
    inner sep=0mm,
    anchor=base,
    row sep={between borders,-\pgflinewidth},
    column sep={between borders,-\pgflinewidth},
    execute at begin cell={\strut},
    every node/.append style={draw,anchor=base,text height=1mm,text depth=.5mm,minimum size=1.5mm,inner sep=0mm,outer sep=0mm},
  },
  shapetableau/.style={
    every matrix/.style={shapetableaumatrix},
  },
  topalign/.style={
    every matrix/.append style={name=maintableau,anchor=maintableau-1-1.base},
    baseline,
  },
}
\tikzset{
  bst/.style={
    standard/.style={
      font=\small,
      draw=gray,
      rounded rectangle,
      minimum width=4.5mm,
      minimum height=4.5mm,
      inner xsep=0mm,
      inner ysep=1mm,
      outer sep=0mm,
      line width=.5pt,
    },
    empty/.style={
      minimum width=3mm,
      minimum height=3mm,
    },
    triangle/.style={
      isosceles triangle,
      isosceles triangle apex angle=60,
      shape border rotate=90,
      rounded corners=2mm,
      minimum width=8mm,
      inner xsep=0mm,
      inner ysep=.5mm
    },
    blank/.style={
      draw=none,
    },
    nodecount/.style={
      blank,
      font=\scriptsize,
    },
    every node/.style={standard},
    every child/.style={draw=black,line width=.6pt},
    level distance=10mm,
    level 1/.style={sibling distance=60mm},
    level 2/.style={sibling distance=30mm},
    level 3/.style={sibling distance=15mm},
  },
  medbst/.style={
    bst,
    level distance=10mm,
    level 1/.style={sibling distance=15mm},
    level 2/.style={sibling distance=15mm},
    level 3/.style={sibling distance=15mm},
  },
  smallbst/.style={
    bst,
    level distance=8mm,
    level 1/.style={sibling distance=10mm},
    level 2/.style={sibling distance=10mm},
    level 3/.style={sibling distance=10mm},
  },
  tinybst/.style={
    bst,
    level distance=5mm,
    level 1/.style={sibling distance=8mm},
    level 2/.style={sibling distance=8mm},
    level 3/.style={sibling distance=8mm},
    every node/.append style={
      font=\footnotesize,
    },
    triangle/.append style={
      rounded corners=1mm,
      minimum width=7mm,
      inner xsep=-.5mm,
    },
  },
  microbst/.style={
    bst,
    standard/.append style={
      font=\scriptsize,
      minimum width=3mm,
      minimum height=3mm,
      inner ysep=.25mm,
    },
    level distance=3mm,
    level 1/.style={sibling distance=6mm},
    level 2/.style={sibling distance=6mm},
    level 3/.style={sibling distance=6mm},
  },
  nanobst/.style={
    bst,
    standard/.append style={
      font=\tiny,
      minimum width=2mm,
      minimum height=2mm,
      inner ysep=.25mm,
    },
    level distance=2mm,
    level 1/.style={sibling distance=4mm},
    level 2/.style={sibling distance=4mm},
    level 3/.style={sibling distance=4mm},
  },
}
\begin{document}

    \title{Identities and bases in the sylvester and Baxter monoids}

    \author{Alan J. Cain}
    \address{
        Centro de Matem\'{a}tica e Aplica\c{c}\~{o}es\\
        Faculdade de Ci\^{e}ncias e Tecnologia\\
        Universidade Nova de Lisboa\\
        2829--516 Caparica\\
        Portugal
    }
    \email{a.cain@fct.unl.pt}

    \author{Ant\'{o}nio Malheiro}
    \address{
        Departamento de Matem\'{a}tica \& Centro de Matem\'{a}tica e Aplica\c{c}\~{o}es\\
        Faculdade de Ci\^{e}ncias e Tecnologia\\
        Universidade Nova de Lisboa\\
        2829--516 Caparica\\
        Portugal
    }
    \email{ajm@fct.unl.pt}

    \author{Duarte Ribeiro}
    \address{
        Departamento de Matem\'{a}tica \& Centro de Matem\'{a}tica e Aplica\c{c}\~{o}es\\
        Faculdade de Ci\^{e}ncias e Tecnologia\\
        Universidade Nova de Lisboa\\
        2829--516 Caparica\\
        Portugal
    }
    \email{dc.ribeiro@campus.fct.unl.pt}

    \thanks{
        This work is funded by National Funds through the FCT -- Funda\c{c}\~{a}o para a Ci\^{e}ncia e a Tecnologia, I.P., under the scope of the project {\scshape UIDB}/00297/2020 (Center for Mathematics and Applications) and the project {\scshape PTDC}/{\scshape MAT-PUR}/31174/2017.\\
        The third author is funded by National Funds through the FCT -- Funda\c{c}\~{a}o para a Ci\^{e}ncia e a Tecnologia, I.P., under the scope of the studentship {\scshape SFRH}/{\scshape BD}/138949/2018.
    }
    
    \subjclass[2020]{Primary 08B05; Secondary 05E99, 20M05, 20M07, 20M32}
    
    \keywords{Sylvester monoid, Baxter monoid, variety, identities, equational basis, axiomatic rank}
    
    \begin{abstract}
        This paper presents new results on the identities satisfied by the sylvester and Baxter monoids. We show how to embed these monoids, of any rank strictly greater than $2$, into a direct product of copies of the corresponding monoid of rank $2$. This confirms that all monoids of the same family, of rank greater than or equal to $2$, satisfy exactly the same identities. We then give a complete characterization of those identities, and prove that the varieties generated by the sylvester and the Baxter monoids have finite axiomatic rank, by giving a finite basis for them. 
    \end{abstract}
    
	\maketitle

	
	\section{Introduction}
    \label{section:introduction}
	
	
	When studying the identities satisfied by a given semigroup $S$, if $S$ indeed satisfies a non-trivial identity, two natural questions arise: The first is the finite basis problem, that is, are the identities satisfied by $S$ consequences of those in some finite subset (see \cite{sapir_combinatorial,volkov_finitebasis}). There exist several powerful methods with which to approach the problem for finite semigroups, however, such is not the case for infinite semigroups. The second question is the computational complexity of the identity checking problem $\textsc{Check-Id}(S)$ \cite{kharlampovich_sapir_survey}, that is, the decision problem whose instance is an arbitrary identity $u \approx v$, and the answer to such an instance is `YES' if $S$ satisfies $u \approx v$, and `NO' if it does not. It is well-known that, for any finite semigroup $S$, the problem $\textsc{Check-Id}(S)$ is decidable, since there are only finitely many substitutions of the variables occurring in the identity by elements of $S$. Furthermore, $\textsc{Check-Id}(S)$ belongs in the complexity class \textbf{co-NP}. However, in the case of infinite semigroups, the brute-force approach used in the finite case does not work, and only recently there have been results on the computational complexity of identity checking for infinite semigroups, beyond undecidability and trivial or easy decidability in linear time (see \cite{daviaud_identities,volkov_kauffman3,volkov_kauffman4}).
	
    The plactic monoid $\plac$ \cite{LS1978}, whose elements can be identified with Young tableaux, has long been considered an important monoid, due to its numerous applications in different areas of mathematics, such as algebraic combinatorics \cite{lothaire_2002}, representation theory \cite{fulton_young,green2006polynomial}, symmetric functions \cite{Schutzenberger1977d,macdonald_symmetric}, Kostka-Foulkes polynomials \cite{lascoux_foulkes}, and crystal bases \cite{bump_crystalbases}. By its definition via Schensted's insertion algorithm \cite{Schensted1961}, the plactic monoid has decidable word problem. The question of identities satisfied by the plactic monoid is actively studied \cite{kubat_identities,izhakian_tropical}, since it is an infinite monoid with a powerful combinatorial structure. Cain \textit{et al} \cite{ckkmo_placticidentity} showed that the plactic monoid of finite rank $n$ does not satisfy any non-trivial identity of length less than or equal to $n$, thus showing that there is no single ``global'' identity satisfied by every plactic monoid of finite rank. On the other hand, Johnson and Kambites \cite{johnson_kambites_tropical_plactic} gave a tropical representation of the plactic monoid of every finite rank, thus showing that they all satisfy non-trivial identities. Furthermore, in conjunction with results given in \cite{daviaud_identities} and \cite{johnson_tran}, this shows that the identity checking problem in the plactic monoids of finite rank is in the complexity class $\mathbf{P}$. Daviaud \textit{et al} \cite{daviaud_identities} also show that the monoid of $2 \times 2$ upper-triangular tropical matrices (see, for example, \cite{maclagan_tropical}), the bicyclic monoid, and the plactic monoid of rank $2$ satisfy exactly the same identities. Since the bicyclic monoid is not finitely based \cite{shneerson_axiomaticrank}, none of these monoids are. 
    
    In the context of combinatorial Hopf algebras, whose bases are indexed by combinatorial objects, the plactic monoid is used to construct the Hopf algebra of free symmetric functions \textbf{FSym} \cite{poirier_reutenauer,duchamp2002noncommutative}, whose bases are indexed by standard Young tableaux. In this context, other monoids arise with similar combinatorial properties to the plactic monoid: the Hopf algebra \textbf{Sym} of non-commutative symmetric functions \cite{gelfandetal_noncommutative_symmetric_functions}, whose bases are indexed by integer compositions, is obtained from the hypoplactic monoid $\hypo$ \cite{Krob1997,novelli_hypoplactic}, the monoid of quasi-ribbon tableaux; the Loday–Ronco Hopf algebra \textbf{PBT} \cite{loday_ronco_hopf_planar_binary_trees,hivert_sylvester}, whose bases are indexed by planar binary trees, is obtained from the sylvester monoid $\sylv$ \cite{hivert_sylvester}, the monoid of right strict binary search trees; the Baxter Hopf algebra \textbf{Baxter} \cite{reading_lattice_hopf,giraudo_baxter2}, whose bases are indexed by Baxter permutations \cite{baxter_glen}, is obtained from the Baxter monoid $\baxt$ \cite{giraudo_baxter2}, the monoid of pairs of twin binary search trees. These monoids satisfy identities, and the shortest identities have been characterized \cite{cm_identities}. Unlike in the case of the plactic monoid, these identities are independent of rank, except for the case of rank $1$.

    The identities satisfied by the hypoplactic monoid have been studied in depth by the present authors in \cite{cmr_hypo_id}. It was shown that the hypoplactic monoids of rank greater than or equal to $2$ all satisfy exactly the same identities, which have been fully characterized. Furthermore, a finite basis was given for the variety generated by $\hypo$, thus proving that it has finite axiomatic rank. Although not stated in the article, the characterization of the identities implies that $\textsc{Check-Id}(\hypo)$ is in the complexity class $\mathbf{P}$. These results were obtained by extensively using an alternate characterization of the hypoplactic monoid using inversions, which arises as a consequence of \cite[Subsection~4.2]{novelli_hypoplactic}.
    
    This paper focuses on the sylvester and Baxter monoids, as well as the \#-sylvester monoid, whose elements are identified with left strict binary search trees and whose properties can be derived from those of the sylvester monoid by parallel reasoning. These monoids are closely related to each other  (see \cite[Proposition~3.7]{giraudo_baxter2}), as well as to the hypoplactic monoid (see \cite{priez_binary_trees,cm_sylv_crystal}). The main goal of the paper is to present a systematic study of the identities satisfied by these monoids, in the same way as the one given for the hypoplactic monoid in \cite{cmr_hypo_id}. This paper also gives an alternate characterization of these monoids, by introducing the concepts of right and left precedences, which serve the same purpose as inversions for the hypoplactic monoid. The authors of this paper remark that Theorem~\ref{theorem:sylv_finite_basis} has been proven independently in the upcoming paper \cite{kambites_johnson_cain_malheiro_preprint}.


	\section{Preliminaries and notation}
	\label{section:preliminaries}

	
	This section gives the necessary background on universal algebra (see \cite{bs_universal_algebra,mal2012algebraic,mckenzie2018algebras,Bergman_universal_algebra}), in the context of monoids, followed by the definition and essential facts about the sylvester and Baxter monoids.
	
	For the necessary background on semigroups and monoids, see \cite{howie1995fundamentals}; for presentations, see \cite{higgins1992techniques}; for computational complexity, see \cite{papadimitriou1994computational}; for a general background on the plactic monoid, see \cite[Chapter~5]{lothaire_2002}, and on the hypoplactic monoid, see \cite{novelli_hypoplactic} and \cite{cm_hypo_crystal}. 
	

    \subsection{Varieties, identities and bases}
    \label{subsection:varieties_identities_and_bases}
    
    The background given in this subsection is mostly identical to the one given in \cite[Subsection~2.1]{cmr_hypo_id}, of which this paper is a sequel. As such, we omit most of the subsection, with the following exceptions:
    
    We define the content and support of a balanced identity as the content and support of both sides of the identity, respectively.

    An equational theory $\Sigma$ is \textit{left 1-hereditary} if, for every identity $u \approx v$ of $\Sigma$ and any variable $x \in supp(u \approx v)$, the identity $u' \approx v'$ is in $\Sigma$, where $u_1$ (respectively, $v_1$) is the longest prefix of $u$ (respectively, $v$) where $x$ does not occur (see \cite{pastijn_polyhedral,volkov_finitebasis,mashevitzky_hereditary}). We define \textit{right 1-hereditary} equational theories in a dual way. The equational theory of the variety generated by the bicyclic monoid, which coincides with that of the variety generated by the plactic monoid of rank 2 (see \cite{johnson_kambites_tropical_plactic}), is both left and right 1-hereditary (see \cite{shneerson_axiomaticrank,pastijn_polyhedral}). 
    
    For a given semigroup $S$, its \textit{identity checking problem} $\textsc{Check-Id}(S)$ is the following combinatorial decision problem: the instance is an arbitrary identity $u \approx v$; the answer to such an instance is `YES', if $S$ satisfies the identity, and `NO' otherwise. Notice that the semigroup itself is fixed, as such, it is only the identity $u \approx v$ that serves as the input. Therefore, the time/space complexity of $\textsc{Check-Id}(S)$ should be measured only in terms of the size of the identity. Naturally, the problem can also be considered for monoids.
        
	
	\subsection{The sylvester and \#-sylvester monoids}
    \label{subsection:sylv_and_sylv_hash}
    
    This subsection gives a brief overview of the sylvester and \#-sylvester monoids and their related combinatorial objects and insertion algorithms, as well as results from \cite{cm_identities}. We introduce an alternative characterization of these monoids, analogous to the one given in \cite{novelli_hypoplactic} for the hypoplactic monoid, as well as some new notation. For more information on the sylvester monoid, see \cite{hivert_sylvester} and \cite{cm_sylv_crystal}; for more information on binary search trees, see \cite{knuth1970} and \cite{aho_ullman_foundations}.
    
    Let $\aset = \{ 1 < 2 < 3 < \cdots \}$ denote the set of positive integers, viewed as an infinite ordered alphabet, and let $\anset = \{ 1 < \cdots < n \}$ denote the set of the first $n$ positive integers, viewed as a finite ordered alphabet. For brevity, we will write `the node $a$' instead of `the node labelled with $a$'.

    A \textit{right strict binary search tree} is a labelled rooted binary tree where the label of each node is greater than or equal to the label of every node in its left subtree, and strictly less than every node in its right subtree. A \textit{left strict binary search tree} is a labelled rooted binary tree where the label of each node is strictly greater than the label of every node in its left subtree, and less than or equal to every node in its right subtree. The following are examples of, respectively, right and left strict binary search trees:
    \begin{multicols}{2}
        \begin{equation}
        \label{example:rsbst}
            \begin{tikzpicture}[tinybst,baseline=-9mm]
                \node {$4$}
                child { node {$2$}
                    child { node {$1$}
                        child { node {$1$} }
                        child[missing]
                    }
                    child { node {$4$} }
                }
                child[missing]
                child { node {$5$}
                    child { node {$5$}
                        child { node {$5$} }
                        child[missing]
                    }
                    child { node {$6$}
                        child[missing]
                        child { node {$7$} }
                    }
                };
            \end{tikzpicture}
        \end{equation}
        \begin{equation}
        \label{example:lsbst}
            \begin{tikzpicture}[tinybst,baseline=-9mm]
                \node {$5$}
                child { node {$4$}
                    child { node {$1$}
                        child[missing]
                        child { node {$1$}
                            child[missing]
                            child { node {$2$} }
                        }
                    }
                    child{ node {$4$} }
                }
                child[missing]
                child { node {$5$}
                    child[missing]
                    child { node {$7$}
                        child { node {$6$}
                            child { node {$5$} }
                            child[missing]
                        }
                    child[missing]
                    }
                };
            \end{tikzpicture}.
        \end{equation}
    \end{multicols}
    
    The (left-to-right) \textit{postfix} (or \textit{postorder}) traversal of a labelled rooted binary search tree $T$ is the sequence of nodes obtained by recursively performing the postfix traversal of the left subtree of the root of $T$, then recursively performing the postfix traversal of the right subtree of the root of $T$, and then adding the root of $T$ to the sequence. The (left-to-right) \textit{postfix reading} of a labelled rooted binary search tree $T$ is the word $\mathrm{Post}(T)$ obtained by listing the labels of the nodes visited during the postfix traversal. For example, the postfix reading of the tree given in Example \ref{example:rsbst} is $1142557654$. 
    
    The (left-to-right) \textit{prefix} (or \textit{preorder}) traversal of a labelled rooted binary search tree $T$ is the sequence of nodes obtained by first adding the root of $T$ to the sequence, then recursively performing the prefix traversal of the left subtree of the root of $T$, and then recursively performing the prefix traversal of the right subtree of the root of $T$. The (left-to-right) \textit{prefix reading} of a labelled rooted binary search tree $T$ is the word $\mathrm{Pre}(T)$ obtained by listing the labels of the nodes visited during the prefix traversal. For example, the prefix reading of the tree given in Example \ref{example:lsbst} is $5411245765$.
    
    The \textit{infix} (or \textit{inorder}) traversal of a labelled rooted binary search tree $T$ is the sequence of nodes obtained by recursively performing the infix traversal of the left subtree of the root of $T$, then adding the root of $T$ to the sequence, and then recursively performing the infix traversal of the right subtree of the root of $T$. The following result is immediate from the definitions of right and left strict binary search trees:
    
    \begin{proposition}
        For any right or left strict binary search tree $T$, if a node $a$ is encountered before a node $b$ in an infix traversal, then $a \leq b$. 
    \end{proposition}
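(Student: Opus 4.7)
The plan is to prove this by structural induction on the binary search tree $T$, using the defining inequality satisfied at each node. The observation that unifies both the right strict and left strict cases is that, in either type of tree, every label in the left subtree of a node is $\leq$ the label of that node, and every label in the right subtree is $\geq$ the label of that node (this is just the weaker of the two inequalities in each definition). So it suffices to prove the statement using only this weak monotonicity condition.

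For the base case, if $T$ has no nodes or only a single node the claim is vacuous or trivial. For the inductive step, let $r$ denote the root of $T$, with left subtree $L$ and right subtree $R$. The infix traversal of $T$ lists the infix traversal of $L$, then $r$, then the infix traversal of $R$. Given two nodes $a$ and $b$ with $a$ visited before $b$, I would split into the four cases according to where $a$ and $b$ lie:
\begin{enumerate}[(i)]
\item both in $L$: apply the induction hypothesis to $L$;
\item $a \in L$ and $b = r$, or $a \in L$ and $b \in R$, or $a = r$ and $b \in R$: use $\mathrm{label}(a) \leq r \leq \mathrm{label}(b)$ coming from the defining inequality applied at $r$ (and, in the mixed subcases, the fact that every label in $L$ is $\leq r$ and every label in $R$ is $\geq r$);
\item both in $R$: apply the induction hypothesis to $R$.
\end{enumerate}
In each case the desired inequality $a \leq b$ follows, completing the induction.

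There is no real obstacle here; the proof is a direct unwinding of the recursive definition of both the tree and the infix traversal. The only point requiring a little care is to note that the proposition is uniform in the two flavours of search tree precisely because the weak inequalities (``$\leq$ root'' on the left, ``$\geq$ root'' on the right) hold in both definitions, even though the two definitions differ in where the strict inequality is placed.
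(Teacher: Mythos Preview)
Your proof is correct. The paper does not actually give a proof of this proposition; it simply states that the result is immediate from the definitions of right and left strict binary search trees. Your structural induction is exactly the natural way to unpack that ``immediate'' claim, and your observation that only the weak inequalities (left subtree $\leq$ root $\leq$ right subtree) are needed nicely explains why the statement holds uniformly for both flavours of tree.
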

    
    In other words, the infix traversal visits nodes in weakly increasing order, in right or left strict binary search trees.
    
    Let $T$ be a right or left strict binary search tree, and let $a \in \aset$ be a symbol which labels a node of $T$. We say that a node $a$ is \textit{topmost} if all other nodes $a$ are its descendants. The following lemma, and its consequences, will be used thoroughly in this section:
    
    \begin{lemma}[{\cite[Lemma~6.6]{cm_cyclic_shifts}}]
    \label{lemma:bst_unique_path}
        Every node $a$ appears on a single path descending from the root to a leaf; thus, there is a unique topmost node $a$.
    \end{lemma}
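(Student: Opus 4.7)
My plan is to argue by considering the lowest common ancestor of two nodes with the same label. I treat the right strict case; the left strict case follows by the obvious dual argument (swap the strict/non-strict inequalities).

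Fix a label $a \in \aset$ and suppose, for contradiction, that $T$ contains two distinct nodes $u, v$ both labelled $a$ with neither an ancestor of the other. Let $w$ be their lowest common ancestor, with label $b$. By assumption $w \notin \{u,v\}$, so $u$ and $v$ lie in different subtrees of $w$, say $u$ in the left subtree and $v$ in the right. The defining property of a right strict binary search tree then forces $a \leq b$ (since $u$ lies in the left subtree of $w$) and $a > b$ (since $v$ lies in the right subtree of $w$), a contradiction. Hence any two nodes labelled $a$ are comparable under the descendant relation, so the set of $a$-labelled nodes forms a totally ordered chain under this relation.

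A totally ordered chain in a rooted tree lies on a single root-to-descendant path, which can be extended downward to a leaf arbitrarily (by repeatedly choosing a child whenever one exists). Thus all nodes labelled $a$ appear on a single path descending from the root to a leaf. The unique maximal element of the chain with respect to ``being an ancestor of'' is the topmost $a$, and by definition all other $a$-nodes are among its descendants, giving uniqueness.

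The only real content is the LCA step; the extension from a chain to a root-to-leaf path and the identification of the topmost node are essentially formal, so I do not expect a significant obstacle. One small point worth being explicit about, if the proof is spelled out in full, is that the argument uses the \emph{strict} inequality on exactly one side (the right subtree in the right strict case, the left subtree in the left strict case) to rule out the split $u$-left / $v$-right (respectively $v$-left / $u$-right), which is precisely why the statement requires a strict binary search tree and not, for example, a tree with only weak inequalities on both sides.
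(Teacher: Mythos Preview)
Your argument is correct: the lowest-common-ancestor contradiction is exactly the right idea, and you have correctly identified that the strict inequality on one side is what makes it work.

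There is, however, nothing to compare against in the paper itself. The lemma is not proved there; it is simply stated with a citation to \cite[Lemma~6.6]{cm_cyclic_shifts} and used as a black box. So your proof supplies an argument that the paper omits rather than duplicating or diverging from one it gives.
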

    
    Let $T$ be a right or left strict binary search tree, and consider two nodes of $T$, respectively labelled with $a$ and $b$, for some $a,b \in \aset$. Consider the lowest common ancestor of nodes $a$ and $b$, labelled with $c$, for some $c \in \aset$. If the node $a$ is in the left subtree of the node $c$ or coincides with it, and the node $b$ is in the right subtree of the node $c$ or coincides with it, and the nodes $a$ and $b$ do not both coincide with the node $c$, then we say that the node $a$ is to \textit{the left} of the node $b$, and the node $b$ is to \textit{the right} of the node $a$. It is immediate to see that a node $a$ is to the left of a node $b$ if and only if the infix traversal visits the node $a$ before the node $b$, hence $a$ is less than or equal to $b$. Furthermore, if we consider a subset of nodes of $T$, the definition of \textit{leftmost} and \textit{rightmost} nodes follows naturally.

    
        
    The following algorithm allows us to insert a letter from $\aset$ into an existing right strict binary search tree, as a leaf node in the unique place that maintains the property of being a right strict binary search tree:
        
    \begin{algorithm}[h]
        \DontPrintSemicolon
        \KwIn{A right strict binary search tree $T$ and a symbol $a \in \aset$.}     
        \KwOut{A right strict binary search tree $T \leftarrow a$.}
        \BlankLine
        \eIf{$T$ is empty,}{create a node and label it $a$;}{examine the label $x$ of the root node; if $a > x$, recursively insert $a$ into the right subtree of the root node; otherwise recursively insert $a$ into the left subtree of the root node;}
        \Return the resulting tree.
        \caption{\textit{Right strict leaf insertion}.}
        \label{alg_rsbst}
    \end{algorithm}

    Let $u \in \aset^*$. Using the insertion algorithm above, we can compute a unique right strict binary search tree $\psymb_{\sylv} (u)$ from $u$: we start with the empty tree and insert the letters of $u$, one-by-one from right-to-left. Notice that, for any right strict binary search tree $T$, we have that $\psymb_{\sylv}\left( \mathrm{Post}(T) \right) = T$, that is, the right strict insertion algorithm, with the postfix reading of $T$ as input, gives back $T$. As such, any right strict binary search tree can be seen as an output of the right strict insertion algorithm.
    
    We define the relation $\equiv_{\sylv}$ on $\aset^*$ as follows: For $u,v \in \aset^*$,
    \begin{center}
        $u \equiv_{\sylv} v \iff \psymb_{\sylv}(u) = \psymb_{\sylv}(v)$.
    \end{center}
    This relation is a congruence on $\aset^*$, called the \textit{sylvester congruence}. The factor monoid $\aset^*/{\equiv_{\sylv}}$ is the infinite-rank \textit{sylvester monoid}, denoted by $\sylv$. The congruence $\equiv_{\sylv}$ naturally restricts to a congruence on $\anset^*$, and the factor monoid $\anset^*/{\equiv_{\sylv}}$ is the \textit{sylvester monoid of rank} $n$, denoted by $\sylvn$.
    
    It follows from the definition of $\equiv_{\sylv}$ that each element $[u]_{\sylv}$ of $\sylv$ can be identified with the combinatorial object $\psymb_{\sylv} (u)$. As such, for each right strict binary search tree $T$, the set of words $u \in \aset^*$ such that $\psymb_{\sylv}(u) = T$ is called the \textit{sylvester class} of $T$, and the postfix reading of $T$ is called the \textit{canonical word} of the sylvester class of $T$.
    
    Recall that the content of $u$ describes the number of occurrences of each letter of $\aset$ in $u$. It is immediate from the definition of the sylvester monoid that if $u \equiv_{\sylv} v$, then $c(u) = c(v)$. Thus, we can define the content of an element of $\sylv$ as the content of any word which represents it. Furthermore, since $c(u) = c(v)$ implies that $supp(u) = supp(v)$, we can also define the support of an element of $\sylv$ as the support of any word which represents it. We define the content and support of a right strict binary search tree as the content and support of its corresponding sylvester class.
    
    Notice that $\sylv_n$ is a submonoid of $\sylv$, for each $n \in \nset$, and, for $n,m \in \nset$, if $n \leq m$, then $\sylv_n$ is a submonoid of $\sylv_m$.
	
    The sylvester monoid can also be defined by the presentation $\left\langle \aset \mid \mathcal{R}_{\sylv} \right\rangle$, where
    \begin{align*}
        \mathcal{R}_{\sylv} =& \left\{ (caub,acub): a \leq b < c, u \in \aset^* \right\}.
    \end{align*}
    These defining relations are known as the \textit{sylvester relations}. A presentation for the sylvester monoid of rank $n$, for some $n \in \nset$, can be obtained by restricting generators and relations of the above presentation to generators in $\anset$.
    
	We now give an alternative characterization of the sylvester monoid, inspired by the characterization of the hypoplactic monoid using inversions (see \cite{novelli_hypoplactic,cmr_hypo_id}). Let $u \in \aset^*$ and let $a,b \in supp(u)$ be such that $a < b$. We say $u$ has a $b$-$a$ \textit{right precedence} if, when reading $u$ from right to left, $b$ occurs before the first occurrence of $a$ and, for any $c \in supp(u)$ such that $a < c < b$, $c$ does not occur before the first occurrence of $a$. The number of occurrences of $b$ before the first occurrence of $a$ is the \textit{index} of the right precedence.
    
    Notice that, by the definition of a right precedence, for any given $a \in supp(u)$, there is at most one $b \in supp(u)$ such that $u$ has a $b$-$a$ right precedence (of index $k$, for some $k \in \nset$). On the other hand, $u$ can have several right precedences of the form $b$-$x$, for a fixed $b$. 
    
    \begin{example}
        The word $3123$ has a $2$-$1$ and a $3$-$2$ right precedence, both of index $1$, however, it does not have a $3$-$1$ right precedence, since $2$ occurs before the first occurrence of $1$; the word $2313$ has a $3$-$1$ right precedence of index $1$ and a $3$-$2$ right precedence of index $2$; and the word $3132$ has a $2$-$1$ right precedence of index $1$, and does not have a $3$-$1$ right precedence.
    \end{example}
    
    In order to prove that the sylvester monoid can be characterized using only the content and right precedences of words, we require some lemmata:
    
    
    
    \begin{lemma}
    \label{lemma:right_precedence_ancestor_nodes}
        Let $u \in \aset^*$, and let $a,b \in supp \left( u \right)$ be such that $a<b$ and $b$ occurs at least $k$ times in $u$, for some $k \in \nset$. Then, $u$ has a $b$-$a$ right precedence of index $k$ if and only if the topmost node $a$ in $\psymb_{\sylv} (u)$ has exactly $k$ ancestor nodes labelled with $b$, and no ancestor nodes labelled with $c$, for any $c \in supp \left( \psymb_{\sylv} (u) \right)$ such that $a<c<b$.
    \end{lemma}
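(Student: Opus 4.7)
The plan is to decompose $u = u_L\, a\, u_R$, where the displayed $a$ is the rightmost occurrence of the letter $a$ in $u$, so that $u_R$ contains no $a$ and the condition ``$u$ has a $b$-$a$ right precedence of index $k$'' translates into: $u_R$ contains exactly $k$ letters equal to $b$ and no letter $c$ with $a<c<b$. Because $\psymb_{\sylv}$ performs right-to-left leaf insertion, the rightmost $a$ is the first $a$ to be inserted, and the right strict BST property forces every subsequently inserted $a$ through it; hence this first-inserted $a$ is the unique topmost node $a$ of $\psymb_{\sylv}(u)$ predicted by Lemma~\ref{lemma:bst_unique_path}. Setting $T_R := \psymb_{\sylv}(u_R)$, the ancestors of the topmost $a$ in $\psymb_{\sylv}(u)$ are precisely the nodes of $T_R$ lying on the insertion path of $a$ into $T_R$, since later leaf insertions cannot create new ancestors of nodes already in the tree.

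The key technical ingredient is a short auxiliary observation: in any right strict binary search tree $T$, the insertion path of a fresh letter $b$ visits every node of $T$ already labelled $b$. Indeed, for any such existing node $b^*$, the left/right decisions along the root-to-$b^*$ path are, by the right strict BST property, forced to match the decisions that the insertion algorithm makes when inserting a new $b$ into $T$. Granting this, the forward direction is immediate: if $u_R$ contains no $a$ and no letter $c$ with $a<c<b$, then every node $x$ of $T_R$ satisfies $x \geq a \iff x \geq b$, so the insertion paths of $a$ and of $b$ into $T_R$ take the same branch at every node and therefore coincide; the auxiliary observation then ensures that this common path visits all $k$ nodes of $T_R$ labelled $b$, while it contains no node labelled $c$ with $a<c<b$ simply because $T_R$ has none.

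For the converse, suppose the topmost $a$ has exactly $k$ ancestors labelled $b$ and none labelled $c$ with $a<c<b$; equivalently, the insertion path of $a$ into $T_R$ has these two properties. The main obstacle is to exclude the possibility that some $c'$ with $a<c'<b$ hides in $T_R$ off this insertion path. Should such a $c'$ exist, let $y$ be the lowest ancestor of $c'$ lying on the insertion path of $a$. A short two-case analysis at $y$ delivers a contradiction: if the path turns left at $y$, then $y\geq a$, and since $T_R$ has no $a$'s we get $a<y<c'<b$, placing the forbidden $c=y$ on the path itself; if it turns right, then $y<a$, whereas $c'\leq y$ would force $c'<a$, contradicting $c'>a$. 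Hence $T_R$ contains no letter in $(a,b)$ at all, and the forward-direction argument applies a second time to identify the insertion paths of $a$ and $b$ in $T_R$, showing that every $b$-labelled node of $T_R$ lies on that path. Counting then yields exactly $k$ occurrences of $b$ in $u_R$ and no letter $c$ with $a<c<b$, which is precisely the $b$-$a$ right precedence of index $k$.
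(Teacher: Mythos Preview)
Your argument is correct and follows essentially the same route as the paper: both identify the topmost node $a$ with the rightmost occurrence of $a$ in $u$, and both exploit that, in the absence of letters strictly between $a$ and $b$ in the suffix $u_R$, the insertion of $a$ into $T_R=\psymb_{\sylv}(u_R)$ makes exactly the same left/right decisions as the insertion of a fresh $b$ would, and hence passes through all $k$ existing nodes labelled $b$. Your treatment of the converse is considerably more explicit than the paper's, which dispatches that direction by a brief contrapositive remark; your case analysis at the node $y$ (the lowest ancestor of a hypothetical off-path $c'$ lying on the insertion path of $a$) cleanly rules out hidden letters in $(a,b)$ and then lets you reuse the forward-direction path identification to count the $b$'s exactly.
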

    
    \begin{proof}
        Suppose that $u$ has a $b$-$a$ right precedence of index $k$. It is clear that, as a consequence of the insertion algorithm~\ref{alg_rsbst}, the topmost node $a$ in $\psymb_{\sylv} (u)$ corresponds to the rightmost occurrence of $a$ in $u$. Thus, there are exactly $k$ symbols $b$ inserted before $a$, when computing $\psymb_{\sylv} (u)$. Notice that, by Lemma~\ref{lemma:bst_unique_path}, the corresponding nodes must be in a single path from the root to any leaf of $\psymb_{\sylv} (u)$. Furthermore, since no symbol $c$ occurs before the rightmost symbol $a$, when reading $u$ from right-to-left, for any $a < c < b$, we have that the rightmost symbol $a$ must be inserted as a left child of a node $b$, in particular, the node corresponding to the $k$-th inserted symbol $b$. This is due to the fact that, during the ``searching'' step of the insertion algorithm, the symbol $a$ will satisfy exactly the same criteria as the last inserted symbol $b$, except when checking the $k$-th node $b$. Thus, the topmost node $a$ in $\psymb_{\sylv} (u)$ must have exactly $k$ ancestor nodes labelled with $b$, and no ancestor nodes labelled with $c$.
        
        On the other hand, suppose $u'$ is such that $u \equiv_{\sylv} u'$. Suppose, in order to obtain a contradiction, that $u'$ does not have a $b$-$a$ right precedence of index $k$. Then, by the previous part of this proof, we have that $\psymb_{\sylv} (u') \neq \psymb_{\sylv} (u)$, which contradicts our hypothesis.
    \end{proof}
    
    It is clear, from the previous lemma, that all words in the same sylvester class must share exactly the same right precedences. However, it is not immediate that two words which share the same content and right precedences will produce exactly the same output, when considered as inputs for the insertion algorithm.
    
    \begin{lemma}
    \label{lemma:right_precedence_left_child}
        Let $T$ be a right strict binary search tree, and let $a,b \in supp(T)$, with $a$ strictly less than $b$, be such that the topmost node $a$ is a left child of a node $b$. Then, all words in the sylvester class of $T$ have a $b$-$a$ right precedence of index $k$, where $k$ is the number of ancestor nodes of the topmost node $a$ labelled with $b$.
    \end{lemma}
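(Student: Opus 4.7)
The plan is to reduce the statement to Lemma~\ref{lemma:right_precedence_ancestor_nodes}. That lemma already characterizes, in terms of the ancestor structure of the topmost node $a$ in $\psymb_{\sylv}(u)$, when an arbitrary word $u$ has a $b$-$a$ right precedence of a prescribed index. Since every word in the sylvester class of $T$ satisfies $\psymb_{\sylv}(u) = T$, the ancestor structure in question is the same for all such $u$, namely the ancestor structure of the topmost $a$ in $T$ itself. The hypothesis directly gives one of the two required conditions (exactly $k$ ancestors labelled $b$), so the whole argument reduces to showing that the topmost node $a$ of $T$ has no ancestor labelled $c$ for any $c \in \aset$ with $a<c<b$.

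First I would fix notation: let $N_b$ denote the parent of the topmost node $a$, which is labelled $b$ by hypothesis, and suppose for contradiction that some ancestor $N_c$ of the topmost $a$ is labelled $c$ with $a<c<b$. The strategy is to derive a contradiction by locating the topmost $a$ simultaneously in the left and right subtrees of $N_c$.

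The key step is a double application of the right-strict BST property at $N_c$. On the one hand, since $a<c$ and the topmost $a$ is a descendant of $N_c$, it must lie in the left subtree of $N_c$ (labels there being weakly less than $c$). On the other hand, $c \neq b$ forces $N_c \neq N_b$, so $N_c$ is a proper ancestor of $N_b$; and since $b>c$, $N_b$ must lie in the right subtree of $N_c$ (labels there being strictly greater than $c$). But then the topmost $a$, being a descendant of $N_b$, lies in the right subtree of $N_c$ as well, contradicting the previous conclusion.

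This rules out any such $c$, so both conditions of Lemma~\ref{lemma:right_precedence_ancestor_nodes} hold for the topmost node $a$ of $T$, and the conclusion about the $b$-$a$ right precedence of index $k$ follows for every word in the sylvester class of $T$. I do not expect any step to be a serious obstacle; the only point requiring care is to check that $N_b \neq N_c$ so that the right-strict BST dichotomy at $N_c$ can be applied to $N_b$ rather than to $N_c$ itself.
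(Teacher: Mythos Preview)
Your proposal is correct and follows essentially the same approach as the paper's proof: both reduce to Lemma~\ref{lemma:right_precedence_ancestor_nodes} after ruling out any ancestor labelled $c$ with $a<c<b$, and both obtain the contradiction from the right-strict BST property at that ancestor. The only cosmetic difference is that the paper argues $N_b$ lies in the left subtree of $N_c$ (since $N_a$ does and $N_b$ is its parent) and then contradicts $b>c$, whereas you argue $N_b$ lies in the right subtree (from $b>c$) and then contradict the location of $N_a$; these are the same contradiction read in opposite directions.
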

    
    \begin{proof}
        If the topmost node $a$ had an ancestor node labelled with $c$, for some $a < c < b$, then the node $a$ would be in the left subtree of a node $c$, but even more so, since the node $a$ is a left child of the node $b$, then the node $b$ would also be in the left subtree of a node $c$, which contradicts the hypothesis that $c$ is strictly less than $b$. The result follows from Lemma~\ref{lemma:right_precedence_ancestor_nodes}.
    \end{proof}
    
    More generally, we can also see that if a topmost node $a$ is in the left subtree of some node $b$, then the words in the sylvester class of $T$ will have a $c$-$a$ right precedence, for some $c \in supp(T)$ such that $a < c \leq b$.
    
    Suppose $u,v \in \aset^*$ are words which share the same content, but $\psymb_{\sylv} (u) \neq \psymb_{\sylv} (v)$. It is immediate that at least more than one different symbol must occur in $u$ and $v$. Furthermore, if the roots are labelled differently, it is clear that the words in the sylvester class of the tree whose root label is higher will have a right precedence concerning the root of the other tree, while the words in the sylvester class of the tree whose root label is lower will not. Thus, we will consider that, up to a certain depth, the two trees $\psymb_{\sylv} (u)$ and $\psymb_{\sylv} (v)$ are identical.
    
    Let $T_{u,v}$ be the right strict binary search tree obtained by removing all nodes of decreasing depth in $\psymb_{\sylv} (u)$ and $\psymb_{\sylv} (v)$ until we obtain the same tree. 
    
    \begin{lemma}
    \label{lemma:existence_of_child_node}
        Any node of $\psymb_{\sylv} (u)$ and $\psymb_{\sylv} (v)$, corresponding to a leaf of highest depth in $T_{u,v}$, has a left (respectively, right) child in $\psymb_{\sylv} (u)$ if and only if it has a left (respectively, right) child in $\psymb_{\sylv} (v)$.
    \end{lemma}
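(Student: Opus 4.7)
The plan is to observe that, in any right strict binary search tree produced by Algorithm~\ref{alg_rsbst}, the subtree rooted at a given position is completely determined by the multiset of letters from the source word whose values lie in a certain interval, the interval being read off from the path from the root to that position. I would first fix a leaf $x$ of highest depth in $T_{u,v}$. Because $T_{u,v}$ is a common top of both $\psymb_{\sylv}(u)$ and $\psymb_{\sylv}(v)$, the sequence of labels $r_0, r_1, \ldots, r_d$ along the path from the root to $x$ (with $r_d$ the label of $x$) and the sequence of left/right turns taken at each $r_i$ is identical in the two trees.

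The next step is to extract the admissibility interval. Since Algorithm~\ref{alg_rsbst} sends an inserted letter $\alpha$ to the left subtree of $r_i$ exactly when $\alpha \leq r_i$, and to the right subtree exactly when $\alpha > r_i$, a letter $\alpha$ descends through each $r_i$ and reaches $x$'s position if and only if $L < \alpha \leq U$, where
\[
L = \max\bigl(\{r_i : \text{the path turns right at } r_i\} \cup \{0\}\bigr), \qquad U = \min\bigl(\{r_i : \text{the path turns left at } r_i\} \cup \{\infty\}\bigr).
\]
Since letters are inserted right-to-left and the first letter of $u$ to reach $x$'s position must become $x$, the label $r_d$ must equal the value of the rightmost letter of $u$ lying in $(L, U]$, and equally the value of the rightmost such letter of $v$. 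After this letter is placed, the remaining letters of $u$ with values in $(L, r_d]$ build the left subtree of $x$ in $\psymb_{\sylv}(u)$, and those in $(r_d, U]$ build the right subtree; the analogous statement holds for $\psymb_{\sylv}(v)$ with $v$ in place of $u$.

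From here the conclusion is quick: $x$ admits a left child in $\psymb_{\sylv}(u)$ precisely when $u$ contains at least two letters with values in $(L, r_d]$, and a right child precisely when $u$ contains at least one letter with value in $(r_d, U]$, and the same dichotomy governs $\psymb_{\sylv}(v)$. Because $u$ and $v$ share the same content, their multiplicities of letters in each of the two intervals $(L, r_d]$ and $(r_d, U]$ coincide, which yields the claimed biconditionals for left and right children of $x$.

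The main obstacle I anticipate is verifying the interval characterisation of subtrees carefully, in particular that placing the rightmost letter in $(L, U]$ first really does produce a node with exactly the label $r_d$ in both trees, and that no earlier letters perturb the common path. This reduces to a straightforward induction along the common path in $T_{u,v}$, with Lemma~\ref{lemma:bst_unique_path} ruling out any ambiguity in the location of topmost nodes. Once this is in place, the content equality between $u$ and $v$ finishes the argument at once, without appeal to right precedences.
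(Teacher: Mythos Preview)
Your argument is correct and takes a genuinely different route from the paper's. The paper argues by contradiction: it assumes a leaf $a$ of highest depth in $T_{u,v}$ has a left child in $\psymb_{\sylv}(u)$ but not in $\psymb_{\sylv}(v)$, then locates the rightmost deepest leaf $c$ of $T_{u,v}$ to the left of $a$ and their lowest common ancestor $d$, and compares infix traversals of the two trees to force all nodes with the child's label $b$ to be descendants of $c$ in $\psymb_{\sylv}(v)$, yielding the contradiction $b \leq d < b$. Your approach instead exploits the interval structure of binary search trees directly: the multiset of labels populating the subtree at any fixed position is exactly the multiset of letters of the source word lying in the interval $(L,U]$ cut out by the path, so the \emph{sizes} of the left and right subtrees at $x$ are functions of the content alone. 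This is cleaner and more conceptual; it avoids the auxiliary nodes $c$ and $d$ and the infix-traversal bookkeeping entirely.

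One remark on wording: your opening sentence claims the subtree at a position is ``completely determined by the multiset of letters'' in the interval. Strictly this is false, since the \emph{shape} of that subtree depends on the right-to-left order in which those letters are inserted, not merely on their multiset. What is true, and what your argument actually uses, is that the multiset of labels occurring in that subtree (hence its size, and hence whether each of its immediate subtrees is empty) is determined by the multiset of letters in the interval. You should tighten the opening claim accordingly; the rest of the argument already relies only on this weaker statement and goes through unchanged.
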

    
    \begin{proof}
        Suppose, in order to obtain a contradiction, that there is a node in $\psymb_{\sylv} (u)$ and $\psymb_{\sylv} (v)$, corresponding to a leaf of highest depth in $T_{u,v}$ and labelled with $a$, for some $a \in supp\left(T_{u,v}\right)$, which has a child node in $\psymb_{\sylv} (u)$ that does not occur in $\psymb_{\sylv} (v)$. Assume, without loss of generality, that it is a left child. Notice that the node $a$ cannot be the only leaf of highest depth in $T_{u,v}$. Furthermore, this node cannot correspond to the leftmost leaf of highest depth in $T_{u,v}$, otherwise, this would imply that $\psymb_{\sylv} (u)$ has at least one node labelled with a symbol which either cannot occur in $\psymb_{\sylv} (v)$, or occurs more times in $\psymb_{\sylv} (u)$ than in $\psymb_{\sylv} (v)$, which contradicts our hypothesis that $u$ and $v$ share the same content.
        
        Let $b$ be the label of the left child of the node $a$, in $\psymb_{\sylv} (u)$; let $c$ be the label of the node corresponding to the rightmost leaf of highest depth in $T_{u,v}$ to the left of the node $a$; let $d$ be the label of the lowest common ancestor of the nodes $a$ and $c$. Notice that the nodes $a$ and $b$ are in the right subtree of the node $d$, hence $c \leq d < b \leq a$. On the other hand, the infix traversal, in $\psymb_{\sylv} (u)$, first visits the node $c$, then the node $d$, then the node $b$, and immediately afterwards the node $a$. Since $u$ and $v$ have the same content, the infix traversal, in $\psymb_{\sylv} (v)$, must also first visit the node $c$, then all nodes labelled with $b$, and then the node $a$. As such, since in $\psymb_{\sylv} (v)$, the node $a$ does not have a left child, and the node $c$ corresponds to the rightmost leaf of highest depth in $T_{u,v}$ to the left of the node $a$, all nodes labelled with $b$ must be descendants of the node $c$. But this implies that, in $\psymb_{\sylv} (v)$, all nodes labelled with $b$ are in the left subtree of the node $d$, hence $b \leq d$, which contradicts our hypothesis.
    \end{proof}
    
    This lemma shows that two different right strict binary search trees are identical up to a certain depth, and the first difference between the trees, when searching in depth, is a difference between the labels of some nodes. Now, we are ready to prove the right precedence characterization of the sylvester monoid:   

    \begin{proposition}
    \label{proposition:right_precedence_characterization}
        For $u,v \in \anset^*$, we have that $u \equiv_{\sylv} v$ if and only if $u$ and $v$ share exactly the same content and right precedences.
    \end{proposition}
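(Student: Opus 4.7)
\emph{Plan.} The forward direction is routine: if $u \equiv_{\sylv} v$ then $\psymb_{\sylv}(u) = \psymb_{\sylv}(v)$, so Lemma~\ref{lemma:right_precedence_ancestor_nodes} gives equal right precedences, and the contents agree since the sylvester congruence preserves content.

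For the reverse direction I plan to prove the stronger claim that the tree $\psymb_{\sylv}(u)$ is determined by the content $c(u)$ together with the set of right precedences of $u$. I proceed by induction on $|u|$, with the empty word as the base case. The key step is to extract the root label $r$ of $\psymb_{\sylv}(u)$ from this data alone. I claim that $r$ is characterized as the unique letter $x \in supp(u)$ such that (i) no right precedence of the form $b$-$x$ exists, and (ii) every $y \in supp(u)$ with $y < x$ admits a right precedence $b$-$y$ with $b \leq x$. Using Lemma~\ref{lemma:right_precedence_ancestor_nodes}, the actual root satisfies (i) because its topmost node is the root itself (no ancestors), and (ii) because every strictly smaller $y$ has its topmost occurrence in the left subtree, whose ancestors include $r$, bounding the smallest greater ancestor by $r$. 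Uniqueness follows: if $x < r$ then (i) fails, as the root is a greater ancestor of topmost $x$; and if $x > r$ then (ii) fails, taking $y = r$, which admits no right precedence at all.

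Once the common root $r$ is known, the content and right precedences of the two subtrees can be read off from those of the whole tree. The right subtree has content $\{y \in c(u) : y > r\}$ and inherits exactly those right precedences $b$-$y$ of $u$ with $y > r$ (the root, being smaller, is not a greater ancestor of their topmost nodes). The left subtree has content $\{y \in c(u) : y \leq r\}$ minus one copy of $r$; for each $x < r$ with right precedence $c$-$x$ of index $k$ in $u$, the corresponding left-subtree right precedence is $c$-$x$ of index $k$ when $c < r$, is $r$-$x$ of index $k-1$ when $c = r$ and $k \geq 2$, and is absent when $c = r$ and $k = 1$. (Copies of $r$ appearing in the left subtree have no right precedence there, since all labels are at most $r$.) Applying the induction hypothesis to the postorder readings of the two subtrees, both strictly shorter than $u$, shows each subtree is uniquely determined, whence $\psymb_{\sylv}(u) = \psymb_{\sylv}(v)$.

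The main subtlety lies in verifying the translation formula for left-subtree right precedences: one must carefully account for the single extra greater ancestor contributed by the root when its label coincides with the smallest greater ancestor of a topmost node in the left subtree, as this shifts the recorded index by exactly one.
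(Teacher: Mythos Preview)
Your argument is correct and takes a genuinely different route from the paper's.

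The paper proves the converse by contrapositive: assuming $c(u)=c(v)$ but $\psymb_{\sylv}(u)\neq\psymb_{\sylv}(v)$, it introduces the ``maximal common top'' tree $T_{u,v}$, invokes Lemma~\ref{lemma:existence_of_child_node} to locate a node whose child is labelled differently in the two trees, and then does a case analysis (left child versus right child) to produce a right precedence on which $u$ and $v$ disagree. Your approach is instead constructive and inductive: you extract the root label $r$ directly from the content and right-precedence data via the characterisation (i)--(ii), and then recursively reconstruct the two subtrees after translating the data. This avoids the auxiliary Lemma~\ref{lemma:existence_of_child_node} entirely and yields, as a by-product, an explicit algorithm for rebuilding $\psymb_{\sylv}(u)$ from the invariants. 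The paper's argument, by contrast, makes the point of divergence between two unequal trees concrete without needing to track how precedences transform under passage to subtrees; each approach trades one piece of bookkeeping for another.

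Two small points worth tightening when you write it out in full. First, your uniqueness argument for $r$ implicitly uses the observation following Lemma~\ref{lemma:right_precedence_left_child}, namely that any topmost node lying in the left subtree of a node $b$ has a $c$-$a$ right precedence for some $c\leq b$; you should cite this explicitly when verifying condition (ii) for the actual root. Second, the induction hypothesis is cleanest if phrased as ``if $|w|<|u|$ and $w,w'$ share content and right precedences then $\psymb_{\sylv}(w)=\psymb_{\sylv}(w')$'', applied to the postfix readings of the corresponding subtrees of $\psymb_{\sylv}(u)$ and $\psymb_{\sylv}(v)$; this sidesteps any appearance of circularity in saying the subtrees are ``determined''. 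Your translation formula for the left-subtree precedences (splitting on $c<r$ versus $c=r$, and on $k=1$ versus $k\geq 2$) is correct as stated.
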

    
    \begin{proof}
        We already know that if two words $u$ and $v$ are in the same sylvester class, then they share the same content and right precedences, by Lemma~\ref{lemma:right_precedence_ancestor_nodes}. Suppose now that $u$ and $v$ share the same content, but $\psymb_{\sylv} (u) \neq \psymb_{\sylv} (v)$. Once again, consider the right strict binary search tree $T_{u,v}$, obtained by removing all nodes of decreasing depth in $\psymb_{\sylv} (u)$ and $\psymb_{\sylv} (v)$ until we obtain the same tree. By the previous lemma and our hypothesis, there must exist a node in $\psymb_{\sylv} (u)$ and $\psymb_{\sylv} (v)$, corresponding to a leaf of highest depth in $T_{u,v}$, which has a child node labelled differently in $\psymb_{\sylv} (u)$ than in $\psymb_{\sylv} (v)$. 
        
        Let $a$ be the label of the child node in $\psymb_{\sylv} (u)$ and $b$ be the label in $\psymb_{\sylv} (v)$. Assume, without loss of generality, that $a < b$. Notice that the node $a$ in $\psymb_{\sylv} (u)$ must be topmost, otherwise, the node $b$ in $\psymb_{\sylv} (v)$ would be in the left subtree of a node labelled with $a$. Let $c$ be the label of the parent node of node $a$ in $\psymb_{\sylv} (u)$ and node $b$ in $\psymb_{\sylv} (v)$. Using an argument similar to the one used in the proof of Lemma~\ref{lemma:existence_of_child_node}, we can see that the topmost node $a$ is also a descendant of the node $c$, in $\psymb_{\sylv} (v)$. Even more so, the topmost node $a$ must be in the left subtree of the node $b$, in $\psymb_{\sylv} (v)$.
        
        Suppose nodes $a$ and $b$ are left children. First of all, notice that $a < b \leq c$. Furthermore, by Lemma~\ref{lemma:right_precedence_left_child}, we have that $u$ has a $c$-$a$ right precedence of index $k$, where $k$ is the number of ancestor nodes of the node $a$ labelled with $c$. If $b$ is equal to $c$, then $v$ either has a $c$-$a$ right precedence of index strictly greater than $k$, or it does not have a $c$-$a$ right precedence at all; otherwise, if $b$ is strictly less than $c$, then $v$ does not have a $c$-$a$ right precedence.
        
        Suppose nodes $a$ and $b$ are right children. If $u$ has a $d$-$a$ right precedence of index $k$, for some $d \in supp\left(T_{u,v}\right)$ and some $k \in \nset$, then the node $a$ is in the left subtree of a node $d$, in $\psymb_{\sylv} (u)$ as well as in $\psymb_{\sylv} (v)$, and the node $b$ is in the left subtree of that node $d$ in $\psymb_{\sylv} (v)$. If $b$ is equal to $d$, then $v$ either has a $d$-$a$ right precedence of index greater than $k$, or it does not have a $d$-$a$ right precedence at all; otherwise, if $b$ is strictly less than $d$, then $v$ does not have a $d$-$a$ right precedence. On the other hand, if $u$ does not have a $d$-$a$ right precedence, for any $d \in supp\left(T_{u,v}\right)$, then $v$ has a $d'$-$a$ right precedence, for some symbol $d'$ which is either equal to $b$ or labels an ancestor node of the topmost node $a$ in the left subtree of the node $b$, in $\psymb_{\sylv} (v)$.
        
        Thus, we can conclude that if $u$ and $v$ share the same content, but $\psymb_{\sylv} (u) \neq \psymb_{\sylv} (v)$, then $u$ and $v$ do not share the same right precedences.
    \end{proof}
    
	In light of the previous result, we say that an element $[u]_{\sylv}$ of $\sylv$ has a $b$-$a$ right precedence (of index $k$) if the word $u$ itself, and hence any other word in $[u]_{\sylv}$, has a $b$-$a$ right precedence (of index $k$).

    The sylvester and hypoplactic monoids are closely related (see \cite{priez_binary_trees,cm_sylv_crystal}). In fact, the hypoplactic monoid is a homomorphic image of the sylvester monoid. Notice that, for any right strict binary search tree with support $\{a_1 < \dots < a_k\}$, all words in its sylvester class have a $a_{i+1}$-$a_i$ inversion if and only if a node $a_{i+1}$ appears in a right subtree of a node $a_i$. Hence, the map defined by
    \[
    [u]_{\sylv} \mapsto [u]_{\hypo},
    \]
    for $u \in \aset$, is a natural homomorphism of $\sylv$ into $\hypo$, though not an embedding. Therefore, $\hypo$ is in the variety generated by $\sylv$, and thus must satisfy all identities satisfied by $\sylv$.

    The insertion algorithm for left strict binary search trees is dual to Algorithm \ref{alg_rsbst}:

    \begin{algorithm}[h]
        \DontPrintSemicolon
        \KwIn{A left strict binary search tree $T$ and a symbol $a \in \aset$.}     
        \KwOut{A left strict binary search tree $T \leftarrow a$.}
        \BlankLine
        \eIf{$T$ is empty,}{create a node and label it $a$;}{examine the label $x$ of the root node; if $a < x$, recursively insert $a$ into the left subtree of the root node; otherwise recursively insert $a$ into the right subtree of the root node;}
        \Return the resulting tree.
        \caption{}
        \label{alg_lsbst}
    \end{algorithm}
    
    Let $u \in \aset^*$. Using the insertion algorithm above, we can compute a unique left strict binary search tree $\psymb_{\sylvh} (u)$ from $u$: we start with the empty tree and insert the symbols of $u$, one-by-one from left-to-right. Notice that, for any left strict binary search tree $T$, we have that $\psymb_{\sylvh}\left( \mathrm{Pre}(T) \right) = T$, that is, the left strict insertion algorithm, with the prefix reading of $T$ as input, gives back $T$. As such, any left strict binary search tree can be seen as an output of the left strict insertion algorithm.
    
    We define the \textit{\#-sylvester congruence} $\equiv_{\sylvh}$, the infinite-rank \textit{\#-sylvester monoid} $\sylvh$, the \textit{\#-sylvester monoid of rank} $n$ $\sylvhn$, and the \textit{content} and \textit{support} of a \#-sylvester class in a similar fashion as before.
    
    The \#-sylvester monoid can also be defined by the presentation $\left\langle \aset \mid \mathcal{R}_{\sylvh} \right\rangle$, where
    \begin{align*}
        \mathcal{R}_{\sylvh} =& \left\{ (buac,buca): a < b \leq c, u \in \aset^* \right\}.
    \end{align*}
    These defining relations are known as the \textit{\#-sylvester relations}.
    
    The sylvester and \#-sylvester monoids of finite rank $n$ are anti\chyph isomorphic: a natural anti-isomorphism of $\sylvn$ into $\sylvhn$ arises by taking a right strict binary search tree, reflecting it about a vertical axis, and renumbering the label $i$ of each node to $n-i+1$, thus obtaining a left strict binary search tree. Similarly, we can define a natural anti-isomorphism of $\sylvhn$ into $\sylvn$. These anti-isomorphisms allow us to easily deduce results for the \#-sylvester monoid from results for the sylvester monoids. However, notice that these natural anti-isomorphisms do not arise in the infinite rank case, as there is no natural way to renumber the labels of the nodes. In fact, we have the following:
    
    \begin{proposition}
    \label{proposition:no_anti-isomorphism_sylv_sylvh}
        There is no anti-isomorphism of $\sylv$ into $\sylvh$.
    \end{proposition}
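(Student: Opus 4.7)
My plan is to show that any anti-isomorphism $\phi\colon\sylv\to\sylvh$ would induce an order-reversing bijection of $\aset$ onto itself, which is impossible since $\aset=\{1<2<3<\cdots\}$ is well-ordered and has no infinite strictly descending sequence.

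First I would identify the ``atoms'' of $\sylv$, i.e.\ the nonidentity elements that cannot be written as a product of two nonidentity elements. Since the sylvester relations are length-preserving, the tree associated to $[u]_\sylv$ has exactly $|u|$ nodes, so the atoms of $\sylv$ are precisely the single-node classes $\{[a]_\sylv : a \in \aset\}$; the analogous statement holds in $\sylvh$. Any bijective anti-homomorphism $\phi$ preserves atoms (because $\phi^{-1}$ is also an anti-homomorphism), hence induces a bijection $\sigma\colon\aset\to\aset$ defined by $\phi([a]_\sylv)=[\sigma(a)]_\sylvh$.

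Next I would exploit a very restricted family of sylvester relations. For each pair $a<c$ in $\aset$, taking $u=\varepsilon$ and $b=a$ in $caub\equiv_{\sylv}acub$ gives $[caa]_\sylv=[aca]_\sylv$ (valid since $a\leq a<c$). Applying the anti-homomorphism $\phi$ and writing $x=\sigma(a)$, $y=\sigma(c)$ yields
\[
[xxy]_\sylvh \;=\; [xyx]_\sylvh.
\]
The core technical step would then be a direct calculation, using Algorithm~\ref{alg_lsbst}, of the two trees $\psymb_{\sylvh}(xxy)$ and $\psymb_{\sylvh}(xyx)$ for distinct $x,y$. When $y<x$ both algorithms output the same tree, namely the one with root $x$, left child $y$ and right child $x$; when $y>x$, the first gives a right-chain $x\to x\to y$ while the second produces $x$ whose right child $y$ has a left child $x$, which are plainly different. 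Therefore $[xxy]_\sylvh=[xyx]_\sylvh$ iff $y<x$.

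Applying this to $x=\sigma(a)$ and $y=\sigma(c)$, which are distinct since $\sigma$ is a bijection and $a\neq c$, we conclude $\sigma(c)<\sigma(a)$ whenever $a<c$. Thus $\sigma$ is strictly order-reversing on $\aset$, producing the infinite strictly decreasing sequence $\sigma(1)>\sigma(2)>\sigma(3)>\cdots$ in $\aset\subseteq\mathbb{N}$, a contradiction. The whole argument is essentially the observation that the natural finite-rank anti-isomorphism depends on the order-reversing map $i\mapsto n-i+1$, and the main (mild) obstacle is verifying the characterisation of the equality $[xxy]_\sylvh=[xyx]_\sylvh$; this is a routine case analysis on the left-strict insertion, and could alternatively be phrased via the dual ``left precedence'' characterisation of $\sylvh$.
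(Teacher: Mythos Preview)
Your proof is correct and follows essentially the same approach as the paper: both show that an anti-isomorphism must send atoms to atoms, then exploit the sylvester relation $caa\equiv_{\sylv} aca$ (for $a<c$) to force $[xxy]_{\sylvh}=[xyx]_{\sylvh}$ with $x=\sigma(a)$, $y=\sigma(c)$, which can only hold when $y<x$. The only cosmetic difference is that the paper fixes $a=1$ and directly finds a single $b$ with $\sigma(b)>\sigma(1)$ for an immediate contradiction, whereas you phrase the conclusion as ``$\sigma$ is order-reversing'' and then invoke well-ordering of $\aset$.
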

    
    \begin{proof}
        Suppose, in order to obtain a contradiction, that there exists such an anti-isomorphism $\sigma: \sylv \rightarrow \sylvh$. Notice that, for any element of the form $[a]_{\sylv}$, where $a \in \aset$, we have that $\left| \sigma \left( [a]_{\sylv} \right) \right| = 1$, since $\left| \sigma \left( [a]_{\sylv} \right) \right| > 1$ implies that 
        \[
            \left| [a]_{\sylv} \right| = \left| \sigma^{-1} \left( \sigma \left( [a]_{\sylv} \right) \right) \right| > 1,
        \]
        along with the fact that the identity of $\sylv$ is mapped to the identity of $\sylvh$. Thus, $\sigma$ maps the $\sylv$-classes of generators into $\sylvh$-classes of generators.
        
        Thus, we have that $\sigma \left( [1]_{\sylv} \right) = [x]_{\sylvh}$, for some $x \in \aset$. Since we are considering the infinite-rank case, there must exist $b,y \in \aset$ such that $y > x$ and $\sigma \left( [b]_{\sylv} \right) = [y]_{\sylvh}$. Notice that, since $\sigma$ maps $1$ to $x$, then $b > 1$. As such, we have that $b11 \equiv_{\sylv} 1b1$, but on the other hand,
        \[
            \sigma \left( [b11]_{\sylv} \right) = [xxy]_{\sylvh} \neq [xyx]_{\sylvh} = \sigma \left( [1b1]_{\sylv} \right),
        \]
        which contradicts the hypothesis that $\sigma$ is an anti-isomorphism.
    \end{proof}
    
	We now give an alternative characterization of the \#-sylvester monoid, parallel to the one given for the sylvester monoid. Let $u \in \aset^*$ and let $a,b \in supp(u)$ be such that $a < b$. We say $u$ has a $a$-$b$ \textit{left precedence} of index $k$ if, when reading $u$ from left to right, $a$ occurs before the first occurrence of $b$ and, for any $c \in supp(u)$ such that $a < c < b$, $c$ does not occur before the first occurrence of $b$. The number of occurrences of $a$ before the first occurrence of $b$ is the \textit{index} of the left precedence.
    
    Notice that, by the definition of a left precedence, for any given $b \in supp(u)$, there is at most one $a \in supp(u)$ such that $u$ has a $a$-$b$ left precedence (of index $k$, for some $k \in \nset$). On the other hand, $u$ can have several left precedences of the form $a$-$x$, for a fixed $a$.
    
    \begin{example}
        The word $1231$ has a $1$-$2$ and a $2$-$3$ left precedence, both of index $1$, while $1312$ has a $1$-$2$ left precedence of index $2$ and a $1$-$3$ left precedence of index $1$, and $3121$ has a $1$-$2$ left precedence of index $1$.
    \end{example}
    
    The following proposition is parallel to Proposition~\ref{proposition:right_precedence_characterization}:
    
    \begin{proposition}
    \label{proposition:left_precedence_characterization}
        For $u,v \in \anset^*$, we have that $u \equiv_{\sylvh} v$ if and only if $u$ and $v$ share exactly the same content and left precedences.
    \end{proposition}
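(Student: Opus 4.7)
The plan is to deduce the statement from Proposition~\ref{proposition:right_precedence_characterization} using the natural anti-isomorphism between $\sylvn$ and $\sylvhn$ described earlier in the subsection. Concretely, on the word level this anti-isomorphism is given by the map
\[
    \phi: u_1 u_2 \cdots u_m \longmapsto (n-u_m+1)(n-u_{m-1}+1)\cdots(n-u_1+1),
\]
which satisfies $\phi(uv) = \phi(v)\phi(u)$ and induces a bijection $\sylvhn \to \sylvn$ with $u \equiv_{\sylvh} v$ if and only if $\phi(u) \equiv_{\sylv} \phi(v)$.

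The core of the proof is then a pair of word-level translation claims. First, since the relabelling $i \mapsto n-i+1$ is a bijection on $\anset$, two words $u,v \in \anset^*$ share the same content if and only if $\phi(u)$ and $\phi(v)$ do. Second, I would prove the following bijection of precedence structures: for $a,b \in \anset$ with $a < b$ and any $k \in \nset$, the word $u$ has an $a$-$b$ left precedence of index $k$ if and only if $\phi(u)$ has a $(n-b+1)$-$(n-a+1)$ right precedence of index $k$. This is obtained by noting that reading $\phi(u)$ from right to left yields $u$ read from left to right with the relabelling applied, and that the relabelling reverses the order, so the condition $a < c < b$ in the definition of left precedence corresponds precisely to the condition $n-b+1 < n-c+1 < n-a+1$ in the definition of right precedence.

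With these two translations established, the proof is immediate: by Proposition~\ref{proposition:right_precedence_characterization} applied to $\phi(u)$ and $\phi(v)$, we have $\phi(u) \equiv_{\sylv} \phi(v)$ if and only if $\phi(u)$ and $\phi(v)$ share the same content and right precedences, which by the two translations is equivalent to $u$ and $v$ sharing the same content and left precedences.

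The only real obstacle is the careful bookkeeping for the precedence translation, namely verifying that ``first occurrence'' and ``no symbol strictly between'' conditions pass correctly through $\phi$; but this is routine once one notes that $\phi$ simultaneously reverses the reading direction and reverses the order on the alphabet, so that the two reversals cancel in the relevant inequalities. As an alternative, if one prefers to avoid invoking the anti-isomorphism, the entire proof can be carried out in parallel to Proposition~\ref{proposition:right_precedence_characterization} by developing analogues of Lemmas~\ref{lemma:right_precedence_ancestor_nodes}, \ref{lemma:right_precedence_left_child}, and \ref{lemma:existence_of_child_node} for left strict binary search trees and Algorithm~\ref{alg_lsbst}; the arguments are word-for-word dual.
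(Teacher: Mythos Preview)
Your proposal is correct. The paper itself gives no explicit proof of this proposition; it simply states that it is ``parallel to Proposition~\ref{proposition:right_precedence_characterization}'', which is precisely your alternative approach (duplicating the chain of lemmata for left strict trees).

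Your main approach, via the word-level anti-isomorphism $\phi$, is a genuinely different route and in some sense cleaner: rather than redoing the sequence of Lemmas~\ref{lemma:right_precedence_ancestor_nodes}--\ref{lemma:existence_of_child_node} in dual form, you reduce the statement directly to the already-proven Proposition~\ref{proposition:right_precedence_characterization}. The cost is that you must first verify that the tree-level anti-isomorphism described in the text really is induced by your word-level map $\phi$ (this is routine but not stated in the paper), whereas the parallel-proof approach needs no such bridge. Both approaches work only in finite rank, which is all the proposition claims.

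One small slip: in the paper's convention a right precedence is written ``larger-smaller'', so the translated right precedence should be $(n{-}a{+}1)$-$(n{-}b{+}1)$, not $(n{-}b{+}1)$-$(n{-}a{+}1)$ as you wrote. Your surrounding discussion (the translation of the ``strictly between'' condition and of the index) is consistent with the correct order, so this is a typo rather than a conceptual error.
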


	In light of the previous result, we say that an element $[u]_{\sylvh}$ of $\sylvh$ has a $a$-$b$ left precedence (of index $k$) if the word $u$ itself, and hence any other word in $[u]_{\sylvh}$, has a $a$-$b$ left precedence (of index $k$).

    The hypoplactic monoid is also a homomorphic image of the \#-sylvester monoid. Hence, $\hypo$ is in the variety generated by $\sylvh$, and thus must satisfy all identities satisfied by $\sylvh$.

	
	\subsection{The Baxter monoid}
    \label{subsection:baxt}
    
    This subsection gives a brief overview of the Baxter monoid and its related combinatorial object and insertion algorithm, as well as results from \cite{cm_identities}. Due to its connection with the sylvester and \#-sylvester monoids, we also give an alternative characterization of this monoid, derived from the characterization of $\sylv$ and $\sylvh$ given in the previous subsection. For more information, see \cite{giraudo_baxter2} and \cite{cm_sylv_crystal}.
        
    The \textit{canopy} of a rooted binary tree $T$ is the word over $\{0,1\}$ obtained by doing an infix traversal of $T$, outputting $1$ when an empty left subtree is encountered and $0$ when an empty right subtree is encountered, then omitting the first and last symbols of the resulting word (which correspond, respectively, to the empty left subtree of the leftmost node and the empty right subtree of the rightmost node). 
    
    A pair of twin binary search trees consist of a left strict binary search tree $T_L$ and a right strict binary search tree $T_R$ , such that $T_L$ and $T_R$ contain the same symbols, and the canopies of $T_L$ and $T_R$ are complementary, in the sense that the $i$-th symbol of the canopy of $T_L$ is $0$ (respectively $1$) if and only if the $i$-th symbol of the canopy of $T_R$ is $1$ (respectively $0$). The following is an example of a pair of twin binary search trees:
    
    \begin{center}
        $\left(
        \begin{tikzpicture}[tinybst,baseline=-9mm]
            \node {$5$}
            child { node {$4$}
                child { node {$1$}
                    child[missing]
                    child { node {$1$}
                        child[missing]
                        child { node {$2$} }
                    }
                }
                child{ node {$4$} }
            }
            child[missing]
            child { node {$5$}
                child[missing]
                child { node {$7$}
                    child { node {$6$}
                        child { node {$5$} }
                        child[missing]
                    }
                child[missing]
                }
            };
        \end{tikzpicture}
        ,
        \begin{tikzpicture}[tinybst,baseline=-9mm]
            \node {$4$}
            child { node {$2$}
                child { node {$1$}
                    child { node {$1$} }
                    child[missing]
                }
                child { node {$4$} }
            }
            child[missing]
            child { node {$5$}
                child { node {$5$}
                    child { node {$5$} }
                    child[missing]
                }
                child { node {$6$}
                    child[missing]
                    child { node {$7$} }
                }
            };
        \end{tikzpicture}
        \right)$      
    \end{center}
    
    Let $u \in \aset^*$. Due to {\cite[Proposition~4.5]{giraudo_baxter2}}, the pair of binary seach trees $( \psymb_{\sylvh} (u), \psymb_{\sylv} (u) )$ is a pair of \textbf{twin} binary search trees. As such, by defining $\psymb_{\baxt} (u)$ as this pair, we can use Algorithms \ref{alg_rsbst} and \ref{alg_lsbst} to compute a unique pair of twin binary search trees $\psymb_{\baxt} (u)$ from $u$.
        
    We define the \textit{Baxter congruence} $\equiv_{\baxt}$, the infinite-rank \textit{Baxter monoid} $\baxt$, the \textit{Baxter monoid of rank} $n$ $\baxtn$, and the \textit{content} and \textit{support} of a Baxter class in a similar fashion as before.
    
    The Baxter monoid can also be defined by the presentation $\left\langle \aset \mid \mathcal{R}_{\baxt} \right\rangle$, where
    \begin{align*}
        \mathcal{R}_{\baxt} =& \left\{ (cudavb,cuadvb): a \leq b < c \leq d, u,v \in \aset^* \right\} \\
        & \cup \left\{ (budavc,buadvc): a < b \leq c < d, u,v \in \aset^* \right\}.
    \end{align*}
    These defining relations are known as the \textit{Baxter relations}.
    
    The Baxter, sylvester and \#-sylvester monoids are closely related, due to {\cite[Proposition~3.7]{giraudo_baxter2}}: For $u,v \in \aset^*$, we have that $u \equiv_{\baxt} v$ if and only if $u \equiv_{\sylv} v$ and $u \equiv_{\sylvh} v$. As a consequence of this, and Propositions~\ref{proposition:right_precedence_characterization} and \ref{proposition:left_precedence_characterization}, we have the following:
    
    \begin{corollary}
    \label{corollary:baxter_precedence_characterization}
        For $u,v \in \anset^*$,  $u \equiv_{\baxt} v$ if and only if $u$ and $v$ share exactly the same content and left and right precedences.
    \end{corollary}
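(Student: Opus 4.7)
The statement is a direct corollary of the three results cited immediately before it, so the proof plan is essentially to chain equivalences. The plan is to argue by a short two-way chain, using the characterization of $\equiv_{\baxt}$ as the intersection of $\equiv_{\sylv}$ and $\equiv_{\sylvh}$ given in \cite[Proposition~3.7]{giraudo_baxter2}, together with the right- and left-precedence characterizations of $\equiv_{\sylv}$ and $\equiv_{\sylvh}$ established in Proposition~\ref{proposition:right_precedence_characterization} and Proposition~\ref{proposition:left_precedence_characterization} respectively.

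First I would handle the forward direction: suppose $u \equiv_{\baxt} v$. Then by \cite[Proposition~3.7]{giraudo_baxter2} we have $u \equiv_{\sylv} v$ and $u \equiv_{\sylvh} v$. Applying Proposition~\ref{proposition:right_precedence_characterization} to the first congruence gives that $u$ and $v$ share the same content and the same right precedences; applying Proposition~\ref{proposition:left_precedence_characterization} to the second gives that they share the same content and the same left precedences. Combining these yields exactly the assertion that $u$ and $v$ share the same content and both the left and right precedences.

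For the converse, suppose $u,v \in \anset^*$ share the same content and the same left and right precedences. Then, in particular, they share the same content and right precedences, so Proposition~\ref{proposition:right_precedence_characterization} gives $u \equiv_{\sylv} v$; similarly, they share the same content and left precedences, so Proposition~\ref{proposition:left_precedence_characterization} gives $u \equiv_{\sylvh} v$. Invoking \cite[Proposition~3.7]{giraudo_baxter2} once more, these two congruences together imply $u \equiv_{\baxt} v$.

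There is no real obstacle here, since every implication is already available in the preceding results; the only thing to be slightly careful about is noting that the ``same content'' hypothesis appears in both Propositions~\ref{proposition:right_precedence_characterization} and \ref{proposition:left_precedence_characterization}, so it is appropriate to state it only once in the corollary rather than separately for each side. The proof is therefore just a direct combination of the three cited results, and can be written in a few lines.
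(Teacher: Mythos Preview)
Your proposal is correct and matches the paper's approach exactly: the paper does not even write out a proof environment for this corollary, simply stating it ``as a consequence of'' \cite[Proposition~3.7]{giraudo_baxter2} together with Propositions~\ref{proposition:right_precedence_characterization} and~\ref{proposition:left_precedence_characterization}, which is precisely the chain of equivalences you spell out.
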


	In light of the previous result, we say that an element $[u]_{\baxt}$ of $\baxt$ has a $b$-$a$ right (respectively, left) precedence of index $k$ if the word $u$ itself, and hence any other word in $[u]_{\baxt}$, has a $b$-$a$ right (respectively, left) precedence of index $k$.

    It is also easy to see that the maps defined by
    \[
    [u]_{\baxt} \mapsto [u]_{\sylv} \quad \text{and} \quad [u]_{\baxt} \mapsto [u]_{\sylvh},
    \]
    for $u \in \aset$, are natural homomorphisms of $\baxt$ into $\sylv$ and $\sylvh$, respectively (though not embeddings). Therefore, both $\sylv$ and $\sylvh$ are monoids in the variety generated by $\baxt$, and thus must satisfy all identities satisfied by $\baxt$. 
    
	
	\section{Embeddings}
	\label{section:embeddings}
	
	
	In this section, we prove that the sylvester monoids of ranks greater than or equal to $2$ satisfy exactly the same identities. We do this by constructing embeddings of sylvester monoids of any rank greater than $2$ into direct products of copies of the sylvester monoid of rank $2$, as it is not possible to embed one into another. Thus, they generate the same variety and, by Birkhoff's Theorem, satisfy exactly the same identities. We also show that the basis rank of the variety generated by $\sylv$ is $2$.

	By parallel reasoning, we prove the same results for the \#-sylvester monoids of ranks greater than or equal to $2$. Furthermore, as a consequence of {\cite[Proposition~3.7]{giraudo_baxter2}}, we also obtain the same results for the Baxter monoids of ranks greater than or equal to $2$.	
	
	\subsection{Non-existence of embedding into a monoid of lesser rank}
    \label{subsection:no_embedding_n_nm}
    
    It is not possible to embed a sylvester monoid of finite rank into a sylvester monoid of lesser rank:
	
	\begin{proposition}
    \label{proposition:no_emb_sylvn_sylvm}
		For all $n > m \geq 1$, there is no embedding of $\sylvn$ into $\sylv_m$.
	\end{proposition}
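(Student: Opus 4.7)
I will argue by contradiction, supposing an embedding $\phi: \sylvn \hookrightarrow \sylv_m$ exists with $n > m \geq 1$, and splitting into two cases according to whether $m = 1$ or $m \geq 2$.

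\textbf{Case $m=1$.} Here $\sylv_1 \cong (\mathbb{N}, +)$, which is commutative. On the other hand, for any $n \geq 2$, the monoid $\sylvn$ is non-commutative: the trees $\psymb_{\sylv}(12)$ (root $2$ with left child $1$) and $\psymb_{\sylv}(21)$ (root $1$ with right child $2$) are distinct, so $[1]_{\sylv}[2]_{\sylv} \neq [2]_{\sylv}[1]_{\sylv}$. A commutative monoid cannot contain a non-commutative submonoid, so $\phi$ cannot exist.

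\textbf{Case $m \geq 2$.} First I would reduce to $n = m+1$: the submonoid of $\sylvn$ generated by $[1]_{\sylv},\ldots,[m+1]_{\sylv}$ is isomorphic to $\sylv_{m+1}$ (the sylvester relations restricted to this alphabet are exactly those defining $\sylv_{m+1}$), and restriction of $\phi$ to this submonoid is again an embedding. So we may assume $n = m+1$. Writing $x_i := \phi([i]_{\sylv})$, the content map $c : \sylv_m \to \mathbb{N}^m$ composed with $\phi$ descends to a monoid homomorphism $\bar\phi : \mathbb{N}^n \to \mathbb{N}^m$ with $\bar\phi(e_i) = c(x_i)$. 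Since $n > m$, the vectors $c(x_1),\ldots,c(x_n)$ are $\mathbb{Q}$-linearly dependent. Clearing denominators and separating into positive and negative parts produces $\alpha,\beta \in \mathbb{N}^n$ with $\alpha \neq \beta$, disjoint supports, and $\sum_i \alpha_i\, c(x_i) = \sum_i \beta_i\, c(x_i)$ in $\mathbb{N}^m$.

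Next I would look at the canonical sorted elements $u = [1^{\alpha_1}2^{\alpha_2}\cdots n^{\alpha_n}]_{\sylv}$ and $v = [1^{\beta_1}2^{\beta_2}\cdots n^{\beta_n}]_{\sylv}$ in $\sylvn$; since $\alpha \neq \beta$ they have distinct contents, hence $u \neq v$. Their images are $\phi(u) = x_1^{\alpha_1}\cdots x_n^{\alpha_n}$ and $\phi(v) = x_1^{\beta_1}\cdots x_n^{\beta_n}$, which already agree on content in $\sylv_m$. To derive the contradiction I would invoke Proposition~\ref{proposition:right_precedence_characterization} and attempt to show that these two products also have identical right precedences, forcing $\phi(u) = \phi(v)$ in $\sylv_m$ against injectivity of $\phi$.

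\textbf{Main obstacle.} The content/abelianization step is routine, but matching right precedences is not automatic: different choices of the representative words $w_i$ for $x_i$ can produce different precedence data after concatenation. The hard part is therefore to exploit the sylvester relations that $x_1,\ldots,x_n$ must satisfy (by virtue of $\phi$ being a homomorphism) to force enough structural rigidity on the $x_i$ so that either the sorted elements $u,v$ above already collide under $\phi$, or a more cunning pair can be produced; it may be cleaner to argue directly from $[1]_{\sylv}^{\alpha_1}\cdots[n]_{\sylv}^{\alpha_n}$ using the fact that these left-spine elements enjoy a particularly simple tree shape, and to propagate this rigidity to the $x_i$ via the sylvester relations they satisfy.
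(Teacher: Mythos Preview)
Your Case $m=1$ is correct and matches the paper. Your reduction in Case $m\geq 2$ is fine, but the proof stalls exactly where you say it does, and the obstacle you flag is real rather than merely technical. Equality of contents of $x_1^{\alpha_1}\cdots x_n^{\alpha_n}$ and $x_1^{\beta_1}\cdots x_n^{\beta_n}$ gives no control over their right precedences: for instance with $n=3$, $m=2$, $c(x_1)=(1,0)$, $c(x_2)=(0,1)$, $c(x_3)=(1,1)$ one gets $\alpha=(1,1,0)$, $\beta=(0,0,1)$, but $x_1x_2$ could equal $[12]_{\sylv_2}$ while $x_3=[21]_{\sylv_2}$, and nothing in the sylvester relations among the $x_i$ forbids this. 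The ``structural rigidity'' you hope for is not there: the defining relations $caub \equiv acub$ impose constraints on how the $x_i$ interact, not on what they individually are, and they are far too weak to pin down right precedences of arbitrary products.

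The paper's argument sidesteps precedence-matching entirely. It reduces (as you do) to $\sylvn\hookrightarrow\sylvnm$, but then takes $n$ \emph{minimal}; minimality forces $\phi\bigl([2\cdots n]_{\sylvn}\bigr)$ to have full support $\anmset$ (else restriction to $\langle 2,\ldots,n\rangle\cong\sylvnm$ would embed into a proper $\sylv_{n-2}$, contradicting minimality). The key observation is then that if $w\in\sylvnm$ has full support, left-multiplying $w$ by anything cannot change its right precedences---all first-from-the-right occurrences are already inside $w$. Hence $\phi([12])\cdot\phi([2\cdots n]) = \phi([21])\cdot\phi([2\cdots n])$, while $[12]\cdot[2\cdots n]$ and $[21]\cdot[2\cdots n]$ differ in $\sylvn$ (their $2$--$1$ right precedences have indices $2$ and $1$ respectively). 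This is the missing idea: rather than trying to \emph{match} precedences of two separately constructed elements, manufacture a right factor with full support so that precedences become insensitive to the left factor.
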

	
	\begin{proof}
		First of all, notice that $\sylv_1$ is isomorphic to the free monogenic monoid and $\sylvn$ is non\chyph commutative, for any $n \geq 2$. Thus, there is no embedding of $\sylvn$ into $\sylv_1$.
		
		On the other hand, if there exists an embedding of $\sylvn$ into $\sylv_m$, for some $n > m \geq 2$, then, since $\sylv_m$ is a submonoid of $\sylvnm$, there must also exist an embedding of $\sylvn$ into $\sylvnm$. As such, we just need to prove that this second embedding cannot exist.
		
		Suppose, in order to obtain a contradiction, that there exists $n \geq 3$ such that we have an embedding $\phi: \sylvn \longmapsto \sylvnm$. Without loss of generality, suppose $n$ is the smallest positive integer in such conditions.
		
		Observe that $supp \left( \phi \left( \left[2 \cdots n \right]_{\sylvn} \right) \right) = \anmset$, that is, the image of the product of all generators of $\sylvn$, except for $1$, has all the possible letters of $\anmset$. Indeed, if $supp \left( \phi \left( \left[2 \cdots n \right]_{\sylvn} \right) \right) \subsetneqq \anmset$, we would be able to construct an embedding from the submonoid isomorphic to $\sylvnm$ of $\sylvn$, generated by all generators of $\sylvn$ except for $1$, into a submonoid of $\sylvnm$ isomorphic to $\sylv_{n-2}$. This contradicts the minimality of $n$.
		
		Hence, since all letters of $\anmset$ already occur in $\phi \left( \left[2 \cdots n \right]_{\sylvn} \right)$, if we multiply this element by any other element of $\sylvnm$ to the left, we obtain an element with the same right precedences as $\phi \left( \left[2 \cdots n \right]_{\sylvn} \right)$. Thus, by Proposition~\ref{proposition:right_precedence_characterization}, we have that 
		\[
		\phi \left( [12]_{\sylvn} \cdot \left[2 \cdots n \right]_{\sylvn} \right) = \phi \left( [21]_{\sylvn} \cdot \left[2 \cdots n \right]_{\sylvn} \right).
		\]
		
		On the other hand, we have that
		\[
		[12]_{\sylvn} \cdot \left[2 \cdots n \right]_{\sylvn} \neq [21]_{\sylvn} \cdot \left[2 \cdots n \right]_{\sylvn},
		\]
		since the left-hand side has a $2$-$1$ right precedence of index $2$, and the right-hand side has a $2$-$1$ right precedence of index $1$.
        
        This contradicts our hypothesis that $\phi$ is injective. Hence, for all $n \geq 2$, there is no embedding of $\sylvn$ into $\sylvnm$. As such, there is no embedding of $\sylvn$ into $\sylv_m$, for $n > m \geq 2$.
	\end{proof}
	
	\begin{corollary}
    \label{corollary:no_emb_sylv_into_sylvn}
		There is no embedding of $\sylv$ into $\sylvn$, for any $n \in \nset$. 
	\end{corollary}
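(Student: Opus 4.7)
The plan is to reduce the statement to Proposition~\ref{proposition:no_emb_sylvn_sylvm}, which has just been established. The key observation is that, for every $n \in \nset$, the monoid $\sylvnp$ embeds naturally into $\sylv$ as a submonoid (as noted in Subsection~\ref{subsection:sylv_and_sylv_hash}, $\sylv_k$ is a submonoid of $\sylv_m$ whenever $k \le m$, and $\sylv_k$ is a submonoid of $\sylv$ for every $k$).

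Concretely, suppose for contradiction that there exists $n \in \nset$ and an embedding $\phi \colon \sylv \hookrightarrow \sylvn$. Composing $\phi$ with the natural inclusion $\iota \colon \sylvnp \hookrightarrow \sylv$ yields an embedding $\phi \circ \iota \colon \sylvnp \hookrightarrow \sylvn$. Since $n+1 > n$, this directly contradicts Proposition~\ref{proposition:no_emb_sylvn_sylvm}. Hence no such $\phi$ exists.

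There is essentially no obstacle here; the whole argument is a one-line restriction-to-a-finite-rank-submonoid trick, and the real work was already done in the proof of Proposition~\ref{proposition:no_emb_sylvn_sylvm}. The only point worth being careful about is making explicit that the natural inclusion $\sylvnp \hookrightarrow \sylv$ is indeed an embedding of monoids, which follows immediately from the fact that the sylvester congruence on $\aset^*$ restricts to the sylvester congruence on $\anpset^*$.
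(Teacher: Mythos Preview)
Your proof is correct and follows exactly the same approach as the paper: restrict a hypothetical embedding $\sylv \hookrightarrow \sylvn$ to the submonoid $\sylvnp \subseteq \sylv$ to obtain an embedding $\sylvnp \hookrightarrow \sylvn$, contradicting Proposition~\ref{proposition:no_emb_sylvn_sylvm}.
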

	
	\begin{proof}
		If such an embedding existed, for some $n \in \nset$, then, by restricting the embedding to the first $n+1$ generators of $\sylv$, we would obtain an embedding of $\sylvnp$ into $\sylvn$, which contradicts the previous proposition.
	\end{proof}
	
	If there existed an embedding of a \#-sylvester monoid of finite rank into a \#-sylvester monoid of lesser rank, then we would be able to compose it with the natural anti-isomorphisms, thus obtaining an embedding for the sylvester case. Therefore, no such embedding exists, as well as no embedding of the infinite-rank \#-sylvester monoid into a \#-sylvester monoid of finite rank. We can also prove the corresponding result for the Baxter monoid:
	
	\begin{proposition}
    \label{proposition:no_emb_baxtn_baxtnm}
		For all $n > m \geq 1$, there is no embedding of $\baxtn$ into $\baxt_m$.
	\end{proposition}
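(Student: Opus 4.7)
The plan is to mimic the proof of Proposition~\ref{proposition:no_emb_sylvn_sylvm}, substituting Corollary~\ref{corollary:baxter_precedence_characterization} for Proposition~\ref{proposition:right_precedence_characterization}. First I would dispose of the case $m = 1$: $\baxt_1$ is free monogenic, hence commutative, while $\baxtn$ is non\chyph commutative for every $n \geq 2$ (for instance, $[12]_{\baxtn} \neq [21]_{\baxtn}$, as only the former has a $2$\chyph $1$ right precedence). For $n > m \geq 2$, since $\baxt_m$ embeds as a submonoid of $\baxtnm$, it suffices to rule out an embedding $\phi \colon \baxtn \hookrightarrow \baxtnm$; supposing one exists, let $n \geq 3$ be minimal. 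Exactly as in the sylvester case, minimality forces $supp\bigl(\phi([2 \cdots n]_{\baxtn})\bigr) = \anmset$, since otherwise restricting $\phi$ to the copy of $\baxtnm$ inside $\baxtn$ generated by $\{2, \ldots, n\}$ would yield an embedding into some $\baxt_k$ with $k \leq n - 2$, violating the minimality of $n$.

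The new ingredient compared with the sylvester case is that, by Corollary~\ref{corollary:baxter_precedence_characterization}, equality in $\baxtnm$ requires agreement of both left and right precedences, so I must pad on both sides rather than only on the left. Writing $z = [2 \cdots n]_{\baxtn}$, I would consider
\[
x = z \cdot [12]_{\baxtn} \cdot z \qquad \text{and} \qquad y = z \cdot [21]_{\baxtn} \cdot z,
\]
and fix a word $w$ representing $\phi(z)$, which satisfies $supp(w) = \anmset$. Then $\phi(x)$ and $\phi(y)$ are represented by $wu_1w$ and $wu_2w$ for suitable words $u_1, u_2$ representing $\phi([12]_{\baxtn})$ and $\phi([21]_{\baxtn})$. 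Since the rightmost copy of $w$ already contains every letter of $\anmset$, the first occurrence of any letter read from the right, and the count of any other letter preceding it, are determined entirely by this rightmost $w$; hence $\phi(x)$ and $\phi(y)$ share all right precedences. By the dual argument applied to the leftmost copy of $w$, they also share all left precedences, so $\phi(x) = \phi(y)$ in $\baxtnm$ by Corollary~\ref{corollary:baxter_precedence_characterization}.

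It remains to show $x \neq y$ in $\baxtn$. Representing $x$ and $y$ by the concrete words $23 \cdots n\,1\,2\,23 \cdots n$ and $23 \cdots n\,2\,1\,23 \cdots n$ and reading from the right, the former exhibits a $2$\chyph $1$ right precedence of index $2$ while the latter has one of index $1$; thus $x \neq y$ by Corollary~\ref{corollary:baxter_precedence_characterization}, contradicting the injectivity of $\phi$. The main obstacle relative to the sylvester argument is arranging a pair of distinct elements of $\baxtn$ whose images agree on both left and right precedences simultaneously; sandwiching $[12]_{\baxtn}$ and $[21]_{\baxtn}$ between two copies of $z$ handles this cleanly, because left precedences and right precedences are respectively ``absorbed'' by an initial or a final factor of full support.
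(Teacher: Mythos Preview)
Your proposal is correct and follows essentially the same approach as the paper: both arguments adapt the sylvester proof by sandwiching $[12]_{\baxtn}$ and $[21]_{\baxtn}$ between two copies of $[2\cdots n]_{\baxtn}$, so that the leftmost factor absorbs all left precedences and the rightmost factor absorbs all right precedences in the image. The paper's proof is terser (it simply names these two elements and refers back to Proposition~\ref{proposition:no_emb_sylvn_sylvm}), while you have spelled out the details; the only small point you leave implicit is that $\phi(x)$ and $\phi(y)$ also share the same content, which follows immediately since $\phi([12]) = \phi([1])\phi([2])$ and $\phi([21]) = \phi([2])\phi([1])$.
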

	
	\begin{proof}
		The proof follows the same reasoning as given in the proof of Proposition~\ref{proposition:no_emb_sylvn_sylvm}: Instead of considering the elements 
		\[
		[12]_{\sylvn} \cdot \left[2 \cdots n \right]_{\sylvn} \quad \text{and} \quad [21]_{\sylvn} \cdot \left[2 \cdots n \right]_{\sylvn}, 
		\]
		we consider, respectively, the elements 
		\[
		\left[2 \cdots n \right]_{\baxtn} \cdot \left[12 \right]_{\baxtn} \cdot \left[2 \cdots n \right]_{\baxtn} \quad \text{and} \quad \left[2 \cdots n \right]_{\baxtn} \cdot \left[21 \right]_{\baxtn} \cdot \left[2 \cdots n \right]_{\baxtn}.
		\]
	\end{proof}
	
	\begin{corollary}
    \label{corollary:no_emb_baxt_into_baxtn}
		There is no embedding of $\baxt$ into $\baxtn$, for any $n \in \nset$. 
	\end{corollary}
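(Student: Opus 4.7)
My plan is to mirror exactly the argument used to deduce Corollary~\ref{corollary:no_emb_sylv_into_sylvn} from Proposition~\ref{proposition:no_emb_sylvn_sylvm}, now using the just-proven Proposition~\ref{proposition:no_emb_baxtn_baxtnm} as the input. The key observation is that, just as $\sylvn$ embeds naturally into $\sylv$ as the submonoid generated by $\{1,\dots,n\}$, the Baxter monoid of rank $n$ embeds naturally into $\baxt$ as the submonoid generated by $\{1,\dots,n\}\subseteq\aset$; this is immediate from the fact that the defining relations $\mathcal{R}_{\baxt}$ restrict to the Baxter relations on $\anset^*$, or equivalently from Corollary~\ref{corollary:baxter_precedence_characterization}, since two words over $\anset^*$ have the same content and the same left/right precedences when viewed in $\anset^*$ if and only if they do when viewed in $\aset^*$.

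Assuming for contradiction that there exists an embedding $\phi:\baxt\longrightarrow\baxtn$ for some $n\in\nset$, I would restrict $\phi$ to the submonoid of $\baxt$ generated by $\{[1]_{\baxt},\dots,[n+1]_{\baxt}\}$. By the previous paragraph this submonoid is isomorphic to $\baxtnp$, and the restriction of an injective homomorphism to a submonoid is again an injective homomorphism. Composing with the isomorphism, I obtain an embedding of $\baxtnp$ into $\baxtn$, which directly contradicts Proposition~\ref{proposition:no_emb_baxtn_baxtnm} (taking the parameters to be $n+1>n$). This contradiction establishes the corollary.

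There is essentially no obstacle here; the whole argument is a one-paragraph reduction to Proposition~\ref{proposition:no_emb_baxtn_baxtnm}. The only point that deserves to be stated explicitly is the embedding of the finite-rank Baxter monoid into the infinite-rank one, which the paper has implicitly used throughout (analogous to the remark made in Subsection~\ref{subsection:sylv_and_sylv_hash} that $\sylv_n$ is a submonoid of $\sylv$). So the proof I would write is just two or three lines, closely imitating the proof of Corollary~\ref{corollary:no_emb_sylv_into_sylvn}.
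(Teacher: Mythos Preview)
Your proposal is correct and follows exactly the approach the paper intends: the corollary is stated without proof immediately after Proposition~\ref{proposition:no_emb_baxtn_baxtnm}, with the understanding that one argues precisely as in the proof of Corollary~\ref{corollary:no_emb_sylv_into_sylvn}, restricting a hypothetical embedding to the copy of $\baxtnp$ inside $\baxt$ to contradict Proposition~\ref{proposition:no_emb_baxtn_baxtnm}.
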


	
	\subsection{Embedding into a direct product of copies of monoids of rank 2}
    \label{subsection:embedding_direct_product_rank2}
	
	Although it is not possible to embed the sylvester, \#-sylvester and Baxter monoids, of rank greater than or equal to $2$, into their corresponding monoids of rank $2$, we now show how to embed each of them into a direct product of copies of their corresponding monoid of rank $2$, starting with the sylvester monoid. For simplicity, we shall denote by $M^k$ the direct product of $k$ copies of a monoid $M$.
	
	For any $i,j \in \aset$, with $i < j$, define a map from $\aset$ to $\sylv_2$ in the following way: For any $a \in \aset$,
	\begin{align*} 
        a &\longmapsto \begin{cases}
            [1]_{\sylv_2} & \text{if } a = i;\\
            [2]_{\sylv_2} & \text{if } a = j;\\
            [21]_{\sylv_2} & \text{if } i < a < j;\\
            \left[\varepsilon\right]_{\sylv_2} & \text{otherwise};
        \end{cases}
	\end{align*}
	and extend it to a homomorphism $\varphi_{ij}: \aset^* \longrightarrow \sylv_2$, in the usual way. This homomorphism is analogous to the one given in \cite[Subsection~3.2]{cmr_hypo_id}. The proofs of the following lemmata and propositions make use of the new characterization using right precedences for the sylvester monoid.
	
	Notice that $\varphi_{ij} (w)$ is the sylvester class of the word obtained from $w$ by replacing any occurrence of $i$ by $1$; any occurrence of $j$ by $2$; any occurrence of an $a$, with $i < a < j$, by $21$; and erasing any occurrence of any other element.
	
	\begin{lemma}
	\label{lemma:phi_factor_homomorphism}
		$\varphi_{ij}$ factors to give a homomorphism $\varphi_{ij} : \sylv \longrightarrow \sylv_2$.
	\end{lemma}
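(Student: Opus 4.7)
The map $\varphi_{ij}$ is defined as a homomorphism $\aset^* \to \sylv_2$, so factoring it through $\sylv = \aset^*/{\equiv_{\sylv}}$ amounts to checking that it respects each defining sylvester relation. Writing $\tilde x = \varphi_{ij}(x)$ on letters, the goal is, for every $a \leq b < c$ and every $u \in \aset^*$, the equality
\[
\tilde c\,\tilde a\,\varphi_{ij}(u)\,\tilde b \;=\; \tilde a\,\tilde c\,\varphi_{ij}(u)\,\tilde b
\]
in $\sylv_2$.

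To verify this, the plan is to invoke Proposition~\ref{proposition:right_precedence_characterization}, which says that two words represent the same element of $\sylv_2$ if and only if they have the same content and the same right precedences. The contents of the two products coincide trivially, since the two pre-images $caub$ and $acub$ share a multiset of letters and $\varphi_{ij}$ acts letter-by-letter. In $\sylv_2$ the only possible right precedence is a $2$-$1$ one, and its index is simply the number of $2$'s strictly to the right of the rightmost $1$; the whole proof therefore reduces to checking that this single count is the same for both products.

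I would then carry out a short case split on the letter $b$. If $i \leq b < j$ then $\tilde b$ contains a $1$, so the rightmost $1$ in either product lies inside the common suffix $\varphi_{ij}(u)\,\tilde b$ and the $2$-count following it is the same on both sides. If $b < i$ then $a \leq b < i$ forces $\tilde a = \varepsilon$, so both prefixes $\tilde c \tilde a$ and $\tilde a \tilde c$ collapse to $\tilde c$. If $b \geq j$ then $c > b \geq j$ forces $\tilde c = \varepsilon$, so both prefixes collapse to $\tilde a$. In each branch the two products are literally equal as elements of $\sylv_2$.

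The main subtlety to be aware of is that $\tilde c\,\tilde a$ can genuinely differ from $\tilde a\,\tilde c$ in $\sylv_2$ — for instance with $\tilde a = [1]_{\sylv_2}$ and $\tilde c = [2]_{\sylv_2}$ one has $[1]\cdot[2]\neq[2]\cdot[1]$ — so one cannot simply cancel the common suffix. The case split above is engineered precisely so that whenever the two prefixes actually differ, the suffix $\varphi_{ij}(u)\,\tilde b$ already supplies a letter $1$ that pins the rightmost $1$ outside the disputed region, forcing the two right-precedence indices to match. Verifying this interlock between the hypothesis $a \leq b < c$ and the case division is the only non-routine step; everything else is a bookkeeping exercise with the right-precedence characterization.
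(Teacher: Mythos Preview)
Your proof is correct and follows essentially the same approach as the paper: both verify the sylvester defining relations via the right-precedence characterization (Proposition~\ref{proposition:right_precedence_characterization}), with the paper case-splitting on whether $\tilde a$ or $\tilde c$ is the identity while you case-split on the position of $b$ relative to $i,j$, which amounts to the same analysis. Your organization is arguably a bit cleaner, since your three cases on $b$ are mutually exclusive and exhaustive, whereas the paper handles the trivial case first and then argues that in the remaining case $\tilde b$ ends in $1$, forcing both images to have no right precedence.
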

	
	\begin{proof}
	    Since $\sylv$ is given by the presentation $\left\langle \aset \mid \mathcal{R}_{\sylv} \right\rangle$, we just need to verify that both sides of the sylvester relations have the same image under $\varphi_{ij}$. 
	    
	    Let $a,b,c \in \aset$ and $u \in \aset^*$ be such that $a \leq b < c$. If $\varphi_{ij}$ maps either $a$ or $c$ to $\left[\varepsilon\right]_{\sylv_2}$, then the images of $caub$ and $acub$ under $\varphi_{ij}$ coincide. Assume, without loss of generality, that $\varphi_{ij}$ does not map any letter to $\left[\varepsilon\right]_{\sylv_2}$. Then, $\varphi_{ij}$ maps $a$ to $[1]_{\sylv_2}$, $b$ to either $[21]_{\sylv_2}$ or $[1]_{\sylv_2}$, and $c$ to $[2]_{\sylv_2}$. Notice that $b$ is mapped to an element with no right precedences. As such, we have that
        \[
            \varphi_{ij} (caub) = \varphi_{ij} (acub),
        \]
        since both sides of the equality have no right precedences. Hence, $\mathcal{R}_{\sylv} \subseteq \ker \varphi_{ij}$.
	\end{proof}
	
	Let $w \in \anset^*$, for some $n \geq 3$. Suppose $supp \left( w \right) = \{a_1 < \dots < a_m\}$, for some $m \in \nset$. Observe that, ranging $1 \leq i < m$, we can get from the maps $\varphi_{a_i a_{i+1}}$ the number of occurrences of $a_i$ and $a_{i+1}$ in $w$, since, when reading any word in $\varphi_{a_i a_{i+1}}([w]_{\sylv})$, every occurrence of $1$ corresponds exactly to an occurrence of $a_i$ in $w$, and every occurrence of $2$ corresponds exactly to an occurrence of $a_{i+1}$. 
	
	Recall that there is at most one index $j$, with $i < j \leq m$, such that $w$ has a $a_j$-$a_i$ right precedence. Ranging $1 \leq i < j \leq m$, we can also check if $w$ has a $a_j$-$a_i$ right precedence: If $w$ does not have any $b$-$a_i$ right precedence, for $b < a_j$, then no $b$ occurs before $a_i$, when reading $w$ from right-to-left. As such, we have that, when reading any word in $\varphi_{a_i a_j}([w]_{\sylv})$ from right-to-left, the first occurrence of $1$ corresponds to the first occurrence of $a_i$, when reading $w$ from right-to-left, and all occurrences of $2$ before the first occurrence of $1$ correspond to all occurrences of $a_j$ before the first occurrence of $a_i$. Thus, $\varphi_{a_i a_j}([w]_{\sylv})$ has a $2$-$1$ right precedence if and only if $w$ has a $a_j$-$a_i$ right precedence, and the indexes must coincide. Hence, we get the following lemma:
	
	\begin{lemma}
    \label{lemma:varphi_equiv_sylvn}
		Let $u,v \in \anset^*$. Then, $u \equiv_{\sylv} v$ if and only if $\varphi_{ij}([u]_{\sylv}) = \varphi_{ij}([v]_{\sylv})$, for all $1 \leq i < j \leq n$.
	\end{lemma}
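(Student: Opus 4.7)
The plan is to prove the biconditional by combining Lemma~\ref{lemma:phi_factor_homomorphism} with Proposition~\ref{proposition:right_precedence_characterization}. The forward implication is immediate, since $[u]_{\sylv} = [v]_{\sylv}$ together with the well-definedness of each $\varphi_{ij}$ on $\sylv$ gives $\varphi_{ij}([u]_{\sylv}) = \varphi_{ij}([v]_{\sylv})$ for every $1 \leq i < j \leq n$. For the converse, I will show that coincidence of all the $\varphi_{ij}$-images forces $u$ and $v$ to share both the same content and the same right precedences, so that Proposition~\ref{proposition:right_precedence_characterization} yields $u \equiv_{\sylv} v$.

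Content is recovered using only the maps $\varphi_{i,i+1}$ for $1 \leq i < n$. Since no letter lies strictly between $i$ and $i+1$, such a map sends $i$ to $[1]_{\sylv_2}$, $i+1$ to $[2]_{\sylv_2}$, and every other letter to $[\varepsilon]_{\sylv_2}$; consequently the number of $1$'s (respectively $2$'s) in any representative of $\varphi_{i,i+1}([u]_{\sylv})$ equals the number of occurrences of $i$ (respectively $i+1$) in $u$. Ranging $i$ over $\{1,\dots,n-1\}$ thus reads $c(u)$ off from the $\varphi$-images, and the hypothesis gives $c(u) = c(v)$; in particular $supp(u) = supp(v)$.

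Right precedences are recovered as follows. For each letter $a \in supp(u) = supp(v)$, let $p_u(a)$ be the unique $b > a$ for which $u$ has a $b$-$a$ right precedence, if such a letter exists. The claim is that $p_u(a)$ is the smallest $j \in \{a+1,\dots,n\}$ for which $\varphi_{aj}([u]_{\sylv})$ has a $2$-$1$ right precedence, and that the index of this $2$-$1$ right precedence coincides with the index of the $p_u(a)$-$a$ right precedence in $u$. This follows from the discussion preceding the statement: for $j < p_u(a)$ no letter of $(a,j]$ occurs to the right of the rightmost $a$ in $u$ (since $(a,j] \subseteq (a,p_u(a))$), so $\varphi_{aj}([u]_{\sylv})$ contains no $2$ to the right of its rightmost $1$; for $j = p_u(a)$ the exclusion clause in the definition of right precedence prevents any intermediate letter from contributing a spurious $21$, so the copies of $p_u(a)$ occurring to the right of the rightmost $a$ in $u$ translate into exactly the correct number of $2$'s before the rightmost $1$ of the image. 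Applying the same characterization to $v$ and invoking the $\varphi_{ij}$-image hypothesis yields $p_u(a) = p_v(a)$ with matching index for every $a$, completing the match of right precedences.

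The main obstacle is the right-precedence recovery: because $\varphi_{ij}$ sends every letter strictly between $i$ and $j$ to the two-letter word $21$, a single image $\varphi_{ij}([w]_{\sylv})$ can exhibit a $2$-$1$ right precedence that is an artefact of intermediate letters rather than of a genuine $j$-$i$ right precedence of $w$. Minimality in $j$ is what filters out such artefacts, and the exact matching of indices relies crucially on the exclusion clause in the definition of right precedence, which forbids any letter of $(a, p_u(a))$ to the right of the rightmost $a$. Once these two recoveries are in place, both hypotheses of Proposition~\ref{proposition:right_precedence_characterization} are satisfied and the converse follows.
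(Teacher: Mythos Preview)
Your proof is correct and follows essentially the same approach as the paper: the forward direction is immediate from well-definedness (Lemma~\ref{lemma:phi_factor_homomorphism}), and the converse recovers content from the maps $\varphi_{i,i+1}$ and right precedences from the maps $\varphi_{aj}$ via Proposition~\ref{proposition:right_precedence_characterization}. Your write-up is somewhat more explicit than the paper's (which simply appeals to the observations in the preceding two paragraphs), in particular spelling out why minimality of $j$ is what isolates the genuine $p_u(a)$-$a$ right precedence from artefacts introduced by intermediate letters mapping to $[21]_{\sylv_2}$, but the underlying argument is the same.
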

	
	\begin{proof}
	    The proof of the implication is trivial, since $\varphi_{ij}$ is well-defined as a map, for all $1 \leq i < j \leq n$. The proof of the converse follows from the previous observations, as well as Proposition~\ref{proposition:right_precedence_characterization}.
	\end{proof}

	For each $n \in \nset$, with $n \geq 3$, let $I_n$ be the index set
	\[
	    \left\{ (i,j): 1 \leq i < j \leq n \right\},
	\]
	and let $I := \bigcup_{n \in \nset} I_n$. Now, consider the map
	\[
	\phi_n : \sylvn \longrightarrow \prod\limits_{I_n} \sylv_2,
	\]
	whose $(i,j)$-th component is given by $\varphi_{ij}([w]_{\sylv})$, for $w \in \anset^*$ and $(i,j) \in I_n$.
		
	\begin{proposition}
    \label{prop:sylvn_embed_sylv2}
		The map $\phi_n$ is an embedding.
	\end{proposition}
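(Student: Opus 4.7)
The plan is to observe that Proposition~\ref{prop:sylvn_embed_sylv2} is essentially an immediate packaging of Lemma~\ref{lemma:phi_factor_homomorphism} together with Lemma~\ref{lemma:varphi_equiv_sylvn}, so the work is to verify that $\phi_n$ is well-defined, is a homomorphism, and is injective, each of which reduces componentwise to an already-established fact.

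First, I would note that $\phi_n$ is well-defined and a homomorphism: by Lemma~\ref{lemma:phi_factor_homomorphism}, for each $(i,j) \in I_n$ the map $\varphi_{ij}$ factors through $\sylv$ (and hence through the submonoid $\sylvn$, by restriction) to give a homomorphism into $\sylv_2$. Since $\phi_n$ is defined componentwise by these $\varphi_{ij}$, and a map into a direct product is a homomorphism exactly when every component is a homomorphism, $\phi_n : \sylvn \longrightarrow \prod_{I_n} \sylv_2$ is a monoid homomorphism.

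Next, I would verify injectivity. Let $u, v \in \anset^*$ and suppose $\phi_n([u]_{\sylv}) = \phi_n([v]_{\sylv})$. By the definition of $\phi_n$, this means $\varphi_{ij}([u]_{\sylv}) = \varphi_{ij}([v]_{\sylv})$ for every $(i,j) \in I_n$, i.e.\ for every pair $1 \leq i < j \leq n$. Lemma~\ref{lemma:varphi_equiv_sylvn} then gives $u \equiv_{\sylv} v$, so $[u]_{\sylv} = [v]_{\sylv}$ in $\sylvn$.

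I do not expect any serious obstacle: the characterization of $\equiv_{\sylv}$ by content and right precedences (Proposition~\ref{proposition:right_precedence_characterization}), combined with the observation already made in the paragraph preceding Lemma~\ref{lemma:varphi_equiv_sylvn} that the collection of maps $\varphi_{a_i a_j}$ recovers both the multiplicity of each letter in the support and every $a_j$-$a_i$ right precedence together with its index, has reduced the proposition to two short citations. Hence the proof will amount to little more than assembling these lemmata.
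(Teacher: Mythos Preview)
Your proposal is correct and matches the paper's own proof almost exactly: the paper simply states that $\phi_n$ is clearly a homomorphism (you make this explicit via Lemma~\ref{lemma:phi_factor_homomorphism} and the componentwise structure) and then invokes Lemma~\ref{lemma:varphi_equiv_sylvn} for injectivity, just as you do.
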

	
	\begin{proof}
		It is clear that $\phi_n$ is a homomorphism. It follows from the definition of $\phi_n$ and Lemma~\ref{lemma:varphi_equiv_sylvn} that, for any $u,v \in \anset^*$, we have $u \equiv_{\sylvn} v$ if and only if $\phi_n([u]_{\sylv}) = \phi_n([v]_{\sylv})$, hence $\phi_n$ is an embedding.
	\end{proof}
	
	Thus, for each $n \in \nset$, we can embed $\sylvn$ into a direct product of copies of $\sylv_2$.
	
	Similarly, we can embed $\sylv$ into a direct product of infinitely many copies of $\sylv_2$. Consider the map
	\[
	\phi : \sylv \longrightarrow \prod\limits_{I} \sylv_2,
	\]
	whose $(i,j)$-th component is given by $\varphi_{ij}([w]_{\sylv})$, for $w \in \aset^*$ and $(i,j) \in I$.
	
	\begin{proposition}
    \label{prop:sylv_embed_sylv2}
		The map $\phi$ is an embedding.
	\end{proposition}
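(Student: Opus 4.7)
The plan is to reduce the infinite-rank case to the finite-rank case already handled in Proposition~\ref{prop:sylvn_embed_sylv2}. First, I note that $\phi$ is a homomorphism: by Lemma~\ref{lemma:phi_factor_homomorphism}, each component $\varphi_{ij} : \sylv \to \sylv_2$ is a homomorphism, and the map into a direct product given componentwise by homomorphisms is itself a homomorphism. So the content of the proof is injectivity.

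To prove injectivity, I would take arbitrary $u,v \in \aset^*$ with $\phi([u]_{\sylv}) = \phi([v]_{\sylv})$ and aim to conclude $u \equiv_{\sylv} v$. The key observation is that, although $\aset$ is infinite, the words $u$ and $v$ are finite, so there exists some $n \in \nset$ such that $u,v \in \anset^*$. For every pair $(i,j) \in I_n$, the inclusion $I_n \subseteq I$ together with the fact that $\varphi_{ij}$ is defined identically on $\anset^*$ whether viewed as a map from $\sylv$ or from $\sylvn$ gives that the $(i,j)$-component of $\phi([u]_{\sylv})$ coincides with the $(i,j)$-component of $\phi_n([u]_{\sylv})$, and similarly for $v$. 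Hence $\phi_n([u]_{\sylv}) = \phi_n([v]_{\sylv})$, and Proposition~\ref{prop:sylvn_embed_sylv2} yields $u \equiv_{\sylv} v$.

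There is no real obstacle here: the entire argument rests on the trivial fact that finite words only involve finitely many letters, which reduces the infinite-rank injectivity statement to an already-proven finite-rank one. The only point deserving mention is the compatibility of $\varphi_{ij}$ as a map from $\sylv$ with $\varphi_{ij}$ as a map from $\sylvn$ on words in $\anset^*$, which is immediate from the uniform definition of $\varphi_{ij}$ on letters of $\aset$.
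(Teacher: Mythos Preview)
Your proof is correct and follows essentially the same approach as the paper: both reduce injectivity to the finite-rank case by choosing $n$ large enough that $u,v \in \anset^*$ and observing that the $(i,j)$-components of $\phi$ and $\phi_n$ agree for $(i,j) \in I_n$. The only cosmetic difference is that the paper invokes Lemma~\ref{lemma:varphi_equiv_sylvn} directly at the last step, whereas you invoke Proposition~\ref{prop:sylvn_embed_sylv2} (which itself rests on that lemma).
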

	
	\begin{proof}
		It is clear that $\phi$ is a homomorphism. Notice that, for any word $w \in \aset^*$, there must exist $n \in \nset$ such that $w \in \anset^*$. Furthermore, for $(i,j) \in I_n$, we have that the $(i,j)$-th component of $\phi([w]_{\sylv})$ is equal to the $(i,j)$-th component of $\phi_n([w]_{\sylv})$. Thus, for $u,v \in \aset^*$, if $\phi([u]_{\sylv}) = \phi([v]_{\sylv})$, then $\phi_n([u]_{\sylv}) = \phi_n([v]_{\sylv})$, for some $n \in \nset$ such that $u,v \in \anset^*$.
		
		It follows from Lemma~\ref{lemma:varphi_equiv_sylvn} that, for any $u,v \in \aset^*$, we have $u \equiv_{\sylv} v$ if and only if $\phi([u]_{\sylv}) = \phi([v]_{\sylv})$, hence $\phi$ is an embedding.
	\end{proof}
		
	As such, all sylvester monoids of rank higher than $2$ are in the variety generated by $\sylv_2$. Since $\sylv_2$ is a submonoid of $\sylv$ and $\sylvn$, for any $n \geq 3$, they all generate the same variety, which we will denote by $\mathbf{V}_{\sylv}$. Thus, by Birkhoff's Theorem, we have the following result:
	
	\begin{theorem}
    \label{theorem:sylv-id-sylv_2}
		For any $n \geq 2$, $\sylv$ and $\sylvn$ satisfy exactly the same identities.
	\end{theorem}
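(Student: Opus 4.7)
The plan is to deduce the theorem as an essentially immediate corollary of the embedding results already established, namely Propositions~\ref{prop:sylvn_embed_sylv2} and~\ref{prop:sylv_embed_sylv2}, together with the standard observation that the class of monoids satisfying a given identity is closed under taking submonoids and direct products (which is exactly the ``$\Leftarrow$'' direction of Birkhoff's Theorem, and does not even require the full HSP machinery).

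First I would fix $n \geq 2$ and argue the easy inclusion: since $\sylv_2$ embeds as a submonoid into $\sylvn$ (and into $\sylv$), any identity satisfied by $\sylvn$ (respectively, $\sylv$) is automatically satisfied by $\sylv_2$. Thus every identity of $\sylvn$ or of $\sylv$ belongs to the equational theory of $\sylv_2$.

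For the converse direction I would use the embeddings. Suppose $u \approx v$ is an identity satisfied by $\sylv_2$. Then it is satisfied by any direct power $\sylv_2^{|I_n|}$, hence by the submonoid $\phi_n(\sylvn) \leq \prod_{I_n} \sylv_2$, and therefore, since $\phi_n$ is an embedding by Proposition~\ref{prop:sylvn_embed_sylv2}, it is satisfied by $\sylvn$. The same argument applied to $\phi$ (Proposition~\ref{prop:sylv_embed_sylv2}) shows that $\sylv_2$'s identities are also satisfied by $\sylv$. Combining with the first paragraph, $\sylv_2$, $\sylvn$ (for every $n \geq 2$), and $\sylv$ all have exactly the same equational theory, which is precisely the equational theory of the variety $\mathbf{V}_{\sylv}$.

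There is really no main obstacle here: all the combinatorial content has been absorbed into the construction of the homomorphisms $\varphi_{ij}$ and the verification that $\phi_n$ and $\phi$ are injective via the right precedence characterization (Proposition~\ref{proposition:right_precedence_characterization}). The proof of the theorem itself is just the observation that identities are preserved under submonoids and direct products, applied in both directions.
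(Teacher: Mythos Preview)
Your proposal is correct and matches the paper's own argument essentially verbatim: the paper deduces the theorem from the embeddings of Propositions~\ref{prop:sylvn_embed_sylv2} and~\ref{prop:sylv_embed_sylv2} together with the fact that $\sylv_2$ is a submonoid of each $\sylvn$ and of $\sylv$, invoking Birkhoff's Theorem to conclude that all these monoids generate the same variety and hence satisfy the same identities. Your observation that only closure under submonoids and direct products is needed (rather than the full HSP theorem) is accurate and slightly sharpens the presentation.
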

	
	Another consequence of $\mathbf{V}_{\sylv}$ being generated by $\sylv_2$ is the following:
	
	\begin{proposition}
	    The basis rank of $\mathbf{V}_{\sylv}$ is $2$.
	\end{proposition}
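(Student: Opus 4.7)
The plan is to show the basis rank is at most $2$ and at least $2$. The upper bound is essentially free: by Proposition~\ref{prop:sylv_embed_sylv2} and the discussion preceding Theorem~\ref{theorem:sylv-id-sylv_2}, the variety $\mathbf{V}_{\sylv}$ is generated by $\sylv_2$, which is a $2$-generated monoid, so the basis rank is at most $2$.

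For the lower bound, I would argue that the basis rank cannot be $1$. Any monogenic (i.e., $1$-generated) monoid is commutative, since its elements are all powers of a single generator (together with the identity), and powers of one element always commute. Therefore, the variety generated by any $1$-generated monoid satisfies the commutativity identity $xy \approx yx$. It then suffices to exhibit a pair of elements of $\sylv_2$ that do not commute, which in turn witnesses that $\mathbf{V}_{\sylv}$ does not satisfy $xy \approx yx$. For instance, $[12]_{\sylv_2} \neq [21]_{\sylv_2}$, since these two words have distinct right precedences and thus lie in different sylvester classes by Proposition~\ref{proposition:right_precedence_characterization}. Hence no $1$-generated monoid can generate $\mathbf{V}_{\sylv}$.

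Combining the two bounds gives the basis rank equal to $2$. I expect no real obstacle here; the argument is essentially a one-line observation once the embedding into a power of $\sylv_2$ has been established, and the only thing worth writing down carefully is that $\sylv_2$ itself is non-commutative, which follows immediately from the right-precedence characterization of the sylvester congruence.
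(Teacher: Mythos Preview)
Your proposal is correct and follows essentially the same approach as the paper: the upper bound comes from $\mathbf{V}_{\sylv}$ being generated by the $2$-generated monoid $\sylv_2$, and the lower bound from the fact that any monogenic monoid is commutative while $\sylv$ (or $\sylv_2$) is not. Your version is slightly more detailed in that you exhibit an explicit witness $[12]_{\sylv_2} \neq [21]_{\sylv_2}$ for non-commutativity, whereas the paper simply asserts that $\sylv$ is non-commutative.
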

	
	\begin{proof}
	    Since $\mathbf{V}_{\sylv}$ is generated by $\sylv_2$, and $\sylv_2$ is defined by a presentation where the alphabet has two generators, then $r_b \left(\mathbf{V}_{\sylv}\right)$ is less than or equal to $2$.
	    
	    On the other hand, notice that any monoid generated by a single element is commutative. Since $\sylv$ is not commutative, $\mathbf{V}_{\sylv}$ cannot be generated by any single monoid which is itself generated by a single element. As such, $r_b \left(\mathbf{V}_{\sylv}\right)$ is strictly greater than $1$.
	    
	    Hence, the basis rank of $\mathbf{V}_{\sylv}$ is $2$.
	\end{proof}
		
	By parallel reasoning, we can also prove that the \#-sylvester monoids of rank greater than or equal to $2$ embed into a direct product of copies of $\sylvh_2$. For any $i,j \in \aset$, with $i < j$, define the homomorphism $\varphi^{\#}_{ij}: \aset^* \longrightarrow \sylvh_2$ in an identical fashion to $\varphi_{ij}$, mapping a word to a \#-sylvester class instead of a sylvester class.
	
	As with the case of $\varphi_{ij}$ (see Lemma~\ref{lemma:phi_factor_homomorphism}), we have that $\varphi^{\#}_{ij}$ factors to give a homomorphism $\varphi^{\#}_{ij} : \sylvh \longrightarrow \sylvh_2$. Furthermore, we can also deduce the number of occurrences of each symbol in a word $w \in \anset^*$, for some $n \geq 3$, and its left precedences, by looking at the images of $[w]_{\sylvh}$ under $\varphi^{\#}_{ij}$, ranging $1 \leq i < j \leq n$. Thus, for $u,v \in \anset^*$, we have that $u \equiv_{\sylvhn} v$ if and only if $\varphi^{\#}_{ij}([u]_{\sylvh}) = \varphi^{\#}_{ij}([v]_{\sylvh})$, for all $1 \leq i < j \leq n$.
    
	For each $n \in \nset$, we can embed $\sylvhn$ into a direct product of copies of $\sylvh_2$, using the embedding
	\[
	\phi^{\#}_n : \sylvhn \longrightarrow \prod\limits_{I_n} \sylvh_2,
	\]
	whose $(i,j)$-th component is given by $\varphi^{\#}_{ij}([w]_{\sylvh})$, for $w \in \anset^*$ and $(i,j) \in I_n$. Similarly, we can embed $\sylvh$ into a direct product of infinitely many copies of $\sylvh_2$, using the embedding
	\[
	\phi^{\#} : \sylvh \longrightarrow \prod\limits_{I} \sylvh_2,
	\]
	whose $(i,j)$-th component is given by $\varphi^{\#}_{ij}([w]_{\sylvh})$, for $w \in \aset^*$ and $(i,j) \in I$.

	As such, all \#-sylvester monoids of rank higher than $2$ are in the variety generated by $\sylvh_2$. Since $\sylvh_2$ is a submonoid of $\sylvh$ and $\sylvhn$, for any $n \geq 3$, they all generate the same variety, which we will denote by $\mathbf{V}_{\sylvh}$. Thus, by Birkhoff's Theorem, we have the following result:
	
	\begin{theorem}
    \label{theorem:sylvh-id-sylvh_2}
		For any $n \geq 2$, $\sylvh$ and $\sylvhn$ satisfy exactly the same identities.
	\end{theorem}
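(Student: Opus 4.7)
The plan is to deduce Theorem~\ref{theorem:sylvh-id-sylvh_2} directly from the embedding results that have already been established for the \#-sylvester monoids, mirroring the argument used for Theorem~\ref{theorem:sylv-id-sylv_2}. The essential observation is that all the relevant monoids generate the same variety, and then Birkhoff's Theorem closes the argument.

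First I would note the two containments between varieties. Since $\sylvh_2$ is a submonoid of $\sylvhn$ for every $n \geq 2$ and of $\sylvh$ itself, it follows that $\mathbf{V}_{\sylvh_2} \subseteq \mathbf{V}_{\sylvhn}$ and $\mathbf{V}_{\sylvh_2} \subseteq \mathbf{V}_{\sylvh}$, where $\mathbf{V}_M$ denotes the variety generated by a monoid $M$. Conversely, the embeddings $\phi^{\#}_n : \sylvhn \hookrightarrow \prod_{I_n} \sylvh_2$ and $\phi^{\#} : \sylvh \hookrightarrow \prod_{I} \sylvh_2$ constructed just above exhibit $\sylvhn$ and $\sylvh$ as submonoids of direct products (in fact, direct powers) of $\sylvh_2$. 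Since varieties are closed under taking submonoids and direct products, this gives $\sylvhn, \sylvh \in \mathbf{V}_{\sylvh_2}$, and hence $\mathbf{V}_{\sylvhn} \subseteq \mathbf{V}_{\sylvh_2}$ and $\mathbf{V}_{\sylvh} \subseteq \mathbf{V}_{\sylvh_2}$.

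Combining the two containments, we obtain $\mathbf{V}_{\sylvh_2} = \mathbf{V}_{\sylvhn} = \mathbf{V}_{\sylvh}$ for every $n \geq 2$, so we may unambiguously denote this common variety by $\mathbf{V}_{\sylvh}$. By Birkhoff's Theorem, two monoids generate the same variety if and only if they satisfy exactly the same identities, which yields the desired conclusion that $\sylvh$ and $\sylvhn$ satisfy exactly the same identities for every $n \geq 2$.

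There is no real obstacle here: all the substantive work has been done in constructing the homomorphism $\varphi^{\#}_{ij}$ and verifying that the induced map $\phi^{\#}_n$ (and $\phi^{\#}$) is injective, which is precisely the \#-sylvester analog of Proposition~\ref{prop:sylvn_embed_sylv2} and Proposition~\ref{prop:sylv_embed_sylv2}. The theorem itself is simply the variety-theoretic consequence of those embeddings, and the proof will be a short paragraph rather than a detailed argument.
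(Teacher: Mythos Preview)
Your proposal is correct and matches the paper's approach exactly: the paper likewise observes that the embeddings $\phi^{\#}_n$ and $\phi^{\#}$ place all \#-sylvester monoids of rank at least $2$ in the variety generated by $\sylvh_2$, while $\sylvh_2$ being a submonoid of each gives the reverse containment, and then invokes Birkhoff's Theorem. The theorem is stated in the paper without a separate proof environment, precisely because (as you note) it is an immediate variety-theoretic consequence of the preceding embedding results.
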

	
	The basis rank of $\mathbf{V}_{\sylvh}$ is also $2$.
	
	Notice that the embeddings of the \#-sylvester monoids of finite rank greater than or equal to $2$ into a direct product of copies of $\sylvh_2$ can also be obtained using the anti-isomorphisms between sylvester and \#-sylvester monoids of finite rank, and the the previously obtained embeddings. However, we cannot use this argument for the infinite rank case, due to Proposition~\ref{proposition:no_anti-isomorphism_sylv_sylvh}. On the other hand, since anti-isomorphisms exist in the finite case, we can conclude that, due to Theorems~\ref{theorem:sylv-id-sylv_2} and \ref{theorem:sylvh-id-sylvh_2}, any monoid anti-isomorphic to $\sylv$ (respectively, $\sylvh$) is in the variety generated by $\sylvh$ (respectively, $\sylv$).
    
    Once again, we can also prove that the Baxter monoids of rank greater than or equal to $2$ embed into a direct product of copies of $\baxt_2$. For any $i,j \in \aset$, with $i < j$, define the homomorphism $\vartheta_{ij}: \aset^* \longrightarrow \baxt_2$ in an identical fashion to $\varphi_{ij}$, mapping a word to a Baxter class instead of a sylvester class.
	
	As with the case of $\varphi_{ij}$, we have that $\vartheta_{ij}$ factors to give a homomorphism $\vartheta_{ij} : \baxt \longrightarrow \baxt_2$. Furthermore, we can also deduce the number of occurrences of each symbol in a word $w \in \anset^*$, for some $n \geq 3$, and its left and right precedences, by looking at the images of $[w]_{\baxt}$ under $\vartheta_{ij}$, ranging $1 \leq i < j \leq n$. Thus, for $u,v \in \anset^*$, we have that $u \equiv_{\baxtn} v$ if and only if $\vartheta_{ij}([u]_{\baxt}) = \vartheta_{ij}([v]_{\baxt})$, for all $1 \leq i < j \leq n$.
    
	For each $n \in \nset$, we can embed $\baxtn$ into a direct product of copies of $\baxt_2$, using the embedding
	\[
	\theta_n : \baxtn \longrightarrow \prod\limits_{I_n} \baxt_2,
	\]
	whose $(i,j)$-th component is given by $\vartheta_{ij}([w]_{\baxt})$, for $w \in \anset^*$ and $(i,j) \in I_n$. Similarly, we can embed $\baxt$ into a direct product of infinitely many copies of $\baxt_2$, using the embedding
	\[
	\theta : \baxt \longrightarrow \prod\limits_{I} \baxt_2,
	\]
	whose $(i,j)$-th component is given by $\vartheta_{ij}([w]_{\baxt})$, for $w \in \aset^*$ and $(i,j) \in I$.

	As such, all Baxter monoids of rank higher than $2$ are in the variety generated by $\baxt_2$. Since $\baxt_2$ is a submonoid of $\baxt$ and $\baxtn$, for any $n \geq 3$, they all generate the same variety, which we will denote by $\mathbf{V}_{\baxt}$. Thus, by Birkhoff's Theorem, we have the following result:
	
	\begin{theorem}
    \label{theorem:baxt-id-baxt_2}
		For any $n \geq 2$, $\baxt$ and $\baxtn$ satisfy exactly the same identities.
	\end{theorem}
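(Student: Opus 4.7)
The plan is to apply verbatim the same reasoning just used to conclude Theorems~\ref{theorem:sylv-id-sylv_2} and \ref{theorem:sylvh-id-sylvh_2}, since all the required ingredients have been set up in the paragraphs immediately preceding the statement. Specifically, the embeddings $\theta_n \colon \baxtn \longrightarrow \prod_{I_n} \baxt_2$ and $\theta \colon \baxt \longrightarrow \prod_{I} \baxt_2$ already show that both $\baxtn$ (for every $n \geq 2$) and $\baxt$ lie in the variety $\mathbf{V}_{\baxt}$ generated by $\baxt_2$. Hence $\mathrm{Var}(\baxt) \subseteq \mathbf{V}_{\baxt}$ and $\mathrm{Var}(\baxtn) \subseteq \mathbf{V}_{\baxt}$ for all $n \geq 2$.

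For the reverse inclusions, I would note the observation already made in the statement: $\baxt_2$ is a submonoid of both $\baxt$ and of $\baxtn$ for every $n \geq 3$ (obtained by restricting to the submonoid generated by $\{1,2\}$). Therefore $\baxt_2 \in \mathrm{Var}(\baxt)$ and $\baxt_2 \in \mathrm{Var}(\baxtn)$, which gives $\mathbf{V}_{\baxt} \subseteq \mathrm{Var}(\baxt)$ and $\mathbf{V}_{\baxt} \subseteq \mathrm{Var}(\baxtn)$. Combining the two inclusions yields $\mathrm{Var}(\baxt) = \mathrm{Var}(\baxtn) = \mathbf{V}_{\baxt}$ for every $n \geq 2$.

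Finally, I would invoke Birkhoff's theorem, which asserts that two algebras generate the same variety if and only if they satisfy exactly the same identities, to conclude that $\baxt$ and $\baxtn$ satisfy exactly the same identities for every $n \geq 2$. There is essentially no obstacle to this proof: all of the substantive work (constructing the maps $\vartheta_{ij}$, verifying they factor through the Baxter congruence via Corollary~\ref{corollary:baxter_precedence_characterization}, and showing that the product of the $\vartheta_{ij}$ separates Baxter classes by recovering contents together with left and right precedences) has already been carried out in the discussion preceding the theorem, so the proof itself is a short bookkeeping argument parallel to the sylvester and \#-sylvester cases.
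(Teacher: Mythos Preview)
Your proposal is correct and matches the paper's approach exactly: the paper also derives the theorem directly from the embeddings $\theta_n$ and $\theta$ into direct products of copies of $\baxt_2$, the fact that $\baxt_2$ is a submonoid of $\baxt$ and of each $\baxtn$ for $n\geq 3$, and Birkhoff's Theorem. All of this is laid out in the paragraphs immediately preceding the statement, so the theorem is indeed just the short bookkeeping conclusion you describe.
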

	
	The basis rank of $\mathbf{V}_{\baxt}$ is also $2$.
	
	
	\section{Identities and bases}
	\label{section:identities_and_bases}
	
	
	In this section, we obtain a complete characterization of the identities satisfied, respectively, by the sylvester, \#-sylvester and Baxter monoids, finite bases for $\mathbf{V}_{\sylv}$, $\mathbf{V}_{\sylvh}$ and $\mathbf{V}_{\baxt}$, and also their axiomatic rank. 
	
	\subsection{Characterization of the identities satisfied by the sylvester, \#\chyph sylvester and Baxter monoids}
    \label{subsection:identities}
	
	The identities satisfied by the sylvester, \#-sylvester and Baxter monoids and those satisfied by their respective monoids of rank $2$ are exactly the same. As such, we shall use the monoids of rank $2$ to obtain a characterization of those identities.
	
	For a word $u$ over an alphabet of variables $X$, and for variables $x,y \in supp(u)$, we denote the number of occurrences of $y$ before the first occurrence of $x$ in $u$, when reading $u$ from right-to-left (respectively, from left-to-right), by $o_{x \leftarrow y} (u)$ (respectively, $o_{y \rightarrow x} (u)$).
			
	\begin{theorem}
    \label{theorem:sylv_identities}
		The identities $u \approx v$ satisfied by $\sylv$ are exactly the balanced identities such that, for any variables $x,y \in supp(u \approx v)$, $o_{x \leftarrow y} (u) = o_{x \leftarrow y} (v)$.
	\end{theorem}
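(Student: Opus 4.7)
The plan is to exploit Theorem~\ref{theorem:sylv-id-sylv_2} to reduce everything to $\sylv_2$, where by Proposition~\ref{proposition:right_precedence_characterization} equality of elements amounts to equality of contents together with agreement on the unique possible right precedence, namely $2$-$1$.

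For the \emph{only if} direction, fix $x,y \in supp(u \approx v)$ with $x \neq y$ and consider the substitution $\psi : X \to \sylv$ sending $x \mapsto [1]_{\sylv}$, $y \mapsto [2]_{\sylv}$, and every other variable to the identity $[\varepsilon]_{\sylv}$. The values $\psi(u)$ and $\psi(v)$ lie in the submonoid $\sylv_2$ of $\sylv$ and coincide by hypothesis; Proposition~\ref{proposition:right_precedence_characterization} then yields $|u|_x = |v|_x$ and $|u|_y = |v|_y$ (so, varying $x,y$, the identity is balanced) together with equality of the $2$-$1$ right precedence indices of the two images. Reading each image from right to left, the count of $2$'s before the first $1$ is exactly $o_{x \leftarrow y}$ of the original word, whence $o_{x \leftarrow y}(u) = o_{x \leftarrow y}(v)$. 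The case $x=y$ reduces to $0 = 0$.

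For the \emph{if} direction, suppose $u \approx v$ is balanced and satisfies the $o$-condition. By Theorem~\ref{theorem:sylv-id-sylv_2}, it suffices to show $\sylv_2$ satisfies $u \approx v$. Fix any substitution $\psi : X \to \sylv_2$, pick representatives $w_x \in \{1,2\}^*$ of each $\psi(x)$, and let $\hat u$ (respectively $\hat v$) be the word obtained from $u$ (respectively $v$) by replacing each occurrence of a variable $x$ with $w_x$. The contents of $\hat u$ and $\hat v$ coincide because $u \approx v$ is balanced. Partition the variables into $V_0 = \{x : w_x = \varepsilon\}$, $V_1 = \{x : w_x \text{ contains a } 1\}$ and $V_2 = \{x : w_x \in \{2\}^+\}$. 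If $supp(u) \cap V_1 = \emptyset$ then neither $\hat u$ nor $\hat v$ contains a $1$, and neither has a $2$-$1$ right precedence, so Proposition~\ref{proposition:right_precedence_characterization} finishes the argument.

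Otherwise, let $x^*$ be the variable of $V_1 \cap supp(u)$ whose last occurrence in $u$ is the rightmost. Since $o_{y \leftarrow z}(u) > 0$ is equivalent to the last occurrence of $z$ lying strictly to the right of that of $y$, the $o$-hypothesis transports the relative order of last occurrences of variables from $u$ to $v$; in particular $x^*$ is also the variable of $V_1$ with rightmost last occurrence in $v$. No $V_1$-variable occurs strictly to the right of the last occurrence of $x^*$ in $u$, so the rightmost $1$ of $\hat u$ sits within the copy of $w_{x^*}$ corresponding to that occurrence, and the index of the $2$-$1$ right precedence of $\hat u$ equals
\[
    N(u) = r_{x^*} + \sum_{y \in V_2} |w_y|_2 \cdot o_{x^* \leftarrow y}(u),
\]
where $r_{x^*}$ is the number of $2$'s in $w_{x^*}$ after its last $1$. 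An identical formula describes $N(v)$, and the $o$-hypothesis gives $N(u) = N(v)$ term by term; hence $\hat u$ and $\hat v$ share the same $2$-$1$ right precedence (or both lack one, when the common value is $0$), so $\hat u \equiv_{\sylv_2} \hat v$ by Proposition~\ref{proposition:right_precedence_characterization}. The main technical point is the identification $x^*(u) = x^*(v)$ together with the closed-form formula for $N(u)$: both hinge on the observation that positions strictly right of the last occurrence of $x^*$ carry only variables in $V_0 \cup V_2$, which is precisely what lets the $o$-hypothesis control $N$.
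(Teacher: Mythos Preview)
Your proof is correct and follows essentially the same approach as the paper's. The ``only if'' direction is identical (substitute $x\mapsto 1$, $y\mapsto 2$, everything else to $\varepsilon$), and for the ``if'' direction both arguments isolate the rightmost variable whose image contains a $1$ (your $x^*$, the paper's $z$) and use the $o$-hypothesis to show the suffix after its last occurrence has the same content on both sides; you package this as a direct computation of the precedence index via your formula for $N(u)$, whereas the paper runs the same observation as a proof by contradiction.
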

	
	\begin{proof}		
		We first prove by contradiction that an identity satisfied by $\sylv_2$ must satisfy the previously mentioned conditions. Suppose $u \approx v$ is an identity satisfied by $\sylv_2$. Since $\sylv_2$ contains the free monogenic submonoid, we know that any identity satisfied by $\sylv_2$ must be a balanced identity. Thus, we assume $u \approx v$ is a balanced identity. 
		
		Suppose, in order to obtain a contradiction, that there exist variables $x,y \in supp(u \approx v)$, such that $o_{x \leftarrow y} (u) \neq o_{x \leftarrow y} (v)$. Then, if we consider the words $u|_{x,y}$ and $v|_{x,y}$, obtained from $u$ and $v$, respectively, by eliminating every occurrence of a variable other than $x$ or $y$, we have that $u|_{x,y}$ admits the suffix $x y^{o_{x \leftarrow y} (u)}$ and $v|_{x,y}$ admits the suffix $x y^{o_{x \leftarrow y} (v)}$.
		
		Taking the evaluation $\psi$ of $X$ in $\sylv_2$ such that $\psi (x) = [1]_{\sylv_2}$, $\psi (y) = [2]_{\sylv_2}$ and $\psi (z) = [\varepsilon]_{\sylv_2}$, for all other variables $z \in X$, we have
		\begin{align*}
    		\psi (u) = \psi \left( u|_{x,y} \right) = [u']_{\sylv_2} \cdot [1 2^{o_{x \leftarrow y} (u)}]_{\sylv_2} & \quad \text{and} \\
    		\psi (v) = \psi \left( v|_{x,y} \right) = [v']_{\sylv_2} \cdot [1 2^{o_{x \leftarrow y} (v)}]_{\sylv_2}, &
		\end{align*}
		for some words $u', v' \in \aset_2^*$. Since $o_{x \leftarrow y} (u) \neq o_{x \leftarrow y} (v)$, we have that $\psi (u)$ and $\psi (v)$ cannot share a $2$-$1$ right precedence of the same index. Thus, by Proposition~\ref{proposition:right_precedence_characterization}, we have that $\psi (u) \neq \psi (v)$, which contradicts our hypothesis that $u \approx v$ is an identity.
			
		We now prove by contradiction that an identity which satisfies the previously mentioned conditions must also be satisfied by $\sylv_2$. Suppose that $u \approx v$ is a balanced identity, such that, for any variables $x,y \in supp(u \approx v)$, $o_{x \leftarrow y} (u) = o_{x \leftarrow y} (v)$. Suppose, in order to obtain a contradiction, that there is some evaluation $\psi$ of $X$ in $\sylv_2$ such that $\psi(u) \neq \psi(v)$.
		
		Notice that, since $u \approx v$ is a balanced identity, then $\psi(u)$ and $\psi(v)$ have the same content. As such, we have that $supp \left( \psi(u) \right) = supp \left( \psi(v) \right) = \{ 1,2 \}$, and, by Proposition~\ref{proposition:right_precedence_characterization}, either $\psi(u)$ and $\psi(v)$ have $2$-$1$ right precedences, of different indexes, or one of them has a $2$-$1$ right precedence and the other does not. 
		Assume, without loss of generality, that words in $\psi(u)$ admit a suffix of the form $1 2^a$, and words in $\psi(v)$ admit a suffix of the form $1 2^b$, for some $a,b \in \nset_0$ such that $a > b$. Notice that this assumption covers both the case where $\psi(v)$ has a right precedence and the case where it has not. Furthermore, the assumption implies that $\psi(u)$ has a $2$-$1$ right precedence of index $a$.
		
		Observe that $u$ must be of the form $u = u_1 z u_2$, with $z \in X$ and $u_2 \in X^*$, such that $\psi (u_2)$ has support $\{2\}$ and $\psi (z)$ has either support $\{1,2\}$ or support $\{1\}$. As such, $\psi (z u_2)$ has a $2$-$1$ right precedence of index $a$, the same as $\psi(u)$. Furthermore, notice that $z$ cannot occur in $u_2$. Thus, by our hypothesis, we have that $o_{z \leftarrow x} (u) = o_{z \leftarrow x} (v)$, for any variable $x \in supp(u_2)$. Therefore, $v$ must also be of the form $v = v_1 z v_2$, where $v_2$ has the same content as $u_2$. But this implies that $\psi (v_2)$ has support $\{2\}$, hence $\psi(z v_2)$ also has a $2$-$1$ right precedence of index $a$. Since $\psi(z v_2)$ must also have the same right precedence as $\psi(v)$, we have reached a contradiction. 
		
		Thus, there is no evaluation $\psi$ of $X$ in $\sylv_2$ such that $\psi(u) \neq \psi(v)$. Therefore, $u \approx v$ is an identity satisfied by $\sylv_2$.
		
		Since all identities satisfied by $\sylv$ must also be satisfied by $\sylv_2$, we obtain the stated result.
	\end{proof}
	
	By parallel reasoning, we can obtain the characterization of the identities satisfied by the \#-sylvester monoid:
	
	\begin{theorem}
    \label{theorem:sylvh_identities}
		The identities $u \approx v$ satisfied by $\sylvh$ are exactly the balanced identities such that, for any variables $x,y \in supp(u \approx v)$, $o_{x \rightarrow y} (u) = o_{x \rightarrow y} (v)$.
	\end{theorem}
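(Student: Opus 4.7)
The plan is to proceed by direct parallel to the proof of Theorem~\ref{theorem:sylv_identities}, with right precedences replaced by left precedences, right-to-left readings replaced by left-to-right readings, and Proposition~\ref{proposition:right_precedence_characterization} replaced by Proposition~\ref{proposition:left_precedence_characterization}. Since $\sylvh$ and $\sylvh_2$ satisfy exactly the same identities by Theorem~\ref{theorem:sylvh-id-sylvh_2}, it suffices to prove the characterization for $\sylvh_2$, and again as in the sylvester case the fact that $\sylvh_2$ contains a free monogenic submonoid immediately forces any satisfied identity to be balanced.

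For the forward direction, I would assume $u \approx v$ is a balanced identity satisfied by $\sylvh_2$ and, for contradiction, pick variables $x,y \in supp(u \approx v)$ with $o_{x \to y}(u) \ne o_{x \to y}(v)$. The evaluation needs to be the mirror of the one used in Theorem~\ref{theorem:sylv_identities}: set $\psi(y) = [1]_{\sylvh_2}$, $\psi(x) = [2]_{\sylvh_2}$, and $\psi(z) = [\varepsilon]_{\sylvh_2}$ for every other variable $z$. Then $\psi(u)$ and $\psi(v)$ are represented by $u|_{x,y}$ and $v|_{x,y}$, which admit prefixes $1^{o_{x \to y}(u)} 2$ and $1^{o_{x \to y}(v)} 2$ respectively; by Proposition~\ref{proposition:left_precedence_characterization} these classes carry $1$-$2$ left precedences of distinct indices, so $\psi(u) \ne \psi(v)$, a contradiction.

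For the backward direction, assume $u \approx v$ is balanced and satisfies the $o_{x \to y}$-matching condition for all variables, and suppose for contradiction that some evaluation $\psi: X \to \sylvh_2$ yields $\psi(u) \ne \psi(v)$. Since they share content, they share support, and by Proposition~\ref{proposition:left_precedence_characterization} their $1$-$2$ left precedences differ; WLOG $\psi(u)$ has index $a$ and $\psi(v)$ has index $b$ with $a > b$ (allowing $b = 0$ for no such precedence). Write $u = u_1 z u_2$ where $z$ is the first variable in $u$ whose $\psi$-image has $2$ in its support, so $\psi(u_1)$ has support $\subseteq \{1\}$, $\psi(z)$ has support $\{2\}$ or $\{1,2\}$, and $\psi(u_1 z)$ already carries the $1$-$2$ left precedence of index $a$ (concatenating $u_2$ on the right does not alter what appears before the first $2$). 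In particular, $z$ does not occur in $u_1$, so $o_{z \to x}(u)$ is just the number of $x$'s in $u_1$ for every $x$.

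The hypothesis $o_{z \to x}(u) = o_{z \to x}(v)$ for every $x \in supp(u \approx v)$ then forces the prefix $v_1$ of $v$ preceding the first occurrence of $z$ to have exactly the same content as $u_1$: for $x \in supp(u_1)$ both counts coincide, and for $x \notin supp(u_1) \cup \{z\}$ both counts are zero. Writing $v = v_1 z v_2$, the element $\psi(v_1)$ therefore has the same content as $\psi(u_1)$, hence support $\subseteq \{1\}$, and $\psi(v_1 z)$ has a $1$-$2$ left precedence of the same index $a$; since what follows $z$ again does not change the count of $1$'s before the first $2$, $\psi(v)$ itself has index $a$, contradicting $b < a$. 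The only genuinely non-mechanical step is the content-matching of $v_1$ and $u_1$ from the precedence hypothesis, and this is the mirror image of the corresponding argument for $u_2$ and $v_2$ in the proof of Theorem~\ref{theorem:sylv_identities}.
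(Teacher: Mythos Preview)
Your approach is exactly the parallel argument the paper intends (the paper itself only says ``by parallel reasoning''), but two index slips make the steps fail as written.

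In the forward direction, the evaluation should \emph{not} be mirrored. With your choice $\psi(y)=[1]$, $\psi(x)=[2]$, the word representing $\psi(u)$ has prefix $2^{\,o_{x\to y}(u)}1$, not $1^{\,o_{x\to y}(u)}2$; its $1$--$2$ left-precedence index is therefore $o_{y\to x}(u)$, which need not separate $u$ from $v$ under the assumption $o_{x\to y}(u)\ne o_{x\to y}(v)$. Keeping the same evaluation as in Theorem~\ref{theorem:sylv_identities}, namely $\psi(x)=[1]$ and $\psi(y)=[2]$, gives the prefix $1^{\,o_{x\to y}(u)}2$ you claim, and then Proposition~\ref{proposition:left_precedence_characterization} yields the contradiction.

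In the backward direction you want the number of $x$'s in $u_1$ (the prefix of $u$ before the first $z$), which is $o_{x\to z}(u)$, not $o_{z\to x}(u)$: the latter counts occurrences of $z$ before the first $x$ and is zero for every $x\in\mathrm{supp}(u_1)$ since $z\notin\mathrm{supp}(u_1)$. With $o_{x\to z}$ in place, the hypothesis $o_{x\to z}(u)=o_{x\to z}(v)$ for all $x$ does force $c(u_1)=c(v_1)$ exactly as you describe, and the rest of the argument is correct.
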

	
	From the previous two theorems, we can easily obtain the characterization of the identities satisfied by the Baxter monoid:
	
	\begin{theorem}
    \label{theorem:baxt_identities}
		The identities $u \approx v$ satisfied by $\baxt$ are exactly the balanced identities such that, for any variables $x,y \in supp(u \approx v)$, $o_{x \rightarrow y} (u) = o_{x \rightarrow y} (v)$ and $o_{x \leftarrow y} (u) = o_{x \leftarrow y} (v)$.
	\end{theorem}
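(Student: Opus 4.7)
The plan is to derive Theorem~\ref{theorem:baxt_identities} as a direct consequence of Theorems~\ref{theorem:sylv_identities} and \ref{theorem:sylvh_identities}, combined with Corollary~\ref{corollary:baxter_precedence_characterization}, which characterises $\equiv_{\baxt}$ in terms of content and both left and right precedences, and the natural homomorphisms from $\baxt$ onto $\sylv$ and $\sylvh$ noted in Subsection~\ref{subsection:baxt}. For the forward direction, both $\sylv$ and $\sylvh$ lie in $\mathbf{V}_{\baxt}$ since they are homomorphic images of $\baxt$, so any identity $u \approx v$ satisfied by $\baxt$ is also satisfied by each of them. Applying Theorem~\ref{theorem:sylv_identities} then yields that $u \approx v$ is balanced with $o_{x \leftarrow y}(u) = o_{x \leftarrow y}(v)$ for all variables $x,y \in supp(u \approx v)$, and Theorem~\ref{theorem:sylvh_identities} provides the analogous equalities for $o_{x \rightarrow y}$.

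For the reverse direction, suppose $u \approx v$ is a balanced identity satisfying both sets of precedence equalities; by Theorem~\ref{theorem:baxt-id-baxt_2}, it suffices to verify that $\baxt$ itself satisfies $u \approx v$. Fix any evaluation $\psi$ of the variables into $\baxt$ and let $w, w' \in \aset^*$ be representatives of $\psi(u)$ and $\psi(v)$, respectively. Composing $\psi$ with the natural homomorphism $\baxt \to \sylv$ produces an evaluation into $\sylv$; since the hypotheses of Theorem~\ref{theorem:sylv_identities} are met by assumption, that theorem yields $w \equiv_{\sylv} w'$, and so by Proposition~\ref{proposition:right_precedence_characterization} the words $w$ and $w'$ share the same content and the same right precedences. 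An identical argument via $\baxt \to \sylvh$, using Theorem~\ref{theorem:sylvh_identities} together with Proposition~\ref{proposition:left_precedence_characterization}, shows that $w$ and $w'$ also share the same left precedences. Corollary~\ref{corollary:baxter_precedence_characterization} therefore gives $w \equiv_{\baxt} w'$, i.e.\ $\psi(u) = \psi(v)$ in $\baxt$.

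No serious obstacle arises: the entire argument is a clean assembly of the two previous theorems with the Baxter-precedence characterisation, and exploits exactly the interplay that defines $\equiv_{\baxt}$ as the intersection of $\equiv_{\sylv}$ and $\equiv_{\sylvh}$. The only point requiring mild care is to match supports correctly when invoking Theorems~\ref{theorem:sylv_identities} and \ref{theorem:sylvh_identities}, but this is automatic since $u \approx v$ is balanced, so $supp(u) = supp(v) = supp(u \approx v)$ and the variables appearing on each side agree.
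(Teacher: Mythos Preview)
Your proposal is correct and follows essentially the same approach as the paper: both directions use that $\sylv$ and $\sylvh$ are homomorphic images of $\baxt$ together with Theorems~\ref{theorem:sylv_identities} and \ref{theorem:sylvh_identities}, and the reverse direction reduces equality in $\baxt$ to simultaneous equality in $\sylv$ and $\sylvh$. The only cosmetic difference is that the paper invokes Giraudo's result \cite[Proposition~3.7]{giraudo_baxter2} directly, whereas you route through the precedence characterisations (Propositions~\ref{proposition:right_precedence_characterization}, \ref{proposition:left_precedence_characterization} and Corollary~\ref{corollary:baxter_precedence_characterization}), and your appeal to Theorem~\ref{theorem:baxt-id-baxt_2} is superfluous but harmless.
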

	
	\begin{proof}
        Let $u \approx v$ be an identity and let $\psi$ be an evaluation of $X$ in $\baxt$. Let $u',v' \in \aset^*$ be words such that $\psi(u) = \left[ u' \right]_{\baxt}$ and $\psi(v) = \left[ v' \right]_{\baxt}$. 
        
        Notice that, by composing $\psi$ with the natural homomorphisms of $\baxt$ into $\sylv$ and $\sylvh$, we obtain evaluations $\psi_1$ and $\psi_2$ of $X$ into $\sylv$ and $\sylvh$, respectively. Furthermore, notice that $\psi_1 (w) = \left[ w' \right]_{\sylv}$ and $\psi_2 (w) = \left[ w' \right]_{\sylvh}$, for $w \in \{ u,v \}$.
        
        Hence, by {\cite[Proposition~3.7]{giraudo_baxter2}}, we have that $\psi (u) = \psi (v)$ if and only if $\psi_1 (u) = \psi_1 (v)$ and $\psi_2 (u) = \psi_2 (v)$. As such, if $u \approx v$ is satisfied by both $\sylv$ and $\sylvh$, then it must also be satisfied by $\baxt$. 
        
        On the other hand, any identity satisfied by $\baxt$ must be satisfied by both $\sylv$ and $\sylvh$, since both these monoids are in the variety generated by $\baxt$. Thus, the result follows as a consequence of Theorems \ref{theorem:sylv_identities} and \ref{theorem:sylvh_identities}.
	\end{proof}

    Recall that two identities are equivalent if one can be obtained from the other by renaming variables or swapping both sides of the identities. With these characterizations, we recover the following corollaries:
    
	\begin{corollary}[{\cite[Proposition~20]{cm_identities}}]
	\label{corollary:cm_identities_sylv}
		The sylvester monoid satisfies the non-trivial identity $xyxy \approx yxxy$. Furthermore, up to equivalence, this is the shortest non-trivial identity satisfied by $\sylv$.
	\end{corollary}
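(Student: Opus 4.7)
The plan is to derive this corollary as a direct application of Theorem~\ref{theorem:sylv_identities}, first to verify that $xyxy\approx yxxy$ is indeed satisfied by $\sylv$, and then to rule out all shorter non-trivial identities by a finite case analysis. For the first claim, I would observe that $xyxy \approx yxxy$ is balanced with content $x^2y^2$ and support $\{x,y\}$, so the only precedence statistics to compare are $o_{x\leftarrow y}$ and $o_{y\leftarrow x}$. Reading each side from right to left, the first $x$ is preceded by exactly one $y$ in both $xyxy$ and $yxxy$, giving $o_{x\leftarrow y}(xyxy) = 1 = o_{x\leftarrow y}(yxxy)$; and the first $y$ (from the right) is the rightmost letter in each, preceded by no $x$, giving $o_{y\leftarrow x}(xyxy) = 0 = o_{y\leftarrow x}(yxxy)$. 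Theorem~\ref{theorem:sylv_identities} then yields that $\sylv$ satisfies the identity, which is non-trivial because $xyxy$ and $yxxy$ are distinct words.

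For minimality, I would argue that any non-trivial identity $u \approx v$ satisfied by $\sylv$ must be balanced (as noted in the proof of Theorem~\ref{theorem:sylv_identities}, since $\sylv$ contains a free monogenic submonoid), and then show that no such balanced identity of length at most $3$ can pass the right-precedence test of Theorem~\ref{theorem:sylv_identities} unless it is trivial. Length $1$ forces $u = v$. In length $2$, the only non-trivial balanced possibility (up to equivalence) is $xy \approx yx$, which fails because $o_{x\leftarrow y}(xy) = 1 \neq 0 = o_{x\leftarrow y}(yx)$.

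In length $3$, I would split by support size. Supports of size $1$ are trivial. For support $\{x,y\}$ of content $x^2y$ (the case $xy^2$ being symmetric), the three words $xxy, xyx, yxx$ yield the pairs $\bigl(o_{x\leftarrow y},o_{y\leftarrow x}\bigr)$ equal to $(1,0)$, $(0,1)$, $(0,2)$ respectively, hence are pairwise inequivalent under Theorem~\ref{theorem:sylv_identities}. For support $\{x,y,z\}$ of content $xyz$, the six permutations of $xyz$ can each be tagged with the tuple $\bigl(o_{x\leftarrow y},o_{x\leftarrow z},o_{y\leftarrow x},o_{y\leftarrow z},o_{z\leftarrow x},o_{z\leftarrow y}\bigr)$; a direct computation shows these six tuples are pairwise distinct, so again no pair satisfies the Theorem~\ref{theorem:sylv_identities} condition. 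Hence no non-trivial balanced identity of length at most $3$ holds in $\sylv$, establishing minimality.

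The main potential obstacle is bookkeeping in the length-$3$, three-variable case, where there are six words and fifteen pairs; but since distinct permutations of three distinct letters always disagree on at least one of the six $o_{a\leftarrow b}$ statistics (each permutation is already determined by the relative position of each pair of variables), this step collapses to a routine tabulation rather than a genuine difficulty. No deeper structural argument is required beyond Theorem~\ref{theorem:sylv_identities}.
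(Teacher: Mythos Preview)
Your approach is correct and matches the paper's intent: the paper states this result as an immediate corollary of Theorem~\ref{theorem:sylv_identities} without giving an explicit proof, and your verification that $xyxy\approx yxxy$ satisfies the right-precedence condition, together with your case analysis ruling out lengths $\leq 3$, is exactly the argument implicitly intended.

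There is, however, a small gap. The phrase ``up to equivalence, this is \emph{the} shortest non-trivial identity'' (compare the parallel phrasing in Corollary~\ref{corollary:cm_identities_baxt}, where two inequivalent shortest identities are explicitly listed) indicates that uniqueness up to equivalence is part of the claim, not merely that length $4$ is minimal. You have shown that no non-trivial identity of length at most $3$ holds, but not that every length-$4$ non-trivial identity satisfied by $\sylv$ is equivalent to $xyxy\approx yxxy$. To close this, run the same right-precedence tabulation over balanced words of length $4$: for contents $x^4$, $x^3y$, $x^2yz$, and $xyzt$ the $o_{a\leftarrow b}$ data distinguish all words (the argument is the same as your length-$3$ cases, since each variable other than possibly $x$ occurs once and its position is pinned down by the suffix contents); for content $x^2y^2$, the six words yield the pairs $(o_{x\leftarrow y},o_{y\leftarrow x})$ equal to $(2,0)$, $(1,0)$, $(0,1)$, $(1,0)$, $(0,1)$, $(0,2)$, so the only coincidences are $\{xyxy,yxxy\}$ and $\{xyyx,yxyx\}$, and these two identities are exchanged by the renaming $x\leftrightarrow y$. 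This extra paragraph is routine but needed for the full statement.
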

	
	\begin{corollary}[{\cite[Proposition~24]{cm_identities}}]
	\label{corollary:cm_identities_sylvh}
		The \#-sylvester monoid satisfies the non-trivial identity $yxyx \approx yxxy$. Furthermore, up to equivalence, this is the shortest non-trivial identity satisfied by $\sylvh$.
	\end{corollary}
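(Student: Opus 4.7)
The plan is to apply Theorem \ref{theorem:sylvh_identities} directly to both parts. For the satisfaction claim, I would verify that $yxyx \approx yxxy$ meets the criterion: both sides are balanced with two $x$'s and two $y$'s, and the only pairs of distinct variables to check are $(x,y)$ and $(y,x)$. Since each side begins with $y$, no $x$ occurs before the first $y$ on either side, so $o_{x \rightarrow y}(yxyx) = 0 = o_{x \rightarrow y}(yxxy)$; and each side has exactly one $y$ before its first $x$, so $o_{y \rightarrow x}(yxyx) = 1 = o_{y \rightarrow x}(yxxy)$. By Theorem \ref{theorem:sylvh_identities}, the identity holds; non-triviality is immediate as the two sides differ as words.

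For the minimality claim, I would enumerate by length. Any identity satisfied by $\sylvh$ must be balanced, so both sides use the same multiset of variables. In length $1$ or $2$, the only non-trivial balanced candidate (up to renaming and side-swapping) is $xy \approx yx$, which fails because $o_{y \rightarrow x}(xy) = 1 \neq 0 = o_{y \rightarrow x}(yx)$. In length $3$, the possibilities in two variables reduce (by symmetry between $x$ and $y$) to pairs among $xxy$, $xyx$, $yxx$; checking the three non-trivial pairs against the criterion of Theorem \ref{theorem:sylvh_identities}, each fails on at least one of $o_{x \rightarrow y}$ or $o_{y \rightarrow x}$ (for example, $xxy \approx xyx$ fails on $o_{y \rightarrow x}$, and $xxy \approx yxx$ fails on $o_{x \rightarrow y}$). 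Identities over a single variable are automatically trivial by balance. Hence no non-trivial identity of length at most $3$ can hold.

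An alternative, much shorter route uses the anti-isomorphism between $\sylv_2$ and $\sylvh_2$ mentioned after Proposition \ref{proposition:no_anti-isomorphism_sylv_sylvh}: under any anti-isomorphism, an identity $u \approx v$ satisfied by one monoid corresponds to the word-reversed identity $\tilde u \approx \tilde v$ satisfied by the other, and reversal preserves both length and the equivalence relation on identities. Since the reversal of $xyxy \approx yxxy$ is $yxyx \approx yxxy$, both the satisfaction and the minimality transfer from Corollary \ref{corollary:cm_identities_sylv} together with Theorems \ref{theorem:sylv-id-sylv_2} and \ref{theorem:sylvh-id-sylvh_2}. The only minor obstacle in either approach is the case enumeration at length $3$, but this is mechanical given Theorem \ref{theorem:sylvh_identities}.
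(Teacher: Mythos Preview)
Your direct approach via Theorem~\ref{theorem:sylvh_identities} matches the paper's intended route (the corollary is stated without proof, as an immediate consequence of the characterization), and your verification that $yxyx \approx yxxy$ satisfies the criterion is correct. However, the minimality argument as written is incomplete in two places. First, at length~$3$ you only treat the one- and two-variable cases; you omit the three-variable case where each variable occurs once (e.g.\ $xyz \approx xzy$), though this is easy to dispose of since any two distinct permutations differ in their leftmost letter, immediately violating some $o_{a\rightarrow b}$ condition. Second, and more importantly, the statement asserts that $yxyx \approx yxxy$ is \emph{the} shortest non-trivial identity up to equivalence, i.e.\ uniqueness at length~$4$ is part of the claim. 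Your direct argument stops after ruling out lengths~$\leq 3$ and never checks that every non-trivial length-$4$ identity satisfied by $\sylvh$ is equivalent to this one; this requires a short case analysis over contents $\{x,x,x,y\}$, $\{x,x,y,y\}$, $\{x,x,y,z\}$, $\{x,y,z,t\}$.

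Your alternative route via the anti-isomorphism $\sylv_2 \cong (\sylvh_2)^{\mathrm{op}}$ does close both gaps cleanly: reversal carries identities of $\sylv$ bijectively to identities of $\sylvh$, preserving length, non-triviality, and the equivalence relation, so both the satisfaction and the uniqueness-at-minimum-length transfer directly from Corollary~\ref{corollary:cm_identities_sylv} (using Theorems~\ref{theorem:sylv-id-sylv_2} and~\ref{theorem:sylvh-id-sylvh_2} to pass between rank~$2$ and arbitrary rank). This is a genuinely different and more economical argument than the direct enumeration; its cost is that it depends on the sylvester corollary already being established, whereas the direct approach is self-contained once Theorem~\ref{theorem:sylvh_identities} is in hand.
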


	\begin{corollary}[{\cite[Proposition~26]{cm_identities}}]
	\label{corollary:cm_identities_baxt}
		The Baxter monoid satisfies the non-trivial identities $yxxyxy \approx yxyxxy$ and $xyxyxy \approx xyyxxy$. Furthermore, up to equivalence, these are the shortest non-trivial identities satisfied by $\baxt$.
	\end{corollary}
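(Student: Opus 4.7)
The plan is to derive the corollary directly from Theorem~\ref{theorem:baxt_identities}. For the first part, I would tabulate, for both sides of each proposed identity, the four precedence counts $o_{x \rightarrow y}$, $o_{y \rightarrow x}$, $o_{x \leftarrow y}$, $o_{y \leftarrow x}$; a routine inspection produces the tuple $(0,1,0,1)$ for both $yxxyxy$ and $yxyxxy$, and the tuple $(1,0,0,1)$ for both $xyxyxy$ and $xyyxxy$. Since each identity is balanced, Theorem~\ref{theorem:baxt_identities} certifies that $\baxt$ satisfies them.

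For the minimality claim, suppose $u \approx v$ is a non-trivial identity satisfied by $\baxt$ with $|u| = |v| \leq 5$. Since $u \neq v$, and any word is recovered from its family of pair-projections (these encode the relative order of every pair of distinct letters, which together with the content reconstructs the word), there must exist variables $x, y \in supp(u \approx v)$ with $u|_{x,y} \neq v|_{x,y}$. Taking the evaluation $\psi : X \to \baxt_2$ with $\psi(x) = [1]_{\baxt_2}$, $\psi(y) = [2]_{\baxt_2}$, and $\psi(z) = [\varepsilon]_{\baxt_2}$ for all other variables $z$, we have that $\psi(u)$ and $\psi(v)$ are the classes in $\baxt_2$ of $u|_{x,y}$ and $v|_{x,y}$ (after the rename $x \mapsto 1$, $y \mapsto 2$). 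The identity forces $\psi(u) = \psi(v)$, so Theorem~\ref{theorem:baxt_identities} applied in rank $2$ forces the two-variable words $u|_{x,y}$ and $v|_{x,y}$ to share content and all four precedence counts. The argument thus reduces to the finite check that, for every content $x^a y^b$ with $a + b \leq 5$, the map from a word of that content to its $4$-tuple of precedence counts is injective.

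This injectivity enumeration is the main, though entirely routine, obstacle: for each such content one lists the at most $\binom{a+b}{a}$ candidate words and tabulates their tuples, verifying distinctness case by case. As a byproduct, extending the same enumeration to content $x^3 y^3$ of length $6$ exhibits exactly two coinciding pairs, namely $\{xyxyxy, xyyxxy\}$ and $\{xyxyyx, xyyxyx\}$; swapping $x$ with $y$ together with the two sides converts the second pair into $yxxyxy \approx yxyxxy$, so the two displayed identities exhaust the shortest non-trivial identities of $\baxt$ up to equivalence.
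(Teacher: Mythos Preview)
Your approach is sound and the verification that the two identities hold via Theorem~\ref{theorem:baxt_identities} is correct (though your displayed tuples have the third and fourth entries transposed: for $yxxyxy$ one gets $(0,1,1,0)$, not $(0,1,0,1)$; this does not affect the argument). The reduction of the minimality question to two-variable identities via the evaluation $\psi$ is also correct, and the claim that a word is determined by its content together with all two-letter projections is easily justified.

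There is, however, a genuine gap in your ``up to equivalence'' claim. You extend the enumeration only to content $x^3 y^3$, but a non-trivial two-variable identity of length~$6$ could a priori have content $x^4 y^2$ or $x^5 y^1$ (or the symmetric contents). You need to rule these out as well; either extend the enumeration, or observe directly that in any non-trivial identity $u \approx v$ satisfied by $\baxt$, writing $u = w\,x\,u'$ and $v = w\,y\,v'$ at the first point of disagreement, both $x$ and $y$ must occur in $w$ (by the left-precedence condition), in the other side's tail (by content), and again in the common suffix beyond (by the right-precedence condition), forcing each to occur at least three times. This slicker argument is exactly how the paper handles the general minimal-length question in Corollary~\ref{corollary:baxt_shortest_identity_n_variables}, and it dispenses with case-by-case tabulation entirely. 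The paper itself does not give a proof of the present corollary, treating it as an immediate consequence of Theorem~\ref{theorem:baxt_identities} and citing the original source; your enumeration route is more elementary but heavier, and once the missing contents are handled it does give the full statement.
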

	
	The following corollaries are useful alternative characterizations of the identities satisfied by $\sylv$, $\sylvh$ and $\baxt$. They state that, when reading both sides of an identity satisfied by the sylvester, \#-sylvester or Baxter monoid, the first occurrence of a variable is read at the same time in both words:
    
    \begin{corollary}
	\label{corollary:sylv_h_first_occurrence}
        The identities $u \approx v$ satisfied by $\sylv$ (respectively, $\sylvh$) are balanced identities such that, for any $x \in supp(u \approx v)$, the longest suffix (respectively, prefix) of $u$ where $x$ does not occur has the same content as the longest suffix (respectively, prefix) of $v$ where $x$ does not occur.
	\end{corollary}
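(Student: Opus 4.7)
The plan is to show that this corollary is simply a reformulation of Theorems~\ref{theorem:sylv_identities} and~\ref{theorem:sylvh_identities}, by unwinding the definitions of $o_{x \leftarrow y}$ and $o_{y \rightarrow x}$ in terms of the content of suitable suffixes (respectively, prefixes).

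I will treat the sylvester case; the \#-sylvester case is entirely dual. Fix a balanced identity $u \approx v$ with $supp(u \approx v) = supp(u) = supp(v)$, and fix a variable $x$ in this support. The longest suffix of $u$ in which $x$ does not occur is precisely the factor of $u$ consisting of all letters strictly to the right of the rightmost occurrence of $x$ in $u$. Reading $u$ from right-to-left, these are exactly the letters read before the first occurrence of $x$; hence for every variable $y$, the number of occurrences of $y$ in this longest $x$-free suffix is exactly $o_{x \leftarrow y}(u)$. Therefore the content of the longest $x$-free suffix of $u$ is the vector $\left( o_{x \leftarrow y}(u) \right)_{y \in supp(u \approx v)}$, and similarly for $v$.

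Consequently, the condition ``the longest suffix of $u$ where $x$ does not occur has the same content as the longest suffix of $v$ where $x$ does not occur, for every $x \in supp(u \approx v)$'' is literally equivalent to the condition ``$o_{x \leftarrow y}(u) = o_{x \leftarrow y}(v)$ for every pair $x, y \in supp(u \approx v)$'' appearing in Theorem~\ref{theorem:sylv_identities}. Combined with the balanced hypothesis, this gives the sylvester case. For the \#-sylvester case, one replaces ``rightmost'' by ``leftmost'' and ``right-to-left'' by ``left-to-right'', obtaining that the content of the longest $x$-free prefix of $u$ is the vector $\left( o_{y \rightarrow x}(u) \right)_{y \in supp(u \approx v)}$, and the stated condition becomes equivalent to the one in Theorem~\ref{theorem:sylvh_identities}. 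No serious obstacle arises, since the argument is purely a rewriting of definitions.
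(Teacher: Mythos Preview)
Your proof is correct and follows essentially the same approach as the paper: both arguments rest on the observation that the number of occurrences of $y$ in the longest $x$-free suffix of $u$ is precisely $o_{x \leftarrow y}(u)$, so the content condition in the corollary is a repackaging of the condition in Theorem~\ref{theorem:sylv_identities} (and dually for $\sylvh$). Your write-up is slightly more explicit than the paper's in spelling out that this yields a genuine equivalence rather than just one implication, but the underlying idea is identical.
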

	
	\begin{proof}
	    We give the proof for the $\sylv$ case. The reasoning for the $\sylvh$ case is parallel.
	
		Let $u = u_1 x u_2$ and $v= v_1 x v_2$, where $u_2$ and $v_2$ are words where $x$ does not occur. Notice that, due to Theorem~\ref{theorem:sylv_identities}, for any variable $y$ which occurs in $u_2$ or $v_2$, we have that $o_{x \leftarrow y} (u) = o_{x \leftarrow y} (v)$. The result follows immediately.
	\end{proof}
	
	\begin{corollary}
	\label{corollary:baxt_first_occurrence}
        The identities $u \approx v$ satisfied by $\baxt$ are balanced identities such that, for any $x \in supp(u \approx v)$, the longest prefix of $u$ where $x$ does not occur has the same content as the longest prefix of $v$ where $x$ does not occur, and the longest suffix of $u$ where $x$ does not occur has the same content as the longest suffix of $v$ where $x$ does not occur.
	\end{corollary}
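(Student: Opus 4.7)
The plan is to reduce this corollary to Theorem~\ref{theorem:baxt_identities} combined with Corollary~\ref{corollary:sylv_h_first_occurrence}, since the conditions in the statement are exactly the combination of the $\sylv$-condition and the $\sylvh$-condition already translated from ``occurrence counts'' to ``prefix/suffix content''.

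First, I would note that by Theorem~\ref{theorem:baxt_identities}, an identity $u \approx v$ is satisfied by $\baxt$ if and only if it is balanced and, for all variables $x,y \in supp(u \approx v)$, both $o_{x \leftarrow y}(u) = o_{x \leftarrow y}(v)$ and $o_{x \rightarrow y}(u) = o_{x \rightarrow y}(v)$ hold. Equivalently, $u \approx v$ is satisfied by $\baxt$ if and only if it is satisfied by both $\sylv$ and $\sylvh$.

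Next, I would invoke Corollary~\ref{corollary:sylv_h_first_occurrence}: the $\sylv$-condition is equivalent to saying that, for each variable $x \in supp(u \approx v)$, the longest suffix of $u$ in which $x$ does not occur has the same content as the longest suffix of $v$ in which $x$ does not occur; dually, the $\sylvh$-condition is equivalent to the corresponding statement for prefixes. Conjoining these two characterizations yields exactly the statement of the corollary.

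There is essentially no obstacle: the corollary is a direct consequence of Theorem~\ref{theorem:baxt_identities} and (both the sylvester and \#-sylvester cases of) Corollary~\ref{corollary:sylv_h_first_occurrence}. The only minor point to be careful about is that balancedness is preserved under both characterizations, so it transfers cleanly through the equivalence. Thus the proof reduces to a single sentence citing these two earlier results.
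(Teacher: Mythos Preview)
Your proposal is correct and matches the paper's own approach: the paper's proof consists of the single line ``The result follows from the previous corollary,'' i.e.\ from Corollary~\ref{corollary:sylv_h_first_occurrence}, with the reduction to the $\sylv$ and $\sylvh$ conditions via Theorem~\ref{theorem:baxt_identities} left implicit. You have simply made that reduction explicit.
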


	\begin{proof}
		The result follows from the previous corollary.
	\end{proof}

    These alternate characterizations allow us to obtain algorithms which check if identities are satisfied by the sylvester, \#-sylvester and Baxter monoids in polynomial time. For brevity's sake, we only show the algorithm for the sylvester case:

    \begin{algorithm}
        \KwIn{An identity $u \approx v$.}     
        \KwOut{True if $\sylv$ satisfies $u \approx v$, False otherwise.}
        \BlankLine
        \lIf{$|u| \neq |v|$}{\Return False;}
        $k \leftarrow |u|$;\;
        $C[1,\dots,k],D[1,\dots,k] \leftarrow [0,\dots,0]$; \;
        $\overleftarrow{s} \leftarrow \emptyset$; \;
        \For{$0 \leq i \leq k-1$}{
            \If{$u_{k-i} = v_{k-i}$}{
                \If{$u_{k-i} \notin supp\left(\overleftarrow{s}\right)$}{
                    \If{$C \neq D$}{\Return False;}
                    \Else{
                        append $u_{k-i}$ to $\overleftarrow{s}$; \;
                        $j \leftarrow |\overleftarrow{s}|$; \;
                        $C[j] \leftarrow C[j] + 1$;
                        $D[j] \leftarrow D[j] + 1$;
                    }
                }
                \Else{
                    $j \leftarrow$ index of $u_{k-i}$ in $\overleftarrow{s}$; \;
                    $C[j] \leftarrow C[j] + 1$;
                    $D[j] \leftarrow D[j] + 1$;
                }
            }
            \Else{
                \If{$u_{k-i},v_{k-i} \in supp\left(\overleftarrow{s}\right)$}{
                    $j \leftarrow$ index of $u_{k-i}$ in $\overleftarrow{s}$; \;
                    $l \leftarrow$ index of $v_{k-i}$ in $\overleftarrow{s}$; \;
                    $C[j] \leftarrow C[j] + 1$;
                    $D[l] \leftarrow D[l] + 1$;
                }
                \Else{\Return False;}
            }
        }
        \lIf{$C \neq D$}{\Return False;}
        \Return True
        \caption{\textit{Identity checking algorithm for the sylvester monoid}.}
        \label{alg_sylv_id}
    \end{algorithm}
    
    \begin{proposition}
        Algorithm~\ref{alg_sylv_id} is sound and complete, and has time complexity $\mathcal{O} (k^2 \log(k))$, where $k$ is the length of the word $u$, for input $u \approx v$.
    \end{proposition}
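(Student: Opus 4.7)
The plan is to prove soundness and completeness by maintaining a loop invariant that ties the data structures $\overleftarrow{s}$, $C$, $D$ to the suffix-content characterization of identities in Corollary~\ref{corollary:sylv_h_first_occurrence}. Specifically, I would show that, provided the algorithm has not yet returned \texttt{False} by the end of iteration $i$, the following holds: the suffixes $u_{k-i}\cdots u_k$ and $v_{k-i}\cdots v_k$ have the same support; $\overleftarrow{s}=(x_1,\ldots,x_m)$ lists the distinct variables occurring in that common support ordered by their rightmost occurrence (i.e.\ $x_1$ is the first distinct variable read from the right, $x_2$ the second, and so on); and $C[j]$ (resp.\ $D[j]$) equals the number of occurrences of $x_j$ in $u_{k-i}\cdots u_k$ (resp.\ $v_{k-i}\cdots v_k$). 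The invariant is preserved by a routine case analysis matching the three branches of the algorithm.

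For soundness and completeness, I would argue that by Corollary~\ref{corollary:sylv_h_first_occurrence}, $\sylv$ satisfies $u \approx v$ iff the identity is balanced and, for each $x \in supp(u\approx v)$, the longest suffix of $u$ in which $x$ does not occur has the same content as the longest suffix of $v$ in which $x$ does not occur. The length check at the start handles balance. The key observation is that the algorithm returns \texttt{False} in exactly three situations, each of which corresponds to a violation of this condition: (i) when $u_{k-i}\neq v_{k-i}$ and at least one of them is a variable not yet in $\overleftarrow{s}$ --- this records that a variable first appears from the right at different positions in $u$ and $v$, so the two suffixes disagree on support; (ii) when $u_{k-i}=v_{k-i}=x$ is newly introduced but the current arrays satisfy $C\neq D$ --- this records that the suffixes strictly to the right of this first occurrence of $x$, which are exactly the longest suffixes of $u$ and $v$ avoiding $x$, have different contents; (iii) the final check $C\neq D$, which tests whether the total contents of $u$ and $v$ agree (equivalently, the longest suffixes avoiding a fictitious next variable). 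Conversely, if none of these cases trigger, the invariant guarantees that for every $x$ the longest $x$-avoiding suffixes of $u$ and $v$ have identical content, hence by Corollary~\ref{corollary:sylv_h_first_occurrence} the identity is satisfied.

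For the complexity analysis, implementing $\overleftarrow{s}$ as a balanced binary search tree keyed by variable name (with the position of insertion stored as the associated index) makes the membership test ``$u_{k-i} \in supp(\overleftarrow{s})$'' and the ``index of $u_{k-i}$ in $\overleftarrow{s}$'' queries cost $\mathcal{O}(\log k)$ per iteration, for a total of $\mathcal{O}(k \log k)$ over the main loop. Each comparison $C\neq D$ scans arrays of length $k$; the non-trivial comparisons occur only when a new variable is appended to $\overleftarrow{s}$, which happens at most $k$ times, yielding $\mathcal{O}(k^2)$ total comparison cost, plus the final check. Summing these, and absorbing lower-order terms, gives the stated $\mathcal{O}(k^2 \log k)$ bound.

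The only genuinely delicate step will be the argument that case (i) truly signals a violation --- in particular showing that when $u_{k-i}$ and $v_{k-i}$ are distinct and one is new to $\overleftarrow{s}$, the suffix-content condition must fail. This reduces to noting that the ``newly introduced'' variable necessarily appears elsewhere (since the identity is balanced and $i<k$), so that the first rightmost occurrences of that variable in $u$ and in $v$ occur at different positions, and the shorter of the two avoidance-suffixes has strictly smaller support than the other. The remaining soundness/completeness steps and the complexity bound are then essentially bookkeeping.
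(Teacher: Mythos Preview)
Your proposal is correct and follows essentially the same approach as the paper: both arguments rest on Corollary~\ref{corollary:sylv_h_first_occurrence}, identify the same three \texttt{False}-returning branches as exactly the violations of the suffix-content condition, and obtain the $\mathcal{O}(k^2\log k)$ bound by charging the dominant cost to the at-most-$k$ array comparisons $C\neq D$. Your treatment is in fact somewhat more careful than the paper's (you make the loop invariant explicit and note that the $C\neq D$ test fires only when a new variable is introduced); the only minor imprecision is that in case~(i) you invoke balancedness before it has been established, but this is harmless since an unbalanced identity already fails in $\sylv$.
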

    
    \begin{proof}
        Algorithm~\ref{alg_sylv_id} first checks if $u$ and $v$ have the same length, in line 1. If they do not, then $u \approx v$ is not a balanced identity, and as such, is not satisfied by $\sylv$. This is done in $2k+1$ time, in the worst-case scenario where the length of $v$ is greater than the length of $u$.
                
        The algorithm scans $u$ and $v$, from right-to-left, in the `\textbf{for}' cycle in line 5. The arrays $C$ and $D$ stand for, respectively, the content vectors of the suffixes of $u$ and $v$ read so far, while the word $\overleftarrow{s}$ stands for the support of these suffixes. Notice that, since $u$ is of length $k$, then at most $k$ variables occur in $u$. Hence, $C$ and $D$ have length $k$.
        
        In each loop of the cycle, the algorithm checks if the letter which is being read in $u$ is the same as the one being read in $v$. If they are the same, and they do not occur in $\overleftarrow{s}$, this means that this is the first occurrence of a variable $x$. The algorithm checks if the arrays $C$ and $D$ are equal. If they are not, this implies that the longest suffix of $u$ where $x$ does not occur does not have the same content as the longest suffix of $v$ where $x$ does not occur. Hence, by Corollary~\ref{corollary:sylv_h_first_occurrence}, $u \approx v$ is not satisfied by $\sylv$. If $C$ and $D$ are equal, then the algorithm registers the new variable in $\overleftarrow{s}$ and updates the content vectors $C$ and $D$. On the other hand, if $x$ occurs already in $\overleftarrow{s}$, the algorithm simply updates $C$ and $D$.
        
        If the letters which are being read in $u$ and $v$ are different, then the algorithm checks if they both occur in $\overleftarrow{s}$. If that does not happen, then that means at least one of them is the first occurrence of a variable in one of the words, but not in the other. Hence, by Corollary~\ref{corollary:sylv_h_first_occurrence}, $u \approx v$ is not satisfied by $\sylv$. Otherwise, if they both occur in $\overleftarrow{s}$, the algorithm simply updates $C$ and $D$.
    
        It is clear that the algorithm is sound and complete, since it always detects when a new variable is read, if it is read at the same time in both $u$ and $v$, and if the content of the suffixes is the same.
        
        Taking into consideration that operations of addition and comparing numbers are logarithmic time in a Turing machine model, and that accessing coordinates of vectors is a linear-time operation, we have that comparing the content vectors $C$ and $D$ takes at most $\mathcal{O} \left(k \log(k)\right)$ time, and updating them takes $\mathcal{O} \left(k \log(k)\right)$ time as well. On the other hand, checking if a variable occurs $\overleftarrow{s}$ takes $\mathcal{O} (k)$ time. As such, each loop of the \textbf{for} cycle has time complexity $\mathcal{O} \left(k \log(k)\right)$. Since there are $k$ loops of the cycle, and no other part of the algorithm takes as much time as the cycle, we can conclude that Algorithm~\ref{alg_sylv_id} has time complexity $\mathcal{O} \left(k^2 \log(k)\right)$.
    \end{proof}
    
    \begin{corollary}
        The decision problem $\textsc{Check-Id}(\sylv)$ belongs to the complexity class \textbf{P}.
    \end{corollary}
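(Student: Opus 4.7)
The plan is immediate from the preceding proposition. The previous result establishes that Algorithm~\ref{alg_sylv_id} is a sound and complete decision procedure for whether $\sylv$ satisfies a given input identity $u \approx v$, and that it runs in time $\mathcal{O}(k^2 \log(k))$, where $k = |u|$. Since the size of the input instance is $|u| + |v|$, and the algorithm rejects in linear time if $|u| \neq |v|$, the running time is polynomial in the size of the input.

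Thus I would simply invoke the previous proposition: the existence of a deterministic polynomial-time algorithm for $\textsc{Check-Id}(\sylv)$ is, by definition, precisely what it means for the problem to belong to the complexity class \textbf{P}. No additional argument is needed, and there is no real obstacle — this is a formal consequence recorded as a corollary for emphasis, in the same spirit as the analogous observation made in the introduction for $\textsc{Check-Id}(\hypo)$.
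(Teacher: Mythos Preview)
Your proposal is correct and matches the paper's approach exactly: the paper states this corollary with no proof, treating it as an immediate consequence of the preceding proposition establishing a sound, complete, polynomial-time algorithm. Your explanation of why this suffices is precisely the intended reasoning.
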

    
    We can also construct an algorithm that checks if identities hold in $\sylvh$, with time complexity $\mathcal{O} (k^2 \log(k))$. From that algorithm and Algorithm~\ref{alg_sylv_id}, we can construct another algorithm for the Baxter case. As such, we also have the following corollary:
    
    \begin{corollary}
        The decision problems $\textsc{Check-Id}(\sylvh)$ and $\textsc{Check-Id}(\baxt)$ belong to the complexity class \textbf{P}.
    \end{corollary}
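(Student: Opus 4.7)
The plan is to essentially mirror the approach used for $\sylv$ in Algorithm~\ref{alg_sylv_id}, exploiting the dual/combined characterizations given in Corollaries~\ref{corollary:sylv_h_first_occurrence} and~\ref{corollary:baxt_first_occurrence}. Since $\textsc{Check-Id}(\sylv)$ was already placed in $\mathbf{P}$ by an $\mathcal{O}(k^2 \log(k))$ algorithm, it suffices to produce analogous polynomial-time algorithms for the other two monoids.

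First I would describe an algorithm for $\textsc{Check-Id}(\sylvh)$. By Corollary~\ref{corollary:sylv_h_first_occurrence}, the identities satisfied by $\sylvh$ are characterized in exactly the same way as those of $\sylv$, except that one tracks prefixes instead of suffixes. Concretely, one obtains a dual version of Algorithm~\ref{alg_sylv_id} by scanning the input words $u$ and $v$ from left-to-right rather than from right-to-left, maintaining content vectors $C$ and $D$ for the prefixes read so far, and a word $\overrightarrow{s}$ for the support of these prefixes. The conditions checked in each loop are identical, with the roles of suffixes replaced by prefixes; soundness and completeness follow from Corollary~\ref{corollary:sylv_h_first_occurrence} by the same reasoning as in the $\sylv$ case, and the time complexity analysis is unchanged, yielding $\mathcal{O}(k^2 \log(k))$.

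Next, for $\textsc{Check-Id}(\baxt)$, I would combine the two algorithms. By Corollary~\ref{corollary:baxt_first_occurrence}, an identity $u \approx v$ is satisfied by $\baxt$ if and only if it is balanced and, for every variable $x$, both the longest prefix and the longest suffix of $u$ avoiding $x$ have the same content as the corresponding prefix and suffix of $v$. Equivalently, $\baxt$ satisfies $u \approx v$ if and only if both $\sylv$ and $\sylvh$ satisfy it, which also follows from the fact (established in the proof of Theorem~\ref{theorem:baxt_identities}) that an identity holds in $\baxt$ precisely when it holds in $\sylv$ and $\sylvh$ simultaneously. Hence it suffices to run both algorithms sequentially on the input $u \approx v$ and return True if and only if both return True; the overall complexity is still $\mathcal{O}(k^2 \log(k))$.

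The main obstacle is essentially bookkeeping rather than substance: the $\sylv$ algorithm and its correctness proof have already been spelled out in detail, so the principal task is to verify that the dual statement for $\sylvh$ goes through without hidden asymmetries, and that the combination for $\baxt$ respects the ``if and only if'' given by \cite[Proposition~3.7]{giraudo_baxter2}. Once these are in place, the conclusion that both $\textsc{Check-Id}(\sylvh)$ and $\textsc{Check-Id}(\baxt)$ lie in $\mathbf{P}$ is immediate, since both algorithms run in time polynomial in the size of the input identity.
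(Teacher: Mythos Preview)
Your proposal is correct and follows essentially the same approach as the paper: the paper likewise observes that one can construct a dual $\mathcal{O}(k^2 \log(k))$ algorithm for $\sylvh$ and then combine it with Algorithm~\ref{alg_sylv_id} to handle $\baxt$. Your write-up is in fact more detailed than the paper's own justification, which consists of a brief remark to this effect.
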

    
	We also easily obtain some examples of important non-trivial identities satisfied by these monoids:
	
	\begin{example}
	\label{example:basis_identities}
		Consider the following non-trivial identities:
		\begin{align}
            xyzxty \approx yxzxty; \tag{L} \label{idL}\\
            xzxytx \approx xzyxtx; \tag{M} \label{idM}\\
            xzytxy \approx xzytyx. \tag{R} \label{idR}
        \end{align}
        
        The sylvester monoid satisfies \eqref{idL}, but satisfies neither \eqref{idM} nor \eqref{idR}, while the \#-sylvester monoid satisfies \eqref{idR}, but satisfies neither \eqref{idL} nor \eqref{idM}.
        
        The Baxter monoid satisfies the following non-trivial identities:
        \begin{align}
            xzyt \; xy \; rxsy \approx xzyt \; yx \; rxsy; \tag{O} \label{idO}\\
            xzyt \; xy \; rysx \approx xzyt \; yx \; rysx. \tag{E} \label{idE}
        \end{align}

	\end{example}
		
	The following corollaries will be important in the next subsection:
	
	\begin{corollary}
    \label{corollary:sylv_sylvh_shortest_identity_n_variables}
		The shortest non-trivial identities, with $n$ variables, satisfied by $\sylv$ or by $\sylvh$, are of length $n+2$.
	\end{corollary}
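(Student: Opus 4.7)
The plan is to establish matching upper and lower bounds on the length of the shortest non-trivial identity with $n$ variables satisfied by $\sylv$ and $\sylvh$. For the upper bound I would exhibit an explicit identity: for $\sylv$, the balanced identity
\[
x y z_1 \cdots z_{n-2} x y \approx y x z_1 \cdots z_{n-2} x y,
\]
which generalises the shortest non-trivial identity $xyxy \approx yxxy$ from Corollary~\ref{corollary:cm_identities_sylv}. It has exactly $n$ variables and length $n+2$, and Corollary~\ref{corollary:sylv_h_first_occurrence} makes the verification immediate: for each variable $w$, the longest suffix of the left-hand side avoiding $w$ has the same content as the corresponding suffix of the right-hand side. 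By the parallel reasoning available from Theorem~\ref{theorem:sylvh_identities}, the dual identity $y x z_1 \cdots z_{n-2} y x \approx y x z_1 \cdots z_{n-2} x y$ serves the same purpose for $\sylvh$.

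For the lower bound, suppose $u \approx v$ is a non-trivial identity of $\sylv$ on $n$ variables. Theorem~\ref{theorem:sylv_identities} forces it to be balanced, so $|u|=|v|$; it remains to rule out $|u|=n$ and $|u|=n+1$. If $|u|=n$, every variable occurs exactly once on each side, so for each pair $x \neq y$ the equality $o_{x \leftarrow y}(u) = o_{x \leftarrow y}(v)$ determines whether $y$ lies to the right of $x$. Hence $u$ and $v$ share the same relative order on every pair of letters, and therefore coincide as words, contradicting non-triviality. If $|u|=n+1$, exactly one variable, say $x$, appears twice and the remaining $n-1$ variables $y_1, \ldots, y_{n-1}$ each appear once. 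The conditions $o_{y_i \leftarrow y_j}(u) = o_{y_i \leftarrow y_j}(v)$ again fix a common relative order of the $y_i$'s, while for each $y_i$ the value $o_{y_i \leftarrow x}(u) \in \{0,1,2\}$ counts the $x$'s lying to the right of $y_i$ and must match in $v$. These counts pin down the number of $x$'s in each gap of the $y$-skeleton (before the leftmost $y$, between consecutive $y$'s, and after the rightmost $y$), forcing $u=v$ and yielding the desired contradiction. The same argument, read left-to-right via Theorem~\ref{theorem:sylvh_identities}, handles $\sylvh$.

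I do not expect any serious obstacle: the real work is already carried out by the characterisations in Theorems~\ref{theorem:sylv_identities} and~\ref{theorem:sylvh_identities}. The only step that requires a touch of care is the $|u|=n+1$ case, where one must confirm that the fixed relative order of the singly-occurring variables together with the counts of $x$'s to the right of each of them suffices to reconstruct the entire word uniquely.
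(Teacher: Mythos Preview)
Your argument is correct. The upper bound is essentially the same as the paper's: an explicit two-variable-core identity of length $n+2$ (the paper uses $xy\,a_1\cdots a_{n-2}\,yx \approx yx\,a_1\cdots a_{n-2}\,yx$, you use the suffix $xy$ instead of $yx$, but either works by Theorem~\ref{theorem:sylv_identities}).

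The lower bound, however, is obtained by a genuinely different route. The paper does not argue directly: it observes that $\hypo$ is a homomorphic image of $\sylv$, so every identity of $\sylv$ is an identity of $\hypo$, and then invokes the already-established fact from the companion paper on the hypoplactic monoid that the shortest non-trivial $n$-variable identity of $\hypo$ has length $n+2$. Your approach instead works entirely inside $\sylv$, using the characterization of Theorem~\ref{theorem:sylv_identities} to show by a short case analysis that a balanced identity of length $n$ or $n+1$ on $n$ variables is forced to be trivial. Your argument is self-contained and avoids the dependence on the hypoplactic result; the paper's argument is shorter because the work has been exported. The only delicate point in your version is the $|u|=n+1$ case, and your reconstruction of the word from the relative order of the singly-occurring variables together with the counts $o_{y_i \leftarrow x}$ is sound: these counts are weakly decreasing along the $y$-skeleton and their successive differences, together with the total of two, pin down the gap containing each occurrence of $x$.
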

	
	\begin{proof}
		Since any identity satisfied by $\sylv$ must also be satisfied by $\hypo$, and since we already know that the shortest non-trivial identity, with $n$ variables, satisfied by $\hypo$, is of length $n+2$ (see {\cite[Corollary~4.6]{cmr_hypo_id}}), then a non-trivial identity, with $n$ variables, satisfied by $\sylv$, must be of length at least $n+2$.
		
		On the other hand, by Theorem~\ref{theorem:sylv_identities}, it is immediate that
		\[
		x y a_1 \dots a_{n-2} y x \approx y x a_1 \dots a_{n-2} y x
		\]
		is an identity satisfied by $\sylv$, for variables $x,y, a_1, \dots, a_{n-2}$.
		
		The reasoning for identities satisfied by $\sylvh$ is parallel to the one given previously.
	\end{proof}
	
	\begin{corollary}
    \label{corollary:baxt_shortest_identity_n_variables}
		The shortest non-trivial identity, with $n$ variables, satisfied by $\baxt$, is of length $n+4$.
	\end{corollary}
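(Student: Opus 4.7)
The plan is to bound the length in both directions. I would first establish the upper bound of $n+4$ by exhibiting an explicit identity of that length, and then prove the matching lower bound by restricting any given non-trivial identity to its multiply-occurring variables and invoking Corollary~\ref{corollary:cm_identities_baxt}.

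For the upper bound I would verify the identity
\[
y x x y \, a_1 \cdots a_{n-2} \, x y \approx y x y x \, a_1 \cdots a_{n-2} \, x y
\]
in the $n$ distinct variables $x, y, a_1, \dots, a_{n-2}$. It has length $n+4$ and is non-trivial since the sides disagree in positions $3$ and $4$; it is a $\baxt$-identity by Corollary~\ref{corollary:baxt_first_occurrence}, a routine check since the only difference between the sides lies entirely between the identical $yx$-prefix and $a_1$, surrounded by letters that balance the prefix/suffix contents for every variable.

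For the lower bound, let $u \approx v$ be a non-trivial $\baxt$-identity in $n$ variables, let $X_+$ denote the set of variables occurring at least twice, and set $k := |X_+|$. Write $u|_{X_+}$ and $v|_{X_+}$ for the words obtained by erasing all singletons. I would first show that $u|_{X_+} \approx v|_{X_+}$ is itself non-trivial: if the two restrictions coincided, every singleton $z$ would fall into the same gap of the common skeleton in $u$ and in $v$ (its gap index equals the total number of $X_+$-letters to its left, which is determined by the values $o_{z \leftarrow w}(u) = o_{z \leftarrow w}(v)$ summed over $w \in X_+$, by Theorem~\ref{theorem:baxt_identities}), and the relative order of two singletons $z_1, z_2$ within a gap would be pinned down by $o_{z_1 \leftarrow z_2} \in \{0,1\}$; this would give $u = v$, contradicting non-triviality.

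Hence $u|_{X_+} \approx v|_{X_+}$ is a non-trivial $\baxt$-identity, so by Corollary~\ref{corollary:cm_identities_baxt} it has length at least $6$, with equality only when it involves exactly two variables. This yields: $k = 1$ is impossible (the restriction would be $x^m \approx x^m$); $k = 2$ forces restriction length $\geq 6$; $k = 3$ forces length $\geq 7$, since there is no length-$6$ identity in three variables; and $k \geq 4$ forces length $\geq 2k \geq 8$. In each case $|u| \geq$ restriction length $+ (n - k) \geq n + 4$. The only genuine obstacle is the positional-uniqueness argument in the third paragraph; its kernel is simply that, within a fixed common $X_+$-skeleton, the Baxter precedence data of Theorem~\ref{theorem:baxt_identities} forces every singleton into a uniquely determined position.
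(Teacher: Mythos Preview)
Your proof is correct, but the lower-bound argument takes a genuinely different route from the paper's.

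The paper argues directly at the first point of disagreement: writing $u = w x u'$ and $v = w y v'$ with $x \neq y$, it uses Theorem~\ref{theorem:baxt_identities} and Corollary~\ref{corollary:baxt_first_occurrence} to show that each of $x$ and $y$ must occur in $w$, in $u'$, and in $v'$, so each occurs at least three times; together with the remaining $n-2$ variables this gives length at least $n+4$. This is short and needs no case analysis.

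Your argument instead passes to the restriction $u|_{X_+} \approx v|_{X_+}$ obtained by erasing all singleton variables, proves it is still non-trivial (a nice positional-reconstruction argument from the precedence data), and then invokes Corollary~\ref{corollary:cm_identities_baxt} together with a case split on $k = |X_+|$. This is a bit longer and leans on the exact classification of the length-$6$ identities (to rule out a three-variable identity of length $6$), but it has the advantage of being more modular: the same skeleton-plus-singletons reduction would transfer to any monoid whose identity theory is determined by content together with ``first-occurrence'' data.

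Two cosmetic points: in your reconstruction step the sum $\sum_{w \in X_+} o_{z \leftarrow w}(u)$ counts the $X_+$-letters to the \emph{right} of $z$, not to the left (this does not affect the conclusion, since the skeleton length is fixed); and your upper-bound example differs from the paper's ($xy\,xy\,a_1\cdots a_{n-2}\,yx \approx xy\,yx\,a_1\cdots a_{n-2}\,yx$), but yours is equally valid.
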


	\begin{proof}
		It is immediate, by Theorem~\ref{theorem:baxt_identities}, that for variables $x,y, a_1 \dots a_{n-2}$, 
		\[
		x y \; x y \; a_1 \dots a_{n-2} y x \approx x y \; y x \; a_1 \dots a_{n-2} y x
		\]
		is an identity satisfied by $\baxt$. 
		
		On the other hand, let $u \approx v$ be a non-trivial identity, with $n$ variables, satisfied by $\baxt$, such that $u = w x u'$ and $v = w y v'$, for some words $w, u', v'$ over the alphabet of variables $X$.
		
		Observe that $y$ must occur in $w$, otherwise, we would have $o_{x \rightarrow y} (u) > o_{x \rightarrow y} (v)$. Furthermore, it must also occur in $u'$, since $u \approx v$ is a balanced identity, hence $c(x u') = c(y v')$. Similarly, $x$ must occur in both $w$ and $v'$. On the other hand, by Corollary~\ref{corollary:baxt_first_occurrence}, $x$ must occur in $u'$ and $y$ must occur in $v'$, since $|u'| = |v'|$. Therefore, $x$ and $y$ both occur at least three times each in $u$ and $v$. Since $u \approx v$ is an identity with $n$ variables, it must be of length at least $n+4$.
	\end{proof}
	
	Finally, we can also clarify the relation between the Baxter monoids and the plactic monoids:
	
	\begin{corollary}
	    The variety generated by $\baxt$ is strictly contained in the variety generated by $\plac_2$.
	\end{corollary}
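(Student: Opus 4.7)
The plan is to prove the claim in two steps: to establish the inclusion $\mathbf{V}_{\baxt} \subseteq \mathbf{V}_{\plac_2}$ by verifying that every identity satisfied by $\plac_2$ is also satisfied by $\baxt$, and then to exhibit a specific identity of $\baxt$ that is not satisfied by $\plac_2$, giving strict containment.

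For the inclusion, I would invoke the fact recorded in the introduction that the equational theory of $\mathbf{V}_{\plac_2}$ is both left and right 1-hereditary. Let $u \approx v$ be any identity satisfied by $\plac_2$. Since $\plac_2$ contains a free monogenic submonoid (for example, the one generated by $[1]_{\plac_2}$), the identity must be balanced. For each variable $x \in supp(u \approx v)$, left 1-hereditary produces an identity $u' \approx v'$ of $\plac_2$, where $u'$ and $v'$ are the longest prefixes of $u$ and $v$, respectively, in which $x$ does not occur; being itself an identity of $\plac_2$, this sub-identity is also balanced, so $u'$ and $v'$ share the same content. The right 1-hereditary property yields the analogous conclusion for the longest $x$-avoiding suffixes. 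This matches exactly the content-matching condition characterising Baxter identities in Corollary~\ref{corollary:baxt_first_occurrence}, so $u \approx v$ is satisfied by $\baxt$.

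For strictness, I would take the Baxter identity $xyxyxy \approx xyyxxy$ from Corollary~\ref{corollary:cm_identities_baxt} as the separating witness. To verify that $\plac_2$ does not satisfy it, substitute $x \mapsto [2]_{\plac_2}$ and $y \mapsto [1]_{\plac_2}$; the two sides then correspond to the words $212121$ and $211221$, whose Young tableaux under Schensted insertion have distinct shapes, namely $(3,3)$ and $(4,2)$. Hence $\plac_2$ does not satisfy this identity while $\baxt$ does, so the containment is strict.

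The only delicate point is the application of the 1-hereditary property: one must observe that the sub-identities produced are themselves satisfied by $\plac_2$ and hence balanced, so that their two sides match in content exactly as required by Corollary~\ref{corollary:baxt_first_occurrence}. Once this alignment is noted, the rest is bookkeeping and no substantive obstacle arises.
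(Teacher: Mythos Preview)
Your proof is correct. The inclusion argument is essentially the same as the paper's: both use the left and right 1-hereditary property of the equational theory of $\plac_2$ to deduce that the $x$-avoiding prefixes (and suffixes) of $u$ and $v$ have matching content, and then invoke the characterisation of Baxter identities---you via Corollary~\ref{corollary:baxt_first_occurrence}, the paper via the equivalent Theorem~\ref{theorem:baxt_identities}.

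For strictness, however, your route differs from the paper's. You exhibit the Baxter identity $xyxyxy \approx xyyxxy$ and verify by a direct Schensted computation that $\plac_2$ fails it (your shapes $(3,3)$ versus $(4,2)$ under $x\mapsto 2$, $y\mapsto 1$ are correct). The paper instead appeals to the external fact that the shortest non-trivial identity of the bicyclic monoid---and hence of $\plac_2$---is Adjan's identity of length~$10$, so $\plac_2$ cannot satisfy any length-$6$ identity such as those in Corollary~\ref{corollary:cm_identities_baxt}. Your argument is more self-contained and elementary, avoiding the citation of Adjan's theorem; the paper's argument is shorter to state once that result is granted, and gives a stronger quantitative separation (no identity of length $<10$ holds in $\plac_2$).
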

	
	\begin{proof}
	    Let $u \approx v$ be an identity satisfied by $\plac_2$. Thus, it must be a balanced identity. Let $x,y \in supp(u \approx v)$. Suppose, in order to obtain a contradiction, that $o_{x \rightarrow y} (u) > o_{x \rightarrow y} (v)$. Let
	    \[
	    u = u_1 \; y \; u_2 \quad \text{and} \quad v = v_1 \; y \; v_2,
	    \]
	    where $u_1$ (respectively, $v_1$) is the longest prefix of $u$ (respectively, $v$) where $y$ does not occur. Since the equational theory of the variety generated by $\plac_2$ is left 1-hereditary, then $u_1 \approx v_1$ must be satisfied by $\plac_2$. Hence, it must be a balanced identity. But $\left|u_1 \right|_x = o_{x \rightarrow y} (u) > o_{x \rightarrow y} (v) = \left|v_1 \right|_x$. We have reached a contradiction, hence, $o_{x \rightarrow y} (u) \not> o_{x \rightarrow y} (v)$.
	    
	    By this reasoning, we prove that $o_{x \rightarrow y} (u) = o_{x \rightarrow y} (v)$ and $o_{x \leftarrow y} (u) = o_{x \leftarrow y} (v)$. Hence, by Theorem~\ref{theorem:baxt_identities}, $u \approx v$ must be satisfied by $\baxt$. 
	    
	    On the other hand, it is well-known that the shortest non-trivial identity satisfied by the bicyclic monoid is Adjan's identity $xyyxxyxyyx \approx xyyxyxxyyx$ (see \cite{adjan_identity}). As such, $\plac_2$ does not satisfy any non-trivial identity of length less than $10$. But $\baxt$ satisfies an identity of length $6$, as seen in Corollary~\ref{corollary:cm_identities_baxt}. Thus, not all identities satisfied by $\baxt$ are satisfied by $\plac_2$.
	    
	    Therefore, as a consequence of Birkhoff's Theorem, the variety generated by $\baxt$ is strictly contained in the variety generated by $\plac_2$.	    
    \end{proof}
	
	
	\subsection{The axiomatic rank of the varieties generated by the sylvester, \#-sylvester and Baxter monoids}
    \label{subsection:basis_axiomatic_rank}
	
	Now, we prove that that the varieties generated by the sylvester, \#-sylvester and Baxter monoids have finite axiomatic rank and are finitely based. We give bases for $\mathbf{V}_{\sylv}$ and $\mathbf{V}_{\sylvh}$ with one identity each, of length $6$, over a four-letter alphabet. Trivially, these bases are minimal with regards to the number of identities in the basis; they are also minimal with regards to the number of variables occurring in these identities, and the length of these identities. We also give a basis for $\mathbf{V}_{\baxt}$, with two identities, of length $10$, over a six-letter alphabet. This basis is also minimal with regards to the number of identities in the basis, the number of variables occurring in these identities, and the length of these identities.
	
	Furthermore, we also prove that there exist no bases for $\mathbf{V}_{\sylv}$ or $\mathbf{V}_{\sylvh}$ with only identities over an alphabet with at most three variables, thus showing that the axiomatic rank of $\mathbf{V}_{\sylv}$ and $\mathbf{V}_{\sylvh}$ is $4$. We also prove that there exists no basis for $\mathbf{V}_{\baxt}$ with only identities over an alphabet with at most five variables, thus showing that the axiomatic rank of $\mathbf{V}_{\baxt}$ is $6$.
	
	The following theorem has been proven by different means in the upcoming paper \cite[Theorem~6.9]{kambites_johnson_cain_malheiro_preprint}.
	
	\begin{theorem}
	\label{theorem:sylv_finite_basis}
		$\mathbf{V}_{\sylv}$ admits a finite basis $\mathcal{B}_{\sylv}$, with the following identity:
		\begin{align}
            xyzxty \approx yxzxty. \tag{L}
        \end{align}
	\end{theorem}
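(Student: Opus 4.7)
The plan is to establish both directions: that identity \eqref{idL} belongs to the equational theory of $\sylv$, and that every identity satisfied by $\sylv$ is a consequence of \eqref{idL}. The first direction follows directly from Theorem~\ref{theorem:sylv_identities}: \eqref{idL} is balanced, and a short check shows that $o_{a \leftarrow b}$ agrees on both sides for every pair of variables $a, b$.

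For the converse, I would take an arbitrary identity $u \approx v$ satisfied by $\sylv$. By Theorem~\ref{theorem:sylv_identities} it is balanced with matching right-precedence counts. First I would observe that $u$ and $v$ must list their variables in the same order when read right-to-left: otherwise, letting $x_1$ be the rightmost variable of $u$ and $y$ be any variable preceding $x_1$ from the right in $v$, we would have $o_{x_1 \leftarrow y}(u) = 0 < o_{x_1 \leftarrow y}(v)$. Enumerating the variables as $x_1, x_2, \dots, x_n$ in this common right-to-left order of first appearance, I would decompose
\[
    u = A_n\, x_n\, A_{n-1}\, x_{n-1} \cdots A_1\, x_1, \qquad v = B_n\, x_n\, B_{n-1}\, x_{n-1} \cdots B_1\, x_1,
\]
where each displayed $x_i$ is the rightmost occurrence of $x_i$ in the corresponding word, so that $A_i, B_i \in \{x_1, \dots, x_i\}^*$. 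Applying Corollary~\ref{corollary:sylv_h_first_occurrence} to each $x_i$ yields $c(A_{i-1} x_{i-1} \cdots A_1 x_1) = c(B_{i-1} x_{i-1} \cdots B_1 x_1)$, and subtracting these telescopically together with the balancedness of $u \approx v$ gives $c(A_i) = c(B_i)$ for every $i$.

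The heart of the proof is then to use \eqref{idL} to rewrite each block $A_i$ into a canonical form depending only on its content. I would show that, for $p > q$, any adjacent factor $x_p x_q$ inside $A_i$ can be swapped to $x_q x_p$ by a single application of \eqref{idL}: substituting $x \mapsto x_p$, $y \mapsto x_q$, $z \mapsto \alpha$, $t \mapsto \beta$ turns \eqref{idL} into $x_p x_q\, \alpha\, x_p\, \beta\, x_q \approx x_q x_p\, \alpha\, x_p\, \beta\, x_q$, and since $p > q$ the rightmost occurrences of $x_p$ and $x_q$ in the ambient word both lie to the right of $A_i$, with the rightmost $x_p$ strictly preceding the rightmost $x_q$, so $\alpha$ can be taken as the factor from immediately after the swapped pair up to the rightmost $x_p$, and $\beta$ as the factor between that and the rightmost $x_q$. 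Such a swap preserves the rightmost-occurrence decomposition, so the required witnesses persist under iteration. Repeatedly applying these swaps brings $A_i$ to the unique sorted word $x_1^{\left|A_i\right|_{x_1}} x_2^{\left|A_i\right|_{x_2}} \cdots x_i^{\left|A_i\right|_{x_i}}$; performing the same reduction on each $B_i$ produces the same word, and concatenating the canonical blocks shows that $u \approx v$ is a consequence of \eqref{idL}.

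The principal difficulty is this bubble-sort step: only swaps that move smaller-index letters leftward are directly available from \eqref{idL}, so one must choose the sorting direction accordingly, and one must verify that the chosen witnesses continue to sit to the right of the block being sorted throughout the rewriting, which rests on the invariance of the block decomposition under internal swaps.
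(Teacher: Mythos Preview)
Your proposal is correct. Both proofs rest on the same basic move---using \eqref{idL} to commute two adjacent letters whenever each has a later occurrence with the right relative order---but they organise the argument differently. The paper proceeds by induction on the length of the non-common suffix: writing $u = u'xw$ and $v = v'yw$ with $x \neq y$, it locates the rightmost $y$ in $u'$ and bubbles it rightward past each intervening letter until $u$ acquires the longer common suffix $yw$, then recurses. You instead fix the decomposition of each word by the positions of rightmost occurrences, show that corresponding blocks have equal content, and sort every block to a canonical increasing form; since the sorted form depends only on content, both sides collapse to the same word. Your route has the pleasant by-product of exhibiting an explicit \eqref{idL}-normal form for each sylvester class, while the paper's inductive argument is marginally shorter and avoids having to argue that the block decomposition is preserved under internal swaps. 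One small remark: your caveat that ``only swaps that move smaller-index letters leftward are directly available'' is overly cautious---since \eqref{idL} is a two-sided identity, the swap $x_p x_q \leftrightarrow x_q x_p$ is available in both directions once the witnesses $\ldots x_p \ldots x_q$ exist on the right---but this does not affect the argument.
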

	
	\begin{proof}
		Let $\mathcal{B}_{\sylv}$ be the set of identities which contains only the identity~\eqref{idL}. Notice that this identity is given in Example \ref{example:basis_identities}.
		
		The following proof will be done by induction, in the following sense: We order identities by the length of the common suffix of both sides of the identity. The induction will be on the length of the prefix, that is, the length of the identity minus the length of the common suffix. 
		
		The base cases for the induction, for identities of length $n$ (with $n \geq 4$), are those identities of the form 
		\[
		x y w \approx y x w,
		\]
		where $w$ is a word of length $n-2$ and $x,y$ are variables. Observe that, since any identity $u \approx v$ satisfied by $\sylv$ is a balanced identity, there are no non-trivial identities, of length $n$, with a common prefix of length greater than $n-2$, satisfied by $\sylv$. Furthermore, $x$ and $y$ must both occur in $w$, otherwise, we would have $o_{x \leftarrow y} (xyw) > o_{x \leftarrow y} (yxw)$. Thus, $w$ is of the form
		\[
		w_1 x w_2 y w_3 \quad \text{or} \quad w_1 y w_2 x w_3,
		\]
		for some words $w_1, w_2, w_3$. Therefore, by replacing $z$ with $w_1$, and $t$ by $w_2$, and, if necessary, renaming $x$ and $y$, we can immediately deduce this identity from the identity \eqref{idL}. Notice that, when $n=4$, the base cases correspond to the identities given in Corollary~\ref{corollary:cm_identities_sylv}.
		
		The idea of the proof of the induction step is that, for any identity $u \approx v$, we can apply the identity \eqref{idL}, finitely many times, to deduce a new identity $u \approx u^*$ from $u \approx v$, such that $u^*$ is ``closer'' to $v$ than $u$, in the sense that $u^*$ and $v$ have a common suffix which is strictly longer than the common suffix of $u$ and $v$. Notice that $u^* \approx v$ is a consequence of $\mathcal{B}_{\sylv}$, by the induction hypothesis. As such, we can conclude that $u \approx v$ is a consequence of $\mathcal{B}_{\sylv}$.
		
		The technical part of the proof allows us to show that there is always a way to shuffle some variables of $u$ in such a way that we obtain $u^*$. We show that these variables must occur several times in $u$, thus allowing us to apply the identity \eqref{idL} to shuffle $u$ and obtain $u^*$.
		
		By the induction hypothesis, we know that $u^* \approx v$ is a consequence of $\mathcal{B}_{\sylv}$. Thus, we conclude that $u \approx v$ is also a consequence of $\mathcal{B}_{\sylv}$. 
		
		Let $u \approx v$ be a non-trivial identity satisfied by $\sylv$ and let $\mathcal{X}$ denote its support. Since $u \approx v$ is a non-trivial identity, we must have $u = u' x w$ and $v = v' y w$, for some words $w,u',v' \in \mathcal{X}^*$ and variables $x,y$ such that $x \neq y$. Notice that $u'$ and $v'$ cannot be the empty word, otherwise, we would have $u= x w$ and $v = y w$, which contradicts the fact that $c(u) = c(v)$.
		
		On the other hand, since $c(u' x) = c(v' y)$, we have that $y$ occurs in $u'$. Thus, to distinguish the rightmost $y$ in $u'$, we have that
		\[
		u'x = u_1 y a u_2,
		\]
		for some variable $a$ and words $u_1$ and $u_2$, such that $y$ does not occur in $u_2$ and $a \neq y$. Once again, since $c(u' x) = c(v' y)$, we have that $a$ occurs in $v'$. Thus, to distinguish the rightmost $a$ in $v'$, we have that 
		\[
		v' = v_1 a v_2,
		\]
		for some words $v_1$ and $v_2$, such that $a$ does not occur in $v_2$. To sum up, we have that 
		\[
		u = u_1 y a u_2 w \quad \text{and} \quad v = v_1 a v_2 y w,
		\]
		where $y$ does not occur in $u_2$ and $a$ does not occur in $v_2$.
		
		Notice that, by Theorem~\ref{theorem:sylv_identities}, $y$ must occur in $w$, otherwise, we would have $o_{y \leftarrow x} (u) > o_{y \leftarrow x} (v)$. Thus, $a$ must also occur in $w$, otherwise, we would have $o_{a \leftarrow y} (u) < o_{a \leftarrow y} (v)$. As such, we can deduce the word $u_1 a y u_2 w$, by applying the identity \eqref{idL}, renaming $x$ to $a$ and replacing $z$ and $t$ by the appropriate words.
		
		Observe that we can repeatedly apply this reasoning until we obtain a word of the form
		\[
		u^* = u'' y w,
		\]
		for some word $u''$, since the only restriction imposed on the variable $a$ was that $a \neq y$. Thus, we have proven that, for any non-trivial identity $u \approx v$ satisfied by $\sylv$, we can obtain a new word $u^*$ from $u$ such that the common suffix of $u^*$ and $v$ is strictly longer than the common suffix of $u$ and $v$, by applying the identity \eqref{idL} finitely many times. 
		By the induction method, we conclude that $u \approx v$ is a consequence of $\mathcal{B}_{\sylv}$, thus proving that $\mathcal{B}_{\sylv}$ is a basis for $\mathbf{V}_{\sylv}$.
	\end{proof}
		
	By the same reasoning, we can also prove the following result:
	
	\begin{theorem}
	\label{theorem:sylvh_finite_basis}
		$\mathbf{V}_{\sylvh}$ admits a finite basis $\mathcal{B}_{\sylvh}$, with the following identity:
		\begin{align}
            xzytxy \approx xzytyx. \tag{R}
        \end{align}
	\end{theorem}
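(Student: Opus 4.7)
The plan is to mirror the proof of Theorem~\ref{theorem:sylv_finite_basis} with the roles of ``right/suffix'' and ``left/prefix'' interchanged, exploiting the duality between the right-precedence characterization of $\sylv$ (Proposition~\ref{proposition:right_precedence_characterization}) and the left-precedence characterization of $\sylvh$ (Proposition~\ref{proposition:left_precedence_characterization}). We take $\mathcal{B}_{\sylvh}$ to consist only of the identity~\eqref{idR} from Example~\ref{example:basis_identities} and argue by induction on the length of the \emph{common prefix} of the two sides of a non-trivial balanced identity $u \approx v$ satisfied by $\sylvh$ (equivalently, a decreasing induction on the length of the differing suffix).

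For the base case --- identities of the form $w x y \approx w y x$ with $|w| = n-2$ and $x \neq y$ --- Corollary~\ref{corollary:sylv_h_first_occurrence} applied to each of $x$ and $y$ forces both variables to occur in $w$, so $w$ admits a factorization of the form $w_1 x w_2 y w_3$ or $w_1 y w_2 x w_3$; in either case the identity is a direct substitution instance of~\eqref{idR}, with the $n=4$ sub-case reproducing Corollary~\ref{corollary:cm_identities_sylvh}. For the inductive step, given a non-trivial $u \approx v$ with common prefix $\omega$, we write $u = \omega x u''$ and $v = \omega y v''$ with $x \neq y$, and locate the leftmost occurrence of $y$ in $x u''$ as $x u'' = U_1\, a\, y\, U_2$ where $U_1 a$ contains no $y$ and $a \neq y$. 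An analysis paralleling the one carried out for $\sylv$ --- using the precedence equalities provided by Theorem~\ref{theorem:sylvh_identities} --- will show that both $y$ and $a$ must occur inside $\omega$ in a compatible relative position. This is precisely the information needed to apply~\eqref{idR} with substitutions $x \mapsto a$, $y \mapsto y$, and $z, t$ chosen to match the surrounding letters, swapping the adjacent factor $ay$ at positions $|\omega|+|U_1|+1, |\omega|+|U_1|+2$ into $ya$ and shifting the leftmost $y$ one step to the left. Iterating finitely many times drives this $y$ to position $|\omega|+1$: the final swap, occurring when $U_1 = \varepsilon$ and $a = x$, replaces the initial $xy$ by $yx$ and extends the common prefix by the letter $y$, so the inductive hypothesis applies.

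The main obstacle, just as in Theorem~\ref{theorem:sylv_finite_basis}, is verifying that~\eqref{idR} is genuinely applicable at each intermediate step: matching the substituted pattern $\alpha Z \beta T \alpha \beta$ (with $\alpha = a$, $\beta = y$) against $u$ demands an extra $a$ and an extra $y$ inside $\omega U_1$ strictly to the left of the $ay$ being swapped, with the $a$ preceding the $y$. Since $U_1$ contains no $y$ by construction, any such $y$ must lie inside $\omega$, so what really needs to be checked is that some $a$ in $\omega U_1$ precedes some $y$ in $\omega$. This will follow from the equalities $o_{z \to y}(u) = o_{z \to y}(v)$ supplied by Theorem~\ref{theorem:sylvh_identities}, exactly mirroring how the right-precedence equalities drove the corresponding verification in the sylvester case. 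Once this is in place, the induction delivers $u \approx v$ as a consequence of~\eqref{idR}, and hence $\mathcal{B}_{\sylvh}$ is a finite basis for $\mathbf{V}_{\sylvh}$.
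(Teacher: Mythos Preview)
Your plan is exactly the dual of the paper's proof of Theorem~\ref{theorem:sylv_finite_basis}, and the paper's own proof of this theorem says nothing more than that. One step, however, does not go through as written: you commit to applying~\eqref{idR} with the substitution $x \mapsto a$, $y \mapsto y$, which requires an occurrence of $a$ preceding an occurrence of $y$ inside $\omega$, and you assert this order will follow from the equalities $o_{z \to y}(u) = o_{z \to y}(v)$. It does not: once $y$ and $a$ both lie in the common prefix $\omega$, those equalities hold trivially (the leftmost $y$ and the leftmost $a$ sit inside $\omega$, identically for $u$ and $v$) and carry no information about the relative order of $a$ and $y$ within $\omega$. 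A concrete obstruction is the valid $\sylvh$-identity $u = yaay \approx yaya = v$: here $\omega = ya$, $U_1 = \varepsilon$, the letter $a$ coincides with $x$, and the only $a$ in $\omega$ sits to the \emph{right} of the only $y$, so your chosen pattern $a\cdots y \cdots ay$ cannot be matched inside $u$.

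The fix is the obvious one (and the paper's own proof of the $\sylv$ case glosses over the very same point when it writes ``renaming $x$ to $a$''): having established that both $a$ and $y$ occur in $\omega$, one of them precedes the other, and~\eqref{idR} applies either with $x \mapsto a$, $y \mapsto y$ (when some $a$ precedes some $y$ in $\omega$) or with $x \mapsto y$, $y \mapsto a$ (when some $y$ precedes some $a$). In the example above the second substitution, with $z = t = \varepsilon$, gives precisely $yaya \approx yaay$. With this amendment your argument is correct and coincides with the paper's.
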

	
	\begin{proof}
	    The proof follows the same reasoning as the proof of Theorem~\ref{theorem:sylv_finite_basis}, the main difference being that, within a set of identities of the same length, they are ordered on the length of the common prefix of both sides of the identity. Therefore, the induction is on the length of the suffix. The induction step resorts to Theorem~\ref{theorem:sylvh_identities}.
	\end{proof}

	We also use the same reasoning to prove the following theorem:
	
	\begin{theorem}
	\label{theorem:baxt_finite_basis}
		$\mathbf{V}_{\baxt}$ admits a finite basis $\mathcal{B}_{\baxt}$, with the following identities:
        \begin{align}
            xzyt \; xy \; rxsy \approx xzyt \; yx \; rxsy; \tag{O} \\
            xzyt \; xy \; rysx \approx xzyt \; yx \; rysx. \tag{E}
        \end{align}
	\end{theorem}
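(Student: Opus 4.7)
The plan is to show that the set $\mathcal{B}_{\baxt}$ consisting of the identities~\eqref{idO} and~\eqref{idE} is a basis for $\mathbf{V}_{\baxt}$. The fact that both identities lie in the equational theory of $\baxt$ is already recorded in Example~\ref{example:basis_identities} (and is immediate from Theorem~\ref{theorem:baxt_identities}); what remains is to show that every identity satisfied by $\baxt$ is a consequence of $\mathcal{B}_{\baxt}$. The argument parallels those of Theorems~\ref{theorem:sylv_finite_basis} and~\ref{theorem:sylvh_finite_basis}, but must now accommodate the two-sided (prefix and suffix) conditions of Theorem~\ref{theorem:baxt_identities}. A preliminary observation, obtained by suitable substitutions into~\eqref{idO} and~\eqref{idE}, is that $\mathcal{B}_{\baxt}$ allows us to derive any identity of the form $p_1 \alpha \beta p_2 \approx p_1 \beta \alpha p_2$ (with $\alpha \neq \beta$) whenever both $\alpha$ and $\beta$ occur in each of $p_1$ and $p_2$: one uses~\eqref{idO} when the relative order of $\alpha, \beta$ in $p_1$ matches that in $p_2$, and~\eqref{idE} otherwise. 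Hence it suffices to show that any non-trivial Baxter identity $u \approx v$ can be reduced to a trivial one by a sequence of adjacent swaps satisfying this two-sided occurrence condition.

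Write $u = w_L u_0 w_R$ and $v = w_L v_0 w_R$, with $w_L, w_R$ the longest common prefix and suffix. I induct on $|u_0|$. The cases $|u_0|=0$ (trivial), $|u_0|=1$ (excluded by balance), and $|u_0|=2$ (a single swap, since by Corollary~\ref{corollary:baxt_first_occurrence} both letters of $u_0$ occur in $w_L$ and in $w_R$) form the base. For the induction step, let $b$ be the first letter of $v_0$, and factor $u_0 = w_1 b w_2$, where $w_1 = q_1 \cdots q_k$ is the longest $b$-free prefix of $u_0$. The plan is to transform $u$ into $u^{*} := w_L b w_1 w_2 w_R$ by a sequence of $k$ adjacent swaps, each moving the distinguished $b$ one position to the left past the current preceding letter $q_j$. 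Since $u^{*}$ agrees with $v$ on a prefix of length at least $|w_L|+1$, the middle part shrinks strictly, and the induction hypothesis applied to $u^{*} \approx v$ closes the argument.

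Each swap $q_j b \to b q_j$ is justified by~\eqref{idO} or~\eqref{idE} as soon as both $q_j$ and $b$ occur in the left context and in the right context of the swap. Three of the four required conditions can be verified relatively directly: $b$ lies in the left context because applying Corollary~\ref{corollary:baxt_first_occurrence} to the variable $b$ forces $b \in w_L$ (otherwise the longest $b$-free prefix of $u$ would strictly contain $w_L$, contradicting that condition); an induction on the position within $w_1$, using the prefix content condition of Corollary~\ref{corollary:baxt_first_occurrence} together with the fact that $v_0$ starts with $b$, shows that every letter of $w_1$, and in particular $q_j$, also lies in $w_L$; and a content-counting argument combining the suffix condition for $b$ with balance forces $b$ to occur in $w_2 w_R$ (otherwise the suffix condition would force $w_1$ to be empty).

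The main obstacle is the remaining condition, namely that $q_j$ itself reappears in the current right context $q_{j+1} \cdots q_k w_2 w_R$. I intend to verify this by contradiction: if $q_j$ occurred neither among $q_{j+1}, \ldots, q_k$ nor in $w_2$ nor in $w_R$, then the rightmost $q_j$ in $u$ would sit at position $|w_L|+j$, and computing the contents of the longest $q_j$-free suffixes of $u$ and of $v$, together with the balance equality $c(u) = c(v)$, would force the length-$(j-1)$ prefix of $v_0$ to be a permutation of $q_1, \ldots, q_{j-1}$, contradicting the fact that $v_0$ starts with $b \notin \{q_1, \ldots, q_{j-1}\}$. This two-sided content-counting is the one place where the full Baxter identity characterization (both prefix and suffix conditions simultaneously) is genuinely needed, and is what distinguishes the argument from the sylvester and \#-sylvester cases.
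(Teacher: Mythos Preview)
Your argument is correct and follows essentially the same strategy as the paper: reduce an arbitrary Baxter identity to a trivial one by a sequence of adjacent transpositions, each justified by \eqref{idO} or \eqref{idE} once one verifies that both letters being swapped occur in the left context and in the right context, these occurrences being forced by the prefix/suffix content conditions of Theorem~\ref{theorem:baxt_identities} (equivalently Corollary~\ref{corollary:baxt_first_occurrence}).

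The only organizational difference is a mirror-image one. The paper tracks just the common suffix $w$ (writing $u=u'xw$, $v=v'yw$), locates the rightmost $y$ in $u'$, and pushes it rightward one step at a time, so each inductive step strictly lengthens the common suffix; the required ``left'' occurrences of the swapped letters are then extracted via the $o_{y\to a}$ prefix condition. You instead track both the common prefix $w_L$ and the common suffix $w_R$, locate the first letter $b$ of $v_0$, and push it leftward through $w_1$, so each step strictly lengthens the common prefix. Your bookkeeping is slightly cleaner (the ``$b\in w_L$'' and ``$q_j\in w_L$'' facts follow directly from the prefix condition applied to a word already factored as $w_L u_0 w_R$), while the paper's is closer in form to its sylvester proof. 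The delicate point in both proofs is the same: showing that the ``other'' letter reappears in the far context (your Condition~4, the paper's ``$a$ occurs in $u_1$''), and both settle it by the same content-counting contradiction using the two-sided characterization.
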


	\begin{proof}
        The proof follows the same reasoning as the proof of Theorem~\ref{theorem:sylv_finite_basis}. As such, we only give the reasoning for the base cases and the induction step. 
        
        The base cases for the induction on the length of the prefix, for identities of length $n$ (with $n \geq 6$), are those identities of the form 
		\[
		x y x y w \approx x y y x w,
		\]
		where $w$ is a word of length $n-4$ and $x,y$ are variables. Notice that both sides of the identity must have a prefix of the form $xy$, due to Corollary~\ref{corollary:baxt_first_occurrence}. By the same reason, observe that, since any identity $u \approx v$ satisfied by $\baxt$ is a balanced identity, there are no non-trivial identities, of length $n$, with a common prefix of length greater than $n-4$, satisfied by $\baxt$. Furthermore, $x$ and $y$ must both occur in $w$, otherwise, we would have $o_{x \leftarrow y} (xyw) > o_{x \leftarrow y} (yxw)$. Thus, $w$ is of the form
		\[
		w_1 x w_2 y w_3 \quad \text{or} \quad w_1 y w_2 x w_3,
		\]
		for some words $w_1, w_2, w_3$. Therefore, by replacing $z$ and $t$ with the empty word, $r$ with $w_1$, and $s$ by $w_2$, and, if necessary, renaming $x$ and $y$, we can immediately deduce this identity from the identity \eqref{idO} or the identity \eqref{idE}, depending on the form of $w$. Notice that, when $n=6$, the base cases correspond to the identities given in Corollary~\ref{corollary:cm_identities_baxt}.

		Let $u \approx v$ be a non-trivial identity satisfied by $\baxt$ and let $\mathcal{X}$ denote its support. Since $u \approx v$ is a non-trivial identity, we must have $u = u' x w$ and $v = v' y w$, for some words $w,u',v' \in \mathcal{X}^*$ and variables $x,y$ such that $x \neq y$. By Corollary~\ref{corollary:baxt_first_occurrence}, we have that $x$ must occur at least once in $u'$ and $w$ and at least twice in $v'$, and $y$ must occur at least twice in $u'$ and at least once in $v'$ and $w$. Thus, to distinguish the leftmost $y$ in $u'$, we have that
		\[
		u'x = u_1 y a u_2
		\]
		for some variable $a$ and words $u_1$ and $u_2$, such that $y$ does not occur in $u_2$ and $a \neq y$. Notice that $y$ must occur in $u_1$. Since $c(u' x) = c(v' y)$, we have that $a$ occurs in $v'$. Thus, to distinguish the rightmost $a$ in $v'$, we have that 
		\[
		v' = v_1 a v_2,
		\]
		for some words $v_1$ and $v_2$, such that $a$ does not occur in $v_2$. To sum up, we have that 
		\[
		u = u_1 y a u_2 w \quad \text{and} \quad v = v_1 a v_2 y w,
		\]
		where $y$ occurs in $u_1$ but not in $u_2$ and $a$ does not occur in $v_2$.
		
		Suppose, in order to obtain a contradiction, that $a$ does not occur in $u_1$. This implies that $|u'|_y = o_{y \rightarrow a} (u)$. But $|u'|_y = |v'|_y + 1$, hence
		\[
		o_{y \rightarrow a} (v) \leq |v'|_y < |u'|_y = o_{y \rightarrow a} (u).
		\]
		Thus, by Theorem~\ref{theorem:baxt_identities}, we obtain a contradiction. As such, $a$ must occur in $u_1$. By the same theorem, $a$ must occur in $w$ as well, otherwise, we would have $o_{a \leftarrow y} (u) < o_{a \leftarrow y} (v)$. Therefore, $y$ and $a$ both occur at least once in $u_1$ and $w$. As such, we can deduce the word $u_1 a y u_2 w$, by applying the identity \eqref{idO} or the identity \eqref{idE}, depending on where $y$ and $a$ occur in $u_1$ and $w$, renaming $x$ to $a$ and replacing the remaining variables by the appropriate words.		
	\end{proof}
	
	An immediate consequence of having a finite basis is the following:
	
	\begin{corollary}
		The varieties $\mathbf{V}_{\sylv}$, $\mathbf{V}_{\sylvh}$ and $\mathbf{V}_{\baxt}$ have finite axiomatic rank.
	\end{corollary}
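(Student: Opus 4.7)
The plan is to obtain the corollary as an immediate consequence of the three finite-basis theorems just established. Recall that the axiomatic rank of a variety $\mathbf{V}$ is the least positive integer $n$ such that $\mathbf{V}$ admits a basis of identities each of which uses at most $n$ variables (or infinity if no such $n$ exists). Thus, to conclude finite axiomatic rank, it suffices to exhibit a basis in which only finitely many variables appear in total.

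For $\mathbf{V}_{\sylv}$, Theorem~\ref{theorem:sylv_finite_basis} provides the single-identity basis $\mathcal{B}_{\sylv} = \{xyzxty \approx yxzxty\}$, which uses exactly four variables; hence the axiomatic rank of $\mathbf{V}_{\sylv}$ is at most $4$, in particular finite. Similarly, Theorem~\ref{theorem:sylvh_finite_basis} yields the single-identity basis $\mathcal{B}_{\sylvh} = \{xzytxy \approx xzytyx\}$, again over four variables, so the axiomatic rank of $\mathbf{V}_{\sylvh}$ is at most $4$. Finally, Theorem~\ref{theorem:baxt_finite_basis} gives the two-identity basis $\mathcal{B}_{\baxt} = \{\mathrm{(O)}, \mathrm{(E)}\}$, where each identity uses exactly six variables and the total number of distinct variables appearing across the basis is six, so the axiomatic rank of $\mathbf{V}_{\baxt}$ is at most $6$.

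There is essentially no obstacle here: the work has all been done in the preceding subsection, and the corollary only packages the finiteness conclusion. One could, if desired, strengthen the statement to assert explicit upper bounds ($4$, $4$, and $6$ respectively), but as written the corollary requires nothing beyond invoking the three basis theorems and observing that each basis involves only finitely many variables.
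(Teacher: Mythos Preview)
Your proposal is correct and matches the paper's approach exactly: the paper simply prefaces the corollary with ``An immediate consequence of having a finite basis is the following,'' treating it as a direct consequence of Theorems~\ref{theorem:sylv_finite_basis}, \ref{theorem:sylvh_finite_basis}, and~\ref{theorem:baxt_finite_basis}. Your explicit extraction of the upper bounds $4$, $4$, and $6$ is a harmless elaboration of what the paper leaves implicit.
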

		
	By {\cite[Proposition~4.10]{cmr_hypo_id}} and {\cite[Proposition~4.11]{cmr_hypo_id}}, we know that the identities \eqref{idL} and \eqref{idR} are not consequences of the set of non-trivial identities, satisfied by $\hypo$, over an alphabet with four variables, excluding themselves and equivalent identities. Since the identities satisfied by $\sylv$ and $\sylvh$ must also be satisfied by $\hypo$, we can conclude the following:
	
	\begin{corollary}
        The identity \eqref{idL} is not a consequence of the set of non-trivial identities, satisfied by $\sylv$, over an alphabet with four variables, excluding \eqref{idL} itself and equivalent identities. Furthermore, any basis for $\mathbf{V}_{\sylv}$ with only identities over an alphabet with four variables must contain the identity \eqref{idL}, or an equivalent identity.
	\end{corollary}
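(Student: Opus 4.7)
The plan is to deduce both parts of the corollary by transferring the corresponding result for the hypoplactic monoid through the natural homomorphism $\sylv \to \hypo$, exploiting that identities of $\sylv$ form a subset of identities of $\hypo$ and that non-consequence passes from larger sets of assumptions to smaller ones.

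First I would recall the fact, already established earlier in the excerpt, that $\hypo$ lies in the variety generated by $\sylv$, so every identity satisfied by $\sylv$ is also satisfied by $\hypo$. Restricting to the alphabet of four variables, let $T_{\sylv}$ and $T_{\hypo}$ denote, respectively, the sets of non-trivial four-variable identities satisfied by $\sylv$ and $\hypo$; then $T_{\sylv} \subseteq T_{\hypo}$. Removing \eqref{idL} and all identities equivalent to it from both sides preserves the inclusion, giving $T_{\sylv}' \subseteq T_{\hypo}'$ for the two reduced sets.

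Next I would invoke the elementary monotonicity of equational deduction: if $\Sigma_1 \subseteq \Sigma_2$ then every consequence of $\Sigma_1$ is a consequence of $\Sigma_2$, so failure to be a consequence passes from $\Sigma_2$ to $\Sigma_1$. By \cite[Proposition~4.10]{cmr_hypo_id}, \eqref{idL} is not a consequence of $T_{\hypo}'$. Applying monotonicity with $\Sigma_1 = T_{\sylv}'$ and $\Sigma_2 = T_{\hypo}'$ yields that \eqref{idL} is not a consequence of $T_{\sylv}'$, which is the first assertion.

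For the second assertion I would argue by contradiction: suppose a basis $\mathcal{B}$ for $\mathbf{V}_{\sylv}$ consists only of identities over a four-variable alphabet and contains neither \eqref{idL} nor any equivalent identity. Then $\mathcal{B} \subseteq T_{\sylv}'$. Since $\sylv$ satisfies \eqref{idL} and $\mathcal{B}$ is a basis for $\mathbf{V}_{\sylv}$, the identity \eqref{idL} must be a consequence of $\mathcal{B}$, and hence of $T_{\sylv}'$, contradicting the first part. No step is really an obstacle here; the only care required is to verify that the notion of equivalent identity used in \cite{cmr_hypo_id} coincides with the one used in the present paper, which it does since both refer to renaming of variables and swapping sides.
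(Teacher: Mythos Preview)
Your argument is correct and follows exactly the route taken in the paper: the paper observes that the identities satisfied by $\sylv$ are also satisfied by $\hypo$, so the four-variable non-trivial identities of $\sylv$ (with \eqref{idL} and its equivalents removed) form a subset of the corresponding set for $\hypo$, and then invokes \cite[Proposition~4.10]{cmr_hypo_id} together with monotonicity of consequence. Your derivation of the second assertion by contradiction is likewise the intended conclusion, spelled out in slightly more detail than the paper does.
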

	
	\begin{corollary}
        The identity \eqref{idR} is not a consequence of the set of non-trivial identities, satisfied by $\sylvh$, over an alphabet with four variables, excluding \eqref{idR} itself and equivalent identities. Furthermore, any basis for $\mathbf{V}_{\sylvh}$ with only identities over an alphabet with four variables must contain the identity \eqref{idR}, or an equivalent identity.
	\end{corollary}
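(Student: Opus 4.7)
The plan is to obtain this corollary as a direct transfer of \cite[Proposition~4.11]{cmr_hypo_id} from $\hypo$ to $\sylvh$, mirroring exactly the argument sketched for $\sylv$ and \eqref{idL} in the preceding corollary. The key structural fact is already recorded in the excerpt: $\hypo$ is a homomorphic image of $\sylvh$, so $\hypo$ lies in the variety generated by $\sylvh$ and therefore satisfies every identity satisfied by $\sylvh$. In particular, the set of non-trivial four-variable identities satisfied by $\sylvh$ is contained in the set of non-trivial four-variable identities satisfied by $\hypo$.

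For the first assertion, I would argue by contradiction. Let $\Sigma_{\sylvh}$ denote the set of non-trivial identities satisfied by $\sylvh$ over an alphabet with four variables, excluding \eqref{idR} and its equivalents, and let $\Sigma_{\hypo}$ denote the analogous set for $\hypo$. By the variety containment above, $\Sigma_{\sylvh} \subseteq \Sigma_{\hypo}$. Suppose \eqref{idR} were a consequence of $\Sigma_{\sylvh}$. Then, since consequences are preserved under enlarging the hypothesis set, \eqref{idR} would also be a consequence of $\Sigma_{\hypo}$, directly contradicting \cite[Proposition~4.11]{cmr_hypo_id}.

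For the second assertion, suppose $\mathcal{B}$ is a basis for $\mathbf{V}_{\sylvh}$ consisting only of identities over an alphabet with at most four variables, and suppose, toward a contradiction, that $\mathcal{B}$ contains neither \eqref{idR} nor any identity equivalent to it. Every identity in $\mathcal{B}$ is satisfied by $\sylvh$ (since $\mathcal{B}$ is a basis for $\mathbf{V}_{\sylvh}$), so $\mathcal{B} \subseteq \Sigma_{\sylvh}$. On the other hand, because $\mathcal{B}$ is a basis and $\sylvh$ satisfies \eqref{idR} (by Theorem~\ref{theorem:sylvh_finite_basis}), the identity \eqref{idR} must be a consequence of $\mathcal{B}$, hence of $\Sigma_{\sylvh}$, contradicting the first assertion.

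The main obstacle in this argument is entirely outsourced: it lies in establishing the non-derivability statement for $\hypo$, which was carried out in \cite[Proposition~4.11]{cmr_hypo_id}. Once that result is taken as given, the passage to $\sylvh$ is essentially formal, depending only on the homomorphism $[u]_{\sylvh} \mapsto [u]_{\hypo}$ recorded after Proposition~\ref{proposition:left_precedence_characterization} and the monotonicity of consequence with respect to the hypothesis set; no further combinatorial analysis of left precedences or of the $\sylvh$-insertion is needed.
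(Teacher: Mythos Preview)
Your proposal is correct and follows essentially the same approach as the paper: the paper justifies this corollary in a single sentence preceding it, by invoking \cite[Proposition~4.11]{cmr_hypo_id} for $\hypo$ together with the fact that every identity satisfied by $\sylvh$ is also satisfied by $\hypo$, and your argument is precisely a fleshed-out version of that transfer.
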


	Hence, $\mathbf{V}_{\sylv}$ and $\mathbf{V}_{\sylvh}$ do not admit any bases with only identities over an alphabet with two or three variables. In other words, we have that:
	
	\begin{corollary}
        The axiomatic rank of $\mathbf{V}_{\sylv}$ and $\mathbf{V}_{\sylvh}$ is $4$.
	\end{corollary}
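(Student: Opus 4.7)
The plan is to establish the axiomatic rank of both varieties as equal to $4$ by combining an upper bound supplied by the finite bases already constructed with a lower bound coming directly from the two preceding non-derivability corollaries. For the upper bound, I would simply invoke Theorems~\ref{theorem:sylv_finite_basis} and \ref{theorem:sylvh_finite_basis}: the one-element bases $\mathcal{B}_{\sylv}=\{\text{\eqref{idL}}\}$ and $\mathcal{B}_{\sylvh}=\{\text{\eqref{idR}}\}$ each consist of a single identity whose support has cardinality exactly $4$, which forces the axiomatic ranks of $\mathbf{V}_{\sylv}$ and $\mathbf{V}_{\sylvh}$ to be at most $4$.

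For the lower bound, I would argue by contradiction. Suppose $\mathbf{V}_{\sylv}$ admitted a basis $\mathcal{B}$ consisting entirely of identities whose supports have size at most $3$. By trivially extending the variable alphabet (any identity on three letters is \emph{a fortiori} an identity on four letters, just not using one of them), $\mathcal{B}$ may be regarded as a set of identities over a four-letter alphabet, and it of course remains a basis for $\mathbf{V}_{\sylv}$. However, no member of $\mathcal{B}$ has support of size $4$, so $\mathcal{B}$ contains neither \eqref{idL} nor any identity equivalent to it (since renaming variables or swapping sides preserves the size of the support). This contradicts the preceding corollary, which asserts that every basis of $\mathbf{V}_{\sylv}$ using only four-variable identities must contain \eqref{idL} or an equivalent. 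The identical argument, using \eqref{idR} and the analogous corollary for $\sylvh$, rules out bases of $\mathbf{V}_{\sylvh}$ whose identities use at most three variables.

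Combining the matching upper and lower bounds yields the stated axiomatic rank of $4$ for both varieties. No significant obstacle is anticipated: all the technical heavy lifting has already been carried out, in the basis constructions of Theorems~\ref{theorem:sylv_finite_basis} and \ref{theorem:sylvh_finite_basis} and in the non-derivability corollaries inherited from the hypoplactic analysis of \cite{cmr_hypo_id}. The only subtlety is the reminder that the bounds to be ruled out are at most $3$ rather than exactly $3$, which is immediate from the alphabet-extension observation above.
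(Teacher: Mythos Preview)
Your proposal is correct and follows essentially the same approach as the paper: the upper bound comes from the single-identity bases of Theorems~\ref{theorem:sylv_finite_basis} and~\ref{theorem:sylvh_finite_basis}, and the lower bound follows from the two preceding corollaries ruling out any basis over at most three variables. The paper's argument is slightly terser (it simply states that the preceding corollaries imply no basis over two or three variables exists), but your explicit contradiction argument with the alphabet-extension and support-preservation observations spells out exactly the same reasoning.
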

	
	We now show that the identities \eqref{idO} and \eqref{idE} are required to be in any basis for $\mathbf{V}_{\baxt}$ which contains only identities over an alphabet with six variables:
		
	\begin{proposition}
	\label{proposition:idO_E_not_consequence}
        The identities \eqref{idO} and \eqref{idE} are not consequences of the set of non-trivial identities, satisfied by $\baxt$, over an alphabet with six variables, excluding themselves (but not the other) and equivalent identities.
	\end{proposition}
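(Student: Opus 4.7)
I prove the claim for identity \eqref{idO}; the argument for \eqref{idE} is symmetric. Set $u := xzytxyrxsy$ (the left side of \eqref{idO}) and $v := xzytyxrxsy$ (the right side), and let $\Sigma$ denote the set described in the statement. The strategy has two parts: (i) show the $\baxt$-class of $u$ has exactly two elements, namely $\{u, v\}$, so that any derivation of \eqref{idO} from $\Sigma$ must contain a \emph{direct} single-step rewrite $u \to v$; (ii) show the identity used in that step must coincide with \eqref{idO} up to renaming of variables, contradicting the hypothesis that \eqref{idO} and its equivalents lie outside $\Sigma$.

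For (i), I apply Corollary~\ref{corollary:baxter_precedence_characterization}: a direct enumeration of the left and right precedences of $u$ pins down the positions of the single-occurrence variables $z, t, r, s$, together with the outermost occurrences of $x$ and $y$, leaving only the order of an $x$ and a $y$ in positions $5$ and $6$; hence the class is $\{u, v\}$. Now fix a direct step $u = \alpha\, \varphi(p)\, \beta \to \alpha\, \varphi(q)\, \beta = v$ realized by some $p \approx q \in \Sigma$ under a substitution $\varphi$. By Theorem~\ref{theorem:baxt_identities} the induced identity $\varphi(p) \approx \varphi(q)$ is satisfied by $\baxt$; for $o_{x \rightarrow y}$, $o_{y \rightarrow x}$, $o_{x \leftarrow y}$, and $o_{y \leftarrow x}$ to be preserved despite the swap at positions $5$ and $6$, both $x$ and $y$ must occur in the portions of $\varphi(p)$ on each side of the swap. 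Within $u$, the only suffix of $xzyt$ containing both $x$ and $y$ is $xzyt$ itself, and the only prefix of $rxsy$ containing both $x$ and $y$ is $rxsy$ itself. It follows that $\varphi(p) = u$ and $\varphi(q) = v$.

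For (ii), the key combinatorial observation is that $u$ has no repeated contiguous subword of length $\geq 2$ (direct inspection of its nine $2$-letter factors); hence any variable of $p$ whose image under $\varphi$ has length $\geq 2$ must occur exactly once in $p$. Combined with Corollary~\ref{corollary:baxt_shortest_identity_n_variables}, which gives $|p| \geq n + 4$ for a non-trivial $n$-variable identity satisfied by $\baxt$, and the requirement that the letters appearing in the images of $\varphi$ collectively cover the six distinct letters of $u$, a short case analysis on $n \in \{1, \ldots, 6\}$ eliminates every $n < 6$: the coverage constraint forces disjoint letter sets for the short-image and long-image variables, and in each sub-case either not all six letters of $u$ can be covered by the available images, or the multiplicities of the short-image variables' images in $u$ cannot sum to $|p|$ minus the number of long-image variables. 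Hence $n = 6$, each $\varphi(v_i)$ is a single letter, $\varphi$ is a bijection onto $\{x, y, z, t, r, s\}$, and $p \approx q$ coincides with \eqref{idO} up to a renaming of variables; this contradicts $p \approx q \in \Sigma$. The main obstacle is the case analysis in (ii), but it is kept manageable by the no-repeated-subword observation, which drastically restricts the possible substitution patterns.
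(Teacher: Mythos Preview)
Your proposal is correct and follows essentially the same route as the paper, though organized somewhat differently. A few points of comparison: your part (i), showing explicitly that the $\baxt$-identity class of $xzytxyrxsy$ is $\{u,v\}$, is a clarification the paper leaves implicit (the paper simply picks ``the first non-trivial step'' of a derivation without saying why the source of that step may be taken to be $u$ itself). For part (ii), you reach $\alpha=\beta=\varepsilon$ by applying Theorem~\ref{theorem:baxt_identities} to the induced identity $\varphi(p)\approx\varphi(q)$ and reading off the $o_{x\to y}$, $o_{y\to x}$, $o_{x\leftarrow y}$, $o_{y\leftarrow x}$ constraints across the swap, whereas the paper reaches $w_1=w_2=\varepsilon$ by first bounding how often each variable of $p$ can occur (using that only $x$ and $y$ occur thrice in $u$). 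Your ``no repeated length-$2$ factor'' observation and the paper's ``every length-$2$ factor of $u$ contains $x$ or $y$'' are two sides of the same coin, and both collapse the case analysis to $n=6$ in the same way.

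Two small points to tighten: you invoke Corollary~\ref{corollary:baxter_precedence_characterization} in part (i), but precedences there are defined relative to the order on $\aset$; what you really want is Theorem~\ref{theorem:baxt_identities} (or Corollary~\ref{corollary:baxt_first_occurrence}), which characterizes the identity class directly in terms of the $o$-values over an unordered variable alphabet. Also, you should note (as the paper does, by reference to \cite[Proposition~4.10]{cmr_hypo_id}) that one may assume $\varphi$ sends no variable to the empty word---otherwise your ``no repeated factor $\Rightarrow$ long-image variables occur once'' step does not immediately apply, since two occurrences of a variable in $p$ could have adjacent images.
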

	
	\begin{proof}
	    We prove the result for the identity \eqref{idO}. Parallel reasoning shows the analogous result for \eqref{idE}.
	    
        Let $X := \{ x,y,z,t,r,s \}$ and let $\mathcal{S}$ be the set of all non-trivial identities, satisfied by $\baxt$, over an alphabet with six variables, excluding \eqref{idO} and equivalent identities. Suppose, in order to obtain a contradiction, that \eqref{idO} is a consequence of $\mathcal{S}$. As such, there must exist a non-trivial identity $u \approx v$ in $\mathcal{S}$, and a substitution $\sigma$, such that
        \[
            xzyt \; xy \; rxsy = w_1 \sigma(u) w_2,
		\]
		where $w_1, w_2$ are words over $X$, and $\sigma(u) \neq \sigma(v)$. Notice that $u \approx v$ must be balanced, and that there must be at least two variables occurring in $u$ and $v$, otherwise, $u \approx v$ would be a trivial identity. 
				
        By the same reasoning as in the proof of \cite[Proposition~4.10]{cmr_hypo_id}, we can assume, without loss of generality, that $\sigma$ does not map any variable to the empty word. Due to this, and since only $x$ and $y$ occur three times in $xzyt \; xy \; rxsy$, and all other variables each occur one time, we have that each variable occurring in $u \approx v$ can occur at most three times, and only two variables can occur more than one time. Furthermore, by Corollary \ref{corollary:baxt_shortest_identity_n_variables}, which gives us a lower bound for the length of the identities, we have that $u \approx v$ is of length at least $6$. Notice that it is exactly of length $6$ if only two variables occur in it. Thus, up to renaming of variables, $x$ and $y$ occur exactly thrice in $u \approx v$, and $t$, $z$, $r$ and $s$ can occur at most one time.
                
        Suppose now, in order to obtain a contradiction, that $w_1 \neq \varepsilon$. Then, since $u \approx v$ is of length at least $6$, we must have $w_1$ of length at most $4$, that is, $w_1$ is either $x$, $xz$, $xzy$ or $xzyt$. Therefore, $x$ can occur only twice in $\sigma(u)$. But $x$ and $y$ occur thrice in $u$, and $\sigma$ does not map any variable to the empty word, hence, there must be at least two variables which occur thrice in $\sigma(u)$. However, only $x$ and $y$ occur thrice in $xzyt \; xy \; rxsy$. We have reached a contradiction, hence, $w_1 = \varepsilon$. Using a similar argument, we can also conclude that $w_2 = \varepsilon$. Therefore, we have that
        \[
            xzyt \; xy \; rxsy = \sigma(u).
		\]

        As such, we can immediately conclude that only up to five variables occur in $u \approx v$: If $u \approx v$ were to be a six-variable identity, then it would be of length $10$, and $\sigma$ would be simply renaming the variables, thus implying that $u \approx v$ was equivalent to \eqref{idO}, which contradicts our hypothesis.    
        
        Notice that, regardless of the number of variables occurring in $u \approx v$, we have that both $\sigma(x)$ and $\sigma(y)$ are a single variable, otherwise, more than two variables would have to occur three times in $xzyt \; xy \; rxsy$, or one variable would have to occur six times. Furthermore, $\sigma(x)$ and $\sigma(y)$ can only be $x$ or $y$, since these are the only variables occurring thrice in $xzyt \; xy \; rxsy$. Hence, if $u \approx v$ is an identity where up to five variables occur, then $u \approx v$ cannot be a two-variable identity, and, furthermore, there is at least one variable $z$ occurring in $u \approx v$ such that $\sigma(z)$ is of length at least $2$, and neither $x$ nor $y$ can occur in $\sigma(z)$. This is impossible, since $x$ or $y$ occur in every factor of $xzyt \; xy \; rxsy$ of length $2$.
        
        As such, we can conclude that \eqref{idO} is not a consequence of the set of non-trivial identities, satisfied by $\baxt$, over an alphabet with six variables, excluding \eqref{idO} itself and equivalent identities.
	\end{proof}
	    
	Therefore, we can conclude that $\mathbf{V}_{\baxt}$ does not admit any basis with only identities over an alphabet with up to five variables. In other words, we have that:
	
	\begin{corollary}
        The axiomatic rank of $\mathbf{V}_{\baxt}$ is $6$. Furthermore, any basis for $\mathbf{V}_{\baxt}$ with only identities over an alphabet with six variables must contain the identities \eqref{idO} and \eqref{idE}, or equivalent identities.
	\end{corollary}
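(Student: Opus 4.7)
The plan is to obtain both assertions as immediate consequences of Theorem~\ref{theorem:baxt_finite_basis} together with the preceding Proposition~\ref{proposition:idO_E_not_consequence}; essentially no new combinatorial work is required, and I do not anticipate a genuine obstacle beyond the bookkeeping of what ``consequence'' and ``equivalent'' mean in this setting.

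For the axiomatic rank, Theorem~\ref{theorem:baxt_finite_basis} already exhibits a finite basis consisting of the two identities \eqref{idO} and \eqref{idE}, each on exactly six variables, which immediately yields the upper bound of $6$. For the matching lower bound I will argue by contradiction: suppose some basis $\mathcal{B}$ for $\mathbf{V}_{\baxt}$ consists entirely of identities involving at most five variables each. Each such identity is, in particular, a non-trivial identity satisfied by $\baxt$ over an alphabet with six variables (just adjoin an unused variable), and none of them is equivalent to \eqref{idO}, since \eqref{idO} uses six distinct variables whereas every identity of $\mathcal{B}$ uses at most five. Yet \eqref{idO} would have to be a consequence of $\mathcal{B}$, directly contradicting Proposition~\ref{proposition:idO_E_not_consequence}. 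Hence the axiomatic rank equals $6$.

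For the ``must contain'' statement, suppose $\mathcal{B}'$ is any basis for $\mathbf{V}_{\baxt}$ whose identities are all over an alphabet with six variables, and assume toward a contradiction that no identity of $\mathcal{B}'$ is equivalent to \eqref{idO}. Then \eqref{idO} would be a consequence of a set of non-trivial identities, satisfied by $\baxt$, over six variables, excluding \eqref{idO} and its equivalents --- precisely the situation ruled out by Proposition~\ref{proposition:idO_E_not_consequence}. The identical argument with \eqref{idE} in place of \eqref{idO} handles the other identity, using that Proposition~\ref{proposition:idO_E_not_consequence} explicitly excludes an identity only from itself and its equivalents, but not from the other. The only point worth checking is the trivial observation that an identity over $k \leq 6$ variables is still an identity over six variables (after renaming or padding), which legitimises the application of the proposition.
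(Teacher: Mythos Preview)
Your proposal is correct and follows exactly the route the paper intends: the corollary is stated immediately after Proposition~\ref{proposition:idO_E_not_consequence} as a direct consequence, and your argument simply makes explicit the bookkeeping that the paper leaves to the reader. The upper bound from Theorem~\ref{theorem:baxt_finite_basis} and the lower bound via contradiction with Proposition~\ref{proposition:idO_E_not_consequence} are precisely the intended reasoning.
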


	\bibliographystyle{alpha}
	\bibliography{sylv_baxt}

\end{document}